\title
[Microlocal analysis for Gelfand--Shilov spaces]
{Microlocal analysis for Gelfand--Shilov spaces}
\author[L. Rodino]{Luigi Rodino}
\address{Department of Mathematics, Universit\`a di Torino, Via Carlo Alberto 10,
10123 Torino, Italy}
\email{luigi.rodino[AT]unito.it}
\author[P. Wahlberg]{Patrik Wahlberg}
\address{Dipartimento di Scienze Matematiche, Politecnico di Torino, Corso Duca degli Abruzzi 24,
10129 Torino, Italy}
\email{patrik.wahlberg[AT]polito.it}
\numberwithin{equation}{section}          
\newtheorem{thm}{Theorem}
\numberwithin{thm}{section}
\newcommand{\rubrik}{}
\newtheorem{prop}[thm]{Proposition}
\newtheorem{cor}[thm]{Corollary}
\newtheorem{lem}[thm]{Lemma}
\theoremstyle{definition}
\newtheorem{defn}[thm]{Definition}
\theoremstyle{remark}
\newtheorem{rem}[thm]{Remark}              
\newcommand{\Ker}{\operatorname{Ker}}
\newcommand{\scal}[2]{\langle #1,#2\rangle}
\newcommand{\pd}[1] {\partial ^#1}
\newcommand{\pdd}[2] {\partial_{#1} ^{#2}}
\newcommand{\ro}{\mathbf R}
\newcommand{\no}{\mathbf N}
\newcommand{\rr}[1]{\mathbf R^{#1}}
\newcommand{\sr}[1]{\mathbf S^{#1}}
\newcommand{\sro}[1]{\mathbf S}
\newcommand{\nn}[1]{\mathbf N^{#1}}
\newcommand{\zo}{\mathbf Z}
\newcommand{\co}{\mathbf C}
\newcommand{\cc}[1]{\mathbf C^{#1}}
\newcommand{\dd}{\mathrm {d}}
\newcommand{\nm}[2]{\Vert #1\Vert _{#2}}
\newcommand{\sgn}{\operatorname{sgn}}
\newcommand{\ep}{\varepsilon}
\newcommand{\fy}{\varphi}
\newcommand{\cdo}{\, \cdot \, }
\newcommand{\supp}{\operatorname{supp}}
\newcommand{\eabs}[1]{\langle #1\rangle}
\newcommand{\Sp}{\operatorname{Sp}}
\newcommand{\GL}{\operatorname{GL}}
\newcommand{\rB}{\operatorname{B}}
\newcommand{\WF}{\mathrm{WF}}
\newcommand{\WFg}{\mathrm{WF_g}}
\newcommand{\cS}{\mathscr{S}}
\newcommand{\cE}{\mathscr{E}}
\newcommand{\cD}{\mathscr{D}}
\newcommand{\cF}{\mathscr{F}}
\newcommand{\J}{\mathcal{J}}
\newcommand{\wh}{\widehat}
\newcommand{\re}{{\rm Re}}
\newcommand{\im}{{\rm Im}}
\def\la{\langle}
\def\ra{\rangle}
\newcommand{\leqs}{\leqslant}
\newcommand{\geqs}{\geqslant}
\begin{document}

\begin{abstract}
We introduce an anisotropic global wave front set of Gelfand--Shilov ultradistributions with different indices
for regularity and decay at infinity. 
The concept is defined by the lack of super-exponential decay along power type curves in the phase space of 
the short-time Fourier transform. 
This wave front set captures the phase space behaviour of oscillations of power monomial type, 
a k a chirp signals.
A microlocal result is proved with respect to pseudodifferential operators 
with symbol classes that give rise to continuous operators on Gelfand--Shilov spaces. 
We determine the wave front set of certain series of derivatives of the Dirac delta, 
and exponential functions. 
\end{abstract}

\keywords{Ultradistributions, Gelfand--Shilov spaces, pseudodifferential operators, wave front sets, microlocal analysis, phase space, anisotropy}
\subjclass[2010]{46F05, 46F12, 35A27, 47G30, 35S05, 35A18, 81S30, 58J47}

\maketitle

\section{Introduction}\label{sec:intro}

Gelfand--Shilov spaces, for $t > 0$ and $s > 0$, are defined by
\begin{equation}\label{eq:GelfandShilovest}
|x^\alpha D ^\beta f(x)|
\leqs C h^{|\alpha + \beta |} \alpha !^t \, \beta !^s
\end{equation}
which we assume to be valid for every $h > 0$ and a suitable $C > 0$ depending on $h$
(spaces of Beurling type $\Sigma_t^s(\rr d)$), 
or else for some $h > 0$ and some $C > 0$ (Roumieu type $\mathcal S_t^s(\rr d)$). 
The ultradistributions $(\Sigma_t^s)'(\rr d)$, $(\mathcal S_t^s)'(\rr d)$ are defined as their respective topological duals. 
Attention in our paper will be limited to the Beurling case 
under the assumption $t + s > 1$ granting $\Sigma_t^s(\rr d) \neq \{ 0\}$. 
The definition was introduced in \cite{Gelfand2}, and then analyzed in various contexts, 
with application to linear and nonlinear partial differential equations, in connection also
with problems in Mathematical Physics. 
The literature on the subject is extremely wide, see for example \cite{Pilipovic1,Debrouwere1,Teofanov1}
for recent contributions to the general theory, and \cite{Cappiello0a,Carypis1,Morimoto1,Morimoto2}
concerning travelling waves, Boltzmann and Schr\"odinger equations. 
In particular, Gelfand--Shilov spaces have been considered in the framework of 
pseudodifferential operators. 
Namely, classes of pseudodifferential operators were introduced, 
with symbols satisfying suitable factorial and exponential estimates, 
acting continuously on Gelfand--Shilov spaces,  
see for example \cite{Abdeljawad1,Cappiello2}. 

In our paper we shall refer to the class of symbols satisfying 
\begin{equation}\label{eq:symbolest}
|\pdd  x \alpha  \pdd \xi\beta a(x,\xi)|
\leqs C h^{|\alpha + \beta |} \alpha !^s \, \beta !^t
e^{\mu \left( |x|^{\frac1t} + |\xi|^{\frac1s} \right)}
\end{equation}
for some $\mu > 0$ and all $h > 0$, with $C > 0$ depending on $h$. 
This symbol class was introduced in \cite{Abdeljawad1}. 
The corresponding Weyl operators $a^w(x,D)$ were proved to act continuously on $\Sigma_t^s(\rr d)$
and on $(\Sigma_t^s)'(\rr d)$
in \cite[Theorem~3.15]{Abdeljawad1}. 

Our attention will be actually addressed to another ingredient of the microlocal analysis: the wave front set. 
The classical definition of H\"ormander \cite{Hormander0} in the setting of Schwartz distributions
was extended in different ways. 
In particular H\"ormander \cite{Hormander1} introduced for $u \in \cS'(\rr d)$ the notion of $\WFg(u)$
adapted to the study of global regularity in $T^* \rr d \setminus 0$. 
Let us recall the definition by using the short-time Fourier transform (Gabor transform) 
with window $\fy \in \cS(\rr d) \setminus 0$, cf. \cite{Rodino2}: 
\begin{equation*}
V_\fy u (x,\xi) = (2\pi )^{-\frac d2} \int_{\rr d} e^{- i \scal y \xi} u(y) \overline{\fy(y-x)} \dd y. 
\end{equation*}
We have $z_0 = (x_0,\xi_0) \notin \WFg(u)$, $z_0 \neq 0$, if
\begin{equation}\label{eq:conicdecay1}
\sup_{z \in \Gamma} \eabs{z}^N |V_\fy u (z)| < \infty \quad \forall N \geqs 0 
\end{equation}
for a suitable conic neighborhood $\Gamma$ of $z_0$ in $\rr {2d} \setminus 0$. 

Looking for a counterpart of \eqref{eq:conicdecay1} in the Gelfand--Shilov setting, 
we may start with the equivalent definition of the $\Sigma_t^s(\rr d)$ regularity 
of $u \in (\Sigma_t^s)'(\rr d)$ given by the estimates, with window $\fy \in \Sigma_t^s(\rr d) \setminus 0$, 
\begin{equation}\label{eq:STFTGFstequiv}
| V_\fy u (x,\xi)| \lesssim e^{-r (|x|^{\frac1t} + |\xi|^{\frac1s})} \quad \forall r > 0. 
\end{equation}
For the equivalence with \eqref{eq:GelfandShilovest} see for example \cite{Toft1}. 

Hence in the case $s = t$ we may define as $\Sigma_s^s(\rr d)$ regularity at $z_0 \in T^* \rr d \setminus 0$
\begin{equation}\label{eq:conicdecay2}
\sup_{z \in \Gamma} e^{r |z|^{\frac1s}} |V_\fy u (z)| < \infty \quad \forall r > 0 
\end{equation}
where again $\Gamma$ is a conic neighborhood of $z_0$ in $\rr {2d} \setminus 0$. 
Based on \eqref{eq:conicdecay2}, the Gelfand--Shilov wave front set for $s = t$ was recently defined and used
in applications to partial differential equations \cite{Boiti1,Cappiello1,Carypis1}. 
Let us address for some early ideas to \cite{Hormander1}, and to the theory of Fourier hyperfunctions
\cite{Kaneko1,Kaneko2}. 

If $s \neq t$, cones $\Gamma \subseteq T^* \rr d \setminus 0$ are not anymore appropriate to micro-localize the decay of the Gabor transform 
in \eqref{eq:STFTGFstequiv}. 
The natural idea is to replace the standard cones with anisotropic cones, 
namely we replace the straight lines through $(x_0,\xi_0) \in T^* \rr d \setminus 0$ 
with the curves $\{x = \lambda^t x_0, \ \xi = \lambda^s \xi_0, \ \lambda > 0 \}$
and we define the anisotropic cone as the union of such curves through a neighborhood $U \subseteq T^* \rr d \setminus 0$ of $(x_0,\xi_0)$. 
The required decay to define $(x_0,\xi_0) \notin \WF^{t,s} (u)$ can then be expressed by 
\begin{equation*}
\sup_{\lambda > 0, \ (x,\xi) \in U} e^{r \lambda} |V_\fy u(\lambda^t x, \lambda^s \xi)| < \infty, \quad \forall r > 0. 
\end{equation*}

Let us describe in short the contents of the paper. 
Section \ref{sec:prelim} is devoted to some preliminaries. 
We give in particular a new proof of the celebrated Peetre inequality;
the optimality of the constant in our formula seems new in the literature, surprisingly. 
The definition of $\WF^{t,s} (u)$ is reported in Section \ref{sec:GelfandShilovWF}.
We give there examples about $\WF^{s,s} (u)$, i.e. the case $s=t$, 
and then prove invariance properties under change of window and the action 
of certain metaplectic operators. 

Section \ref{sec:chirp} is devoted to chirp signals, providing an interesting example
of anisotropic wave front set. 
Namely in dimension $d=1$, for 
\begin{equation}\label{eq:chirp1}
u (x) = e^{i c x^{m}}, \quad m \in \no \setminus \{ 0,1 \}, \quad c \in \ro \setminus 0, 
\end{equation}
we obtain if $t (m-1) > 1$
\begin{equation}\label{eq:WFchirp1}
\WF^{t, t (m-1)}(u) = \{ (x, \xi = c m x^{m-1} ) \in \rr 2, \ x \neq 0 \}.  
\end{equation}

Section \ref{sec:GSGevrey} is addressed to the relations between the Gelfand--Shilov wave front set
and the Gevrey wave front set $\WF_s (u)$ for $u \in (\Sigma_t^s)'(\rr d)$, $s > 1$. 
We shall refer to \cite{Rodino1}, results given there for the Roumieu case being easily translated
to the present Beurling framework. 

The main result of the paper is in Section \ref{sec:microlocal}, where we prove the microlocal inclusion
\begin{equation}\label{eq:microlocal0}
\WF^{t,s}( a^w(x,D) u ) \subseteq \WF^{t,s}(u), \quad u \in (\Sigma_t^s)'(\rr d), 
\end{equation}
for symbols satisfying \eqref{eq:symbolest}. 
Several examples are then given. 
Namely in Section \ref{sec:polynomials} we compute $\WF^{t,s} (u)$ for polynomials
and finite linear combinations of derivatives of the delta distribution $\delta_0$. 
The analysis extends to ultradistributions of the form
\begin{equation*}
u = \sum_{\alpha \in \nn d} c_\alpha D^\alpha \delta_0
\end{equation*}
under suitable bounds on the coefficients $c_\alpha \in \co$, and their Fourier transforms. 

In Section \ref{sec:exponential} we first consider $e^{\la \cdot, z \ra} \in (\Sigma_t^s)'(\rr d)$,
with $z \in \cc d$ fixed, $t \leqs 1$. 
From \eqref{eq:microlocal0} we obtain
\begin{equation*}
\WF^{t,s} ( e^{\la \cdot, z \ra}  ) = (\rr d \setminus 0) \times \{ 0 \}. 
\end{equation*}
Combining with the example \eqref{eq:chirp1} in dimension $d = 1$, we then consider
\begin{equation*}
u(x) = e^{z x + i c x^{m}}
\end{equation*}
and we deduce for $\WF^{t,s} (u)$ the same identity \eqref{eq:WFchirp1}. 

In conclusion, we would like to observe that anisotropic cones are not a novelty in microlocal analysis. 
They were used as a partition of the space $\rr d$ of the dual variables by \cite{Lascar1} and \cite{Parenti1}, 
soon followed by other authors, see for more recent contributions \cite{Garello1} and its references.  
In these papers the anisotropic cones in $\rr d$ are used as a suitable option in the microlocal study
of equations of parabolic type, whereas in our case the anisotropy in $T^* \rr d$ is forced by the very structure of the function spaces. 

As background we mention recent works of ours (written after this paper) concerning anisotropic global wave front sets and their propagation for certain evolution equations \cite{Rodino3,Wahlberg4,Wahlberg5}. 

Applications to partial differential equations will be given in a sequel of this paper. 
We are then inspired by \cite{Cappiello0b}, where the authors prove Gelfand--Shilov regularity for operators of the type
\begin{equation*}
P = - \Delta + |x|^{2m}, \quad m \in \no \setminus 0. 
\end{equation*}
We aim for microlocal versions of this result, as well as 
propagation of singularities for Schr\"odinger operators of the form
\begin{equation*}
Q = i \partial_t - P = i \partial_t + \Delta - |x|^{2m}. 
\end{equation*}
%

\section{Preliminaries}\label{sec:prelim}

An open ball in $\rr d$ of radius $r > 0$ centered at $x \in \rr d$ is denoted $\rB_r(x)$, and $\rB_r(0) = \rB_r$. 
The unit sphere is denoted $\sr {d-1} \subseteq \rr d$. 
The group of invertible matrices in $\rr {d \times d}$ is $\GL(d,\ro)$, 
and the determinant of $A \in \rr {d \times d}$ is $|A|$. 
The transpose of $A \in \rr {d \times d}$ is denoted $A^T$ and the inverse transpose 
of $A \in \GL(d,\ro)$ is $A^{-T}$. 
The derivative $D_j = - i \partial_j$ is used extended to multi-indices. 
We write $f (x) \lesssim g (x)$ provided there exists $C>0$ such that $f (x) \leqs C \, g(x)$ for all $x$ in the domain of $f$ and of $g$. 
We use the bracket $\eabs{x} = (1 + |x|^2)^{\frac12}$ for $x \in \rr d$. 
Peetre's inequality is usually stated as
\begin{equation*}
\eabs{x+y}^s \leqs 2^{\frac{|s|}{2}} \eabs{x}^s\eabs{y}^{|s|}\qquad x,y \in \rr d,
\qquad s \in \ro, 
\end{equation*}
but in fact the constant can be improved as follows. 

\begin{lem}\label{Peetresharp}
We have 
\begin{equation*}
\eabs{x+y}^s \leqs \left( \frac{2}{\sqrt{3}} \right)^{|s|} \eabs{x}^s\eabs{y}^{|s|}\qquad x,y \in \rr d, \quad s \in \ro, 
\end{equation*}
where the constant is optimal. 
\end{lem}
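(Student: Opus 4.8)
The plan is to reduce the whole statement to the exponent $s=1$ together with one elementary scalar inequality. For $s \geqs 0$ all three quantities $\eabs{x+y}$, $\eabs{x}$, $\eabs{y}$ are positive, so raising to the power $s$ shows that the assertion is equivalent to
\begin{equation*}
\eabs{x+y} \leqs \frac{2}{\sqrt 3}\, \eabs{x}\, \eabs{y}, \qquad x,y \in \rr d.
\end{equation*}
I would then square both sides and use Cauchy--Schwarz in the form $|x+y|^2 \leqs (|x|+|y|)^2$ to pass to the scalar statement: writing $a = |x| \geqs 0$ and $b = |y| \geqs 0$, it suffices to prove
\begin{equation*}
1 + (a+b)^2 \leqs \frac{4}{3}\,(1+a^2)(1+b^2).
\end{equation*}
The crux is the sum-of-squares identity
\begin{equation*}
\frac{4}{3}\,(1+a^2)(1+b^2) - \bigl(1+(a+b)^2\bigr) = \frac{1}{3}\Bigl[(a-b)^2 + (1-2ab)^2\Bigr] \geqs 0,
\end{equation*}
which I would verify by direct expansion; its non-negativity gives the inequality at once.

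For optimality I would read off from this identity that the two squares vanish simultaneously precisely when $a = b$ and $2ab = 1$, i.e. when $|x| = |y| = 1/\sqrt 2$. Choosing $x = y$ to be any vector of this length (so that the Cauchy--Schwarz step is also an equality) yields $\eabs{x+y} = \sqrt 3$ and $\eabs{x}\,\eabs{y} = 3/2$, whence $\eabs{x+y} = \tfrac{2}{\sqrt 3}\,\eabs{x}\,\eabs{y}$. This exhibits equality and shows that the constant $2/\sqrt 3$ cannot be lowered.

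It remains to treat $s < 0$, which I would deduce from the case just proved rather than rerun the computation. Setting $s = -|s|$ and rearranging, the claim is equivalent to $\eabs{x} \leqs \tfrac{2}{\sqrt 3}\,\eabs{x+y}\,\eabs{y}$ after raising to the power $|s|$; applying the already established $s=1$ bound to the pair $x+y$ and $-y$ gives $\eabs{x} = \eabs{(x+y)+(-y)} \leqs \tfrac{2}{\sqrt 3}\,\eabs{x+y}\,\eabs{-y} = \tfrac{2}{\sqrt 3}\,\eabs{x+y}\,\eabs{y}$, as required, and optimality transfers under the same substitution. The only real obstacle is spotting the sum-of-squares decomposition above; once it is in hand, every remaining step is routine bookkeeping.
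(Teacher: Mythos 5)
Your proof is correct and follows essentially the same route as the paper: both reduce the claim to the quadratic inequality $3(1+|x+y|^2)\leqs 4(1+|x|^2)(1+|y|^2)$ via Cauchy--Schwarz, both rest on the nonnegativity of $(2|x|\,|y|-1)^2$ (your sum-of-squares $\tfrac13[(a-b)^2+(1-2ab)^2]$ is a scalar repackaging of the paper's rearrangement), and both exhibit optimality at $x=y$ with $|x|=2^{-1/2}$. Your explicit treatment of the case $s<0$ is a welcome detail that the paper leaves implicit.
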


\begin{proof}
It suffices to show 
\begin{equation*}
\sup_{x,y \in \rr d} \frac{1 + |x + y|^2}{ (1 + |x|^2) (1 + |y|^2) } = \frac43.
\end{equation*}

If $|x| = 2^{- \frac12}$ and $y = x$ then 
\begin{equation*}
\frac{1 + |x + y|^2}{ (1 + |x|^2) (1 + |y|^2) }  = \frac{1 + 4 |x|^2}{ (1 + |x|^2)^2} = \frac43
\end{equation*}
so it remains to show
\begin{equation*}
3 (1 + |x + y|^2 ) \leqs 4 (1 + |x|^2) (1 + |y|^2), \quad x, y \in \rr d. 
\end{equation*}
The latter inequality can be written
\begin{equation*}
4 \la x, y \ra \leqs 1 + | x - y |^2 + 4 |x|^2 |y|^2
\end{equation*}
whose truth is a consequence of $(2 |x| |y| - 1)^2 \geqs 0$ and the Cauchy--Schwarz inequality. 
\end{proof}

The normalization of the Fourier transform is
\begin{equation*}
 \cF f (\xi )= \widehat f(\xi ) = (2\pi )^{-\frac d2} \int _{\rr
{d}} f(x)e^{-i\scal  x\xi }\, dx, \qquad \xi \in \rr d, 
\end{equation*}
for $f\in \cS(\rr d)$ (the Schwartz space), where $\scal \cdo \cdo$ denotes the scalar product on $\rr d$. 
The conjugate linear action of a (ultra-)distribution $u$ on a test function $\phi$ is written $(u,\phi)$, consistent with the $L^2$ inner product $(\cdo ,\cdo ) = (\cdo ,\cdo )_{L^2}$ which is conjugate linear in the second argument. 

Denote translation by $T_x f(y) = f( y-x )$ and modulation by $M_\xi f(y) = e^{i \scal y \xi} f(y)$ 
for $x,y,\xi \in \rr d$ where $f$ is a function or distribution defined on $\rr d$. 
The composition is denoted $\Pi(x,\xi) = M_\xi T_x$. 
Let $\fy \in \cS(\rr d) \setminus \{0\}$. 
The short-time Fourier transform (STFT) \cite{Cordero1} of a tempered distribution $u \in \cS'(\rr d)$ is defined by 
\begin{equation*}
V_\fy u (x,\xi) = (2\pi )^{-\frac d2} (u, M_\xi T_x \fy) = \cF (u T_x \overline \fy)(\xi), \quad x,\xi \in \rr d. 
\end{equation*}
Then $V_\fy u$ is smooth and polynomially bounded \cite[Theorem~11.2.3]{Grochenig1}. 
When $u \in \cS(\rr d)$ it is instead superpolynomially decreasing, that is 
\begin{equation*}
|V_\fy u (x,\xi)| \lesssim \eabs{(x,\xi)}^{-N}, \quad (x,\xi) \in T^* \rr d, \quad \forall N \geqs 0.  
\end{equation*}
The inverse transform is given by
\begin{equation}\label{eq:STFTinverse}
u = (2\pi )^{-\frac d2} \iint_{\rr {2d}} V_\fy u (x,\xi) M_\xi T_x \fy \, \dd x \, \dd \xi
\end{equation}
provided $\| \fy \|_{L^2} = 1$, with action under the integral understood, that is 
\begin{equation}\label{eq:moyal}
(u, f) = (V_\fy u, V_\fy f)_{L^2(\rr {2d})}
\end{equation}
for $u \in \cS'(\rr d)$ and $f \in \cS(\rr d)$, cf. \cite[Theorem~11.2.5]{Grochenig1}.

\subsection{Spaces of functions and ultradistributions}

Let $s,t, h > 0$. 
The space denoted $\mathcal S_{t,h}^s(\rr d)$
is the set of all $f\in C^\infty (\rr d)$ such that
\begin{equation}\label{eq:seminormSigmas}
\nm f{\mathcal S_{t,h}^s}\equiv \sup \frac {|x^\alpha D ^\beta
f(x)|}{h^{|\alpha + \beta |} \alpha !^t \, \beta !^s}
\end{equation}
is finite, where the supremum is taken over all $\alpha ,\beta \in
\mathbf N^d$ and $x\in \rr d$.
The function space $\mathcal S_{t,h}^s$ is a Banach space which increases
with $h$, $s$ and $t$, and $\mathcal S_{t,h}^s \subseteq \cS$.
The topological dual $(\mathcal S_{t,h}^s)'(\rr d)$ is
a Banach space such that $\cS'(\rr d) \subseteq (\mathcal S_{t,h}^s)'(\rr d)$.

The Beurling type \emph{Gelfand--Shilov space}
$\Sigma _t^s(\rr d)$ is the projective limit 
of $\mathcal S_{t,h}^s(\rr d)$ with respect to $h$ \cite{Gelfand2}. This means
\begin{equation}\label{GSspacecond1}
\Sigma _t^s(\rr d) = \bigcap _{h>0} \mathcal S_{t,h}^s(\rr d)
\end{equation}
and the Fr{\'e}chet space topology of $\Sigma _t^s (\rr d)$ is defined by the seminorms $\nm \cdot{\mathcal S_{t,h}^s}$ for $h>0$.

If $s + t > 1$ then $\Sigma _t^s(\rr d)\neq \{ 0\}$ \cite{Petersson1}. 
The topological dual of $\Sigma _t^s(\rr d)$ is the space of (Beurling type) \emph{Gelfand--Shilov ultradistributions} \cite[Section~I.4.3]{Gelfand2}
\begin{equation}\tag*{(\ref{GSspacecond1})$'$}
(\Sigma _t^s)'(\rr d) =\bigcup _{h>0} (\mathcal S_{t,h}^s)'(\rr d).
\end{equation}

The dual space $(\Sigma _t^s)'(\rr d)$ may be equipped with several topologies: the weak$^*$ topology, the strong topology, the Mackey topology, and the topology defined by the union \eqref{GSspacecond1}$'$ as an inductive limit topology \cite{Schaefer1}. The latter topology is the strongest topology such that the inclusion $(\mathcal S_{t,h}^s)'(\rr d) \subseteq (\Sigma _t^s)'(\rr d)$ is continuous for all $h > 0$.  

The Roumieu type Gelfand--Shilov space is the union 
\begin{equation*}
\mathcal S_t^s(\rr d) = \bigcup _{h>0}\mathcal S_{t,h}^s(\rr d)
\end{equation*}
equipped with the inductive limit topology \cite{Schaefer1}, that is 
the strongest topology such that each inclusion $\mathcal S_{t,h}^s(\rr d) \subseteq\mathcal S_t^s(\rr d)$
is continuous. 
Then $\mathcal S _t^s(\rr d)\neq \{ 0\}$ if and only if $s+t \geqs 1$ \cite{Gelfand2}. 
The corresponding (Roumieu type) Gelfand--Shilov ultradistribution space is 
\begin{equation*}
(\mathcal S_t^s)'(\rr d) = \bigcap _{h>0} (\mathcal S_{s,h}^t)'(\rr d). 
\end{equation*}
For every $s,t > 0$ such that $s+t > 1$, and for any $\ep > 0$ we have
\begin{equation*}
\Sigma _t^s (\rr d)\subseteq \mathcal S_t^s(\rr d)\subseteq
\Sigma _{t+\ep}^{s+\ep}(\rr d).
\end{equation*}
We will not use the Roumieu type spaces in this article but mention them as a service to a reader interested in a wider context. 

We write $\Sigma _s^s (\rr d) = \Sigma_s (\rr d)$ and $(\Sigma _s^s)' (\rr d) = \Sigma_s' (\rr d)$. 
Then $\Sigma_s(\rr d) \neq \{ 0 \}$ if and only if $s > \frac12$.

The Gelfand--Shilov (ultradistribution) spaces enjoy invariance properties, with respect to 
translation, dilation, tensorization, coordinate transformation and (partial) Fourier transformation.
The Fourier transform extends 
uniquely to homeomorphisms on $\mathscr S'(\rr d)$, from $(\mathcal
S_t^s)'(\rr d)$ to $(\mathcal
S_s^t)'(\rr d)$, and from $(\Sigma _t^s)'(\rr d)$ to $(\Sigma _s^t)'(\rr d)$, and restricts to 
homeomorphisms on $\mathscr S(\rr d)$, from $\mathcal S_t^s(\rr d)$ to $\mathcal S_s^t(\rr d)$, 
and from $\Sigma _t^s(\rr d)$ to $\Sigma _s^t(\rr d)$, and to a unitary operator on $L^2(\rr d)$.
Likewise \eqref{eq:moyal} holds when $u \in (\Sigma_t^s)'(\rr d)$, $f \in \Sigma_t^s(\rr d)$, $\fy \in \Sigma_t^s(\rr d)$ and $\| \fy \|_{L^2} = 1$. 

At one occasion we will need Gelfand--Shilov spaces defined on $\rr {2d}$ which has possibly different behavior with respect to the two $\rr d$ coordinates \cite{Abdeljawad1,Cappiello2,Gelfand2}. Then the seminorms \eqref{eq:seminormSigmas} 
are generalized into 
\begin{equation}\label{eq:seminormSigmas2}
\nm f{\mathcal S_{t_1,t_2,h}^{s_1,s_2}}\equiv \sup \frac {|x_1^{\alpha _1} x_2^{\alpha _2} D_{x_1} ^{\beta_1} D_{x_2} ^{\beta_2}
f( x_1, x_2 )|}{h^{|\alpha_1 + \alpha_2 + \beta_1 + \beta_2 |} \alpha_1 !^{t_1} \, \alpha_2 !^{t_2} \beta_1 !^{s_1} \beta_2 !^{s_2}}
\end{equation}
for $t_j, s_j > 0$, $j = 1,2$. The spaces $\Sigma_{t_1, t_2}^{s_1,s_2}(\rr {2d})$ and $(\Sigma_{t_1, t_2}^{s_1,s_2}) '(\rr {2d})$ are defined as above.

Working with Gelfand--Shilov spaces we will often need the inequality (cf. \cite{Cappiello2})
\begin{equation*}
|x+y|^{\frac1s} \leqs \kappa(s^{-1} ) ( |x|^{\frac1s} + |y|^{\frac1s}), \quad x,y \in \rr d, \quad s > 0, 
\end{equation*}
where 
\begin{equation*}
\kappa (t)
= 
\left\{
\begin{array}{ll}
1 & \mbox{if} \quad 0 < t \leqs 1 \\
2^{t-1} & \mbox{if} \quad t > 1
\end{array}
\right. ,
\end{equation*}
which implies 
\begin{equation*}
\begin{aligned}
e^{r |x+y|^{\frac1s} } & \leqslant e^{ \kappa(s^{-1} ) r |x|^{\frac1s}} e^{ \kappa(s^{-1} )r |y|^{\frac1s}},  \quad x,y \in \rr d, \quad r >0, \\
e^{- r \kappa(s^{-1} ) |x+y|^{\frac1s} } & \leqslant e^{- r |x|^{\frac1s}} e^{ \kappa(s^{-1} ) r |y|^{\frac1s}}, \quad x,y \in \rr d, \quad r >0. 
\end{aligned}
\end{equation*}

We will often use the following estimate where we use $|\alpha|! \leqs \alpha! d^{|\alpha|}$ for $\alpha \in \nn d$ \cite[Eq.~(0.3.3)]{Nicola1}. For any $s > 0$, $h > 0$ and any $\alpha \in \nn d$ we have 
\begin{equation}\label{eq:expestimate0}
\alpha!^{-s} h^{- |\alpha|} 
= \left( \frac{h^{- \frac{|\alpha|}{s}}}{\alpha!} \right)^s
\leqs  \left( \frac{ \left( d h^{- \frac{1}{s}} \right)^{|\alpha|}}{|\alpha|!} \right)^s
\leqs e^{s d h^{- \frac{1}{s}}}. 
\end{equation}
%

\subsection{Weyl pseudodifferential operators}

Finally we need some elements from the calculus of pseudodifferential operators \cite{Folland1,Hormander0,Nicola1,Shubin1}. 
Let $a \in C^\infty (\rr {2d})$ and $m \in \ro$. Then $a$ is a \emph{Shubin symbol} of order $m$, denoted $a\in \Gamma^m$, if for all $\alpha,\beta \in \nn d$ there exists a constant $C_{\alpha,\beta}>0$ such that
\begin{equation}\label{eq:shubinineq}
|\partial_x^\alpha \partial_\xi^\beta a(x,\xi)| \leqs C_{\alpha,\beta} \langle (x,\xi)\rangle^{m-|\alpha + \beta|}, \quad x,\xi \in \rr d.
\end{equation}
The Shubin symbols $\Gamma^m$ form a Fr\'echet space where the seminorms are given by the smallest possible constants in \eqref{eq:shubinineq}.

For $a \in \Gamma^m$ a pseudodifferential operator in the Weyl quantization is defined by
\begin{equation}\label{eq:weylquantization}
a^w(x,D) f(x)
= (2\pi)^{-d}  \int_{\rr {2d}} e^{i \langle x-y, \xi \rangle} a \left(\frac{x+y}{2},\xi \right) \, f(y) \, \dd y \, \dd \xi, \quad f \in \cS(\rr d),
\end{equation}
when $m<-d$. The definition extends to general $m \in \ro$ if the integral is viewed as an oscillatory integral.
The operator $a^w(x,D)$ then acts continuously on $\cS(\rr d)$ and extends uniquely by duality to a continuous operator on $\cS'(\rr d)$.
By Schwartz's kernel theorem the Weyl quantization procedure may be extended to a weak formulation which yields continuous linear operators $a^w(x,D):\cS(\rr{d}) \to \cS'(\rr{d})$, even if $a$ is only an element of $\cS'(\rr{2d})$.
Likewise $a^w(x,D):\Sigma_s(\rr{d}) \to \Sigma_s'(\rr{d})$ if $a \in \Sigma_s'(\rr {2d})$ and $s > \frac12$. 

If $s > \frac12$ and $a \in \Sigma_s'(\rr {2d})$ the Weyl quantization extends a continuous operator $\Sigma_s(\rr d) \rightarrow \Sigma_s'(\rr d)$
that satisfies
\begin{equation}\label{eq:wignerweyl}
(a^w(x,D) f, g) = (2 \pi)^{-d} (a, W(g,f) ), \quad f, g \in \Sigma_s(\rr d), 
\end{equation}
where the cross-Wigner distribution is defined as 
\begin{equation*}
W(g,f) (x,\xi) = \int_{\rr d} g (x+y/2) \overline{f(x-y/2)} e^{- i \la y, \xi \ra} \dd y, \quad (x,\xi) \in \rr {2d}. 
\end{equation*}
We have $W(g,f) \in \Sigma_s(\rr {2d})$ when $f,g \in \Sigma_s(\rr d)$. 

The real phase space $T^* \rr d \simeq \rr d \oplus \rr d$ is a real symplectic vector space equipped with the 
canonical symplectic form
\begin{equation*}
\sigma((x,\xi), (x',\xi')) = \langle x' , \xi \rangle - \langle x, \xi' \rangle, \quad (x,\xi), (x',\xi') \in T^* \rr d. 
\end{equation*}
This form can be expressed with the inner product as $\sigma(X,Y) = \la \J X, Y \ra$ for $X,Y \in T^* \rr d$
where 
\begin{equation}\label{eq:Jdef}
\J =
\left(
\begin{array}{cc}
0 & I_d \\
-I_d & 0
\end{array}
\right) \in \rr {2d \times 2d}. 
\end{equation}
The real symplectic group $\Sp(d,\ro)$ is the set of matrices in $\GL(2d,\ro)$ that leaves $\sigma$ invariant. 
Hence $\J \in \Sp(d,\ro)$. 

To each symplectic matrix $\chi \in \Sp(d,\ro)$ is associated an operator $\mu(\chi)$ that is unitary on $L^2(\rr d)$, and determined up to a complex factor of modulus one, such that
\begin{equation}\label{symplecticoperator}
\mu(\chi)^{-1} a^w(x,D) \, \mu(\chi) = (a \circ \chi)^w(x,D), \quad a \in \cS'(\rr {2d})
\end{equation}
(cf. \cite{Folland1,Hormander0}).
The operator $\mu(\chi)$ is a homeomorphism on $\mathscr S$ and on $\mathscr S'$.
The same conclusions hold if $a \in \Sigma_s'(\rr {2d})$ in the functional framework $\Sigma_s$, $\Sigma_s'$ if $s > \frac12$. 
In fact $\mu(\chi)$ is a homeomorphism on $\Sigma_s(\rr d)$ which extends uniquely to a homeomorphism on $\Sigma_s'(\rr d)$ \cite[Proposition~4.4]{Carypis1}. 

The mapping $\Sp(d,\ro) \ni \chi \rightarrow \mu(\chi)$ is called the metaplectic representation \cite{Folland1}.
It is in fact a representation of the so called $2$-fold covering group of $\Sp(d,\ro)$, which is called the metaplectic group.  
The metaplectic representation satisfies the homomorphism relation modulo a change of sign:
\begin{equation*}
\mu( \chi \chi') = \pm \mu(\chi ) \mu(\chi' ), \quad \chi, \chi' \in \Sp(d,\ro).
\end{equation*}
%

\section{The Gabor and the $t,s$-Gelfand--Shilov wave front sets}\label{sec:GelfandShilovWF}

First we define the Gabor wave front set $\WFg$ introduced in \cite{Hormander1} and further elaborated in \cite{Rodino2}. 

\begin{defn}\label{def:Gaborwavefront}
Let $\varphi \in \cS(\rr d) \setminus 0$, $u \in \cS'(\rr d)$ and $z_0 \in T^* \rr d \setminus 0$. 
Then $z_0 \notin \WFg(u)$ if there exists an open conic set $\Gamma \subseteq T^* \rr d \setminus 0$ such that $z_0 \in \Gamma$ and 
\begin{equation}\label{eq:conedecay}
\sup_{z \in \Gamma} \eabs{z}^N | V_\varphi u(z)| < \infty, \quad N \geqs 0. 
\end{equation}
\end{defn}

This means that $V_\varphi u$ decays rapidly (super-polynomially) in $\Gamma$. 
The condition \eqref{eq:conedecay} is independent of $\varphi \in \cS(\rr d) \setminus 0$, in the sense that super-polynomial decay will hold also for $V_\psi u$ if $\psi \in \cS(\rr d) \setminus 0$, in a possibly smaller cone containing $z_0$. 
The Gabor wave front set is a closed conic subset of $T^*\rr d  \setminus 0$. 
By \cite[Proposition~2.2]{Hormander1} it is symplectically invariant in the sense of 
\begin{equation}\label{eq:metaplecticWFG}
\WFg( \mu(\chi) u) = \chi \WFg (u), \quad \chi \in \Sp(d, \ro), \quad u \in \cS'(\rr d).
\end{equation}

The Gabor wave front set is naturally connected to the definition of the $C^\infty$ wave front set \cite[Chapter~8]{Hormander0}, often called just the wave front set and denoted $\WF$. 
For $u \in \cD'(\rr d)$ a point in the phase space $(x_0,\xi_0) \in T^* \rr d$ such that $\xi_0 \neq 0$ satisfies $(x_0,\xi_0) \notin \WF(u)$ 
if there exists $\varphi \in C_c^\infty(\rr d)$ such that $\varphi(0) \neq 0$, 
an open conical set $\Gamma_2 \subseteq \rr d \setminus 0$ such that $\xi_0 \in \Gamma_2$, and 
\begin{equation*}
\sup_{\xi \in \Gamma_2} \eabs{\xi}^N | V_\varphi u(x_0,\xi)| < \infty, \quad N \geqs 0. 
\end{equation*}

The difference compared to $\WFg (u)$ is that the $C^\infty$ wave front set $\WF(u)$ is defined in terms of super-polynomial decay in the frequency variable, for $x_0 \in \rr d$ fixed, instead of super-polynomial decay in an open cone in the phase space $T^* \rr d$ containing the point of interest. 

Pseudodifferential operators with Shubin symbols are microlocal with respect to the Gabor wave front set. 
In fact we have by  \cite[Proposition~2.5]{Hormander1} 
\begin{equation*}
\WFg (a^w(x,D) u) \subseteq  \WFg (u)
\end{equation*}
provided $a \in \Gamma^m$ and $u \in \cS'(\rr d)$.

Let $u \in (\Sigma_t^s)' (\rr d)$ with $s + t > 1$. 
If $\psi \in \Sigma_t^s (\rr d) \setminus 0$ then 
\begin{equation}\label{eq:STFTGFstdistr}
| V_\psi u (x,\xi)| \lesssim e^{r (|x|^{\frac1t} + |\xi|^{\frac1s})}
\end{equation}
for some $r > 0$. We have $u \in \Sigma_t^s (\rr d)$ if and only if 
\begin{equation}\label{eq:STFTGFstfunc}
| V_\psi u (x,\xi)| \lesssim e^{-r (|x|^{\frac1t} + |\xi|^{\frac1s})}
\end{equation}
for all $r > 0$. See e.g. \cite[Theorems~2.4 and 2.5]{Toft1}.

For $u \in (\Sigma_t^s)' (\rr d)$ we define the $t,s$-Gelfand--Shilov wave front set $\WF^{t,s} (u)$ as a closed subset of the phase space $T^* \rr d \setminus 0$ as follows. 

\begin{defn}\label{def:wavefrontGFst}
Let $s,t > 0$ satisfy $s + t > 1$, and suppose $\psi \in \Sigma_t^s(\rr d) \setminus 0$ and $u \in (\Sigma_t^s)'(\rr d)$. 
Then $(x_0,\xi_0) \in T^* \rr d \setminus 0$ satisfies $(x_0,\xi_0) \notin \WF^{t,s} (u)$ if there exists an open set $U \subseteq T^*\rr d \setminus 0$ containing $(x_0,\xi_0)$ such that 
\begin{equation}\label{eq:notinWFGFst1}
\sup_{\lambda > 0, \ (x,\xi) \in U} e^{r \lambda} |V_\psi u(\lambda^t x, \lambda^s \xi)| < \infty, \quad \forall r > 0. 
\end{equation}
\end{defn}

Due to \eqref{eq:STFTGFstdistr} it is clear that it suffices to check \eqref{eq:notinWFGFst1} for $\lambda \geqs L$
where $L > 0$ can be arbitrarily large, for each $r > 0$. 

A consequence of Definition \ref{def:wavefrontGFst} is that we have the scaling invariance (here we assume 
$(x,\xi) \in T^* \rr d \setminus 0$)
\begin{equation}\label{eq:WFstscalinv}
(x,\xi) \in \WF^{t,s} (u) \quad \Longleftrightarrow \quad (\lambda^t x, \lambda^s \xi) \in \WF^{t,s} (u) \quad \forall \lambda > 0. 
\end{equation}

Another immediate consequence of Definition \ref{def:wavefrontGFst} is 
\begin{equation}\label{eq:WFstsublinear}
\WF^{t,s} (u + v) \subseteq \WF^{t,s} (u) \cup \WF^{t,s} (v), \quad u,v \in (\Sigma_t^s)'(\rr d).
\end{equation}

If $t = s > \frac12$ and $u \in \Sigma_s' (\rr d)$ then $\WF^{s,s} (u) = \WF^s(u)$, that is we recapture 
the $s$-Gelfand--Shilov wave front set $\WF^s (u)$ (which is a slightly modified version of Cappiello's and Schulz's \cite[Definition~2.1]{Cappiello1}),
as defined originally in \cite[Definition~4.1]{Carypis1}: 

\begin{defn}\label{def:wavefrontGFs}
Let $s > 1/2$, $\psi \in \Sigma_s(\rr d) \setminus 0$ and $u \in \Sigma_s'(\rr d)$. 
Then $z_0 \in T^*\rr d \setminus 0$ satisfies $z_0 \notin \WF^s (u)$ if there exists an open conic set $\Gamma_{z_0} \subseteq T^*\rr d \setminus 0$ containing $z_0$ such that 
\begin{equation*}
\sup_{z \in \Gamma_{z_0}} e^{r | z |^{\frac1s}} |V_\psi u(z)| < \infty, \quad \forall r > 0. 
\end{equation*}
\end{defn}

In Definition \ref{def:wavefrontGFst} we ask for exponential decay with arbitrary parameter $r > 0$ (super-exponential) of $V_\psi u$ along the curve $C_{x,\xi} \in T^* \rr d$
defined by $\ro_+ \ni  \lambda \to (\lambda^t x, \lambda^s \xi)$ which passes through $(x,\xi) \in T^* \rr d \setminus 0$. 
This power type curve reduces to a straight line if $t=s$.
By \eqref{eq:STFTGFstdistr} a generic point $(x,\xi) \in T^* \rr d \setminus 0$ has an exponential growth upper bound along the curve $C_{x,\xi}$. 
Due to \eqref{eq:STFTGFstfunc} we have $\WF^{t,s} (u) = \emptyset$ if and only if $u \in \Sigma_t^s(\rr d)$. 
Thus $\WF^{t,s} (u) \subseteq T^* \rr d \setminus 0$ can be seen as a measure of singularities of $u \in (\Sigma_t^s)'(\rr d)$: It records the phase space points $(x,\xi) \in T^* \rr d \setminus 0$ such that $V_\psi u$ does not decay super-exponentially along the curve $C_{x,\xi}$, 
that is, does not behave like an element in $\Sigma_t^s(\rr d)$ there. 

We will soon show that Definition \ref{def:wavefrontGFst} does not depend on the window function $\psi \in \Sigma_t^s(\rr d) \setminus 0$ (see Proposition \ref{prop:windowinvariance}). 
If $\check u(x) = u(-x)$ then 
\begin{equation}\label{eq:evensteven0}
V_{\check \psi} \check u(x,\xi)
= V_\psi u(-x,-\xi). 
\end{equation}
If $u$ is even or odd we thus have the following symmetry:
\begin{equation}\label{eq:evensteven1}
\check u = \pm u \quad \Longrightarrow \quad \WF^{t,s} (u) = - \WF^{t,s} (u). 
\end{equation}

We also have 
\begin{equation}\label{eq:evensteven2}
V_{\psi} \overline{u}(x,\xi)
= \overline{V_{\overline \psi} u(x,-\xi)}. 
\end{equation}

\begin{rem}\label{rem:WFinclusion}
Suppose $s_j,t_j > 0$, $j=1,2$, $s_1 + t_1 > 1$, 
and $t_2/t_1 = s_2/s_1 = a \geqs 1$. 
Then we have for $u \in (\Sigma_{t_2}^{s_2})'(\rr d) \subseteq (\Sigma_{t_1}^{s_1})'(\rr d)$
\begin{equation*}
\WF^{t_2,s_2} (u) \subseteq \WF^{t_1,s_1} (u). 
\end{equation*}
In fact this follows directly from 
Definition \ref{def:wavefrontGFst} with $\psi \in \Sigma_{t_1}^{s_1}(\rr d) \setminus 0$, and 
$\lambda^{t_2} = (\lambda^ a)^{t_1}$, 
$\lambda^{s_2} = (\lambda^ a)^{s_1}$, 
and $\lambda \leqs \lambda^a$ for $\lambda \geqs 1$. 
\end{rem}

\subsection{Examples of Gabor and $s$-Gelfand--Shilov wave front sets}

In this subsection we compile known and deduce a few new results on the $t,s$-Gelfand--Shilov wave front set. 

We have 
\begin{equation}\label{eq:GaborGSinclusion}
\WFg ( u ) \subseteq \WF^s(u), \quad \forall s > \frac12, \quad u \in \cS'(\rr d). 
\end{equation}

If $\frac12 < s_1 < s_2$ then 
\begin{equation}\label{eq:strictinclusion1}
\Sigma_{s_1}(\rr d) \subsetneq \Sigma_{s_2}(\rr d), 
\end{equation}
\begin{equation}\label{eq:strictinclusion2}
\Sigma_{s_2}'(\rr d) \subsetneq \Sigma_{s_1}'(\rr d) 
\end{equation}
and 
\begin{equation*}
\WF^{s_2}(u) \subseteq \WF^{s_1}(u), \quad u \in \Sigma_{s_2}'(\rr d). 
\end{equation*}
The strictness of the inclusions \eqref{eq:strictinclusion1} and \eqref{eq:strictinclusion2} 
can be seen for instance from the  Hilbert sequence space characterizations of $\Sigma_{s}(\rr d)$ and $\Sigma_{s}'(\rr d)$
for series expansions in Hermite functions (cf. e.g. \cite{Wahlberg3}). 

If $u \in \Sigma_{s_2}(\rr d) \setminus \Sigma_{s_1}(\rr d)$ then $u \in \Sigma_{s_2}'(\rr d)$ and
\begin{equation*}
\WF^{s_2}(u) = \emptyset \neq \WF^{s_1}(u). 
\end{equation*}
So given $s_2 > s_1 > \frac12$ there exists $u \in \Sigma_{s_2}'(\rr d)$ such that $\WF^{s_2}(u) \neq \WF^{s_1}(u)$. 
This gives some motivation for the interest of the scale of wave front sets $\WF^{s}(u)$ for $s > \frac12$. 
In the given example it is a measure of very fine singularities within $\cS$. 

If on the other hand $u \in \Sigma_{s_1}'(\rr d) \setminus \Sigma_{s_2}'(\rr d)$ then 
$\WF^{s_1}(u)$ is well defined, and $\WF^{s_1}(u) \neq \emptyset$ since $u \in \Sigma_{s_1}$ would imply
$u \in \Sigma_{s_2}'$. But $\WF^{s_2}(u)$ is not well defined so we cannot compare $\WF^{s_1}(u)$ and $\WF^{s_2}(u)$. 

It is also clear that if $u \in \cS(\rr d) \setminus \Sigma_s(\rr d)$ for some $s > \frac12$
then $u \in \Sigma_s'(\rr d)$ and 
\begin{equation*}
\emptyset = \WFg (u) \neq  \WF^s(u). 
\end{equation*}

Nevertheless it seems that for most ultradistributions $u$ for which $\WF^s(u)$ can be determined we have 
\begin{equation*}
\WFg (u) = \WF^s(u) \quad \mbox{for all} \quad s > \frac12
\end{equation*}
(cf. \cite{Carypis1,PRW1}). 
We collect a few examples. 
For any $x \in \rr d$ we have
\begin{equation}\label{eq:diracWF}
\WFg (\delta_x) = \WF^s(\delta_x) = \{ 0 \} \times (\rr d \setminus 0) \quad \forall s > \frac12. 
\end{equation}
For any $\xi \in \rr d$ we have 
\begin{equation}\label{eq:planewaveWF}
\WFg ( e^{i \la \cdot, \xi \ra} ) = \WF^s( e^{i \la \cdot, \xi \ra} ) =  (\rr d \setminus 0) \times \{ 0 \} \quad \forall s > \frac12. 
\end{equation}
For any $A \in \rr {d \times d}$ symmetric we have 
\begin{equation}\label{eq:chirpWF1}
\WFg ( e^{i \la x, A x \ra/2} ) = \WF^s( e^{i \la x, A x \ra/2} ) =  \{ (x,Ax): \ x \in \rr d \setminus 0 \} \quad \forall s > \frac12. 
\end{equation}

The latter formula can be generalized, by combining \cite[Example~7.1]{PRW1} (generalized to the Gelfand--Shilov framework) and \cite[Corollary~9.2]{Carypis1}. 
This gives the following formula when $A \in \cc {d \times d}$ is symmetric and $\im A \geqs 0$: 
\begin{equation}\label{eq:chirpWF2}
\WFg ( e^{i \la x, A x \ra/2} ) = \WF^s( e^{i \la x, A x \ra/2} ) =  \{ (x, \re A \, x): \ x \in \rr d \cap \Ker (\im A) \setminus 0 \} \quad \forall s > \frac12. 
\end{equation}

If $d = 1$ then (cf. \cite[Section~8]{PRW1}) for $k \geqs 2$ we have
\begin{equation}\label{eq:evenosc1}
\WFg ( e^{i x^{2k}} ) = \{ 0 \} \times (\ro \setminus 0)
\end{equation}
and for $k \geqs 1$ we have 
\begin{equation}\label{eq:oddosc1}
\WFg ( e^{i x^{2k+1}} ) = \{ 0 \} \times \ro_+. 
\end{equation}
It also follows from the proof of \cite[Proposition~8.2]{PRW1} that 
\begin{equation}\label{eq:modulusosc1}
\WFg ( e^{i |x|^p} ) = \{ 0 \} \times (\ro \setminus 0)
\end{equation}
if $p > 2$. 

We obtain from  \eqref{eq:GaborGSinclusion} if $k \geqs 2$
\begin{equation*}
\{ 0 \} \times (\ro \setminus 0) 
\subseteq \WF^s( e^{i x^{2k}} )
\quad \forall s > \frac12
\end{equation*}
In the same way we obtain from \eqref{eq:oddosc1} if $k \geqs 1$
\begin{equation*}
\{ 0 \} \times \ro_+  
\subseteq \WF^s( e^{i x^{2k+1}} ) \quad \forall s > \frac12
\end{equation*}
and from \eqref{eq:modulusosc1}
\begin{equation*}
\{ 0 \} \times (\ro \setminus 0)  
\subseteq \WF^s( e^{i |x|^p} ) \quad \forall s > \frac12
\end{equation*}
if $p > 2$.

In \cite[Theorem~6.1]{Schulz1} we prove that given any closed conic set $\Gamma \subseteq T^*\rr d \setminus 0$
there exists $u \in \cS'(\rr d)$ such that $\WFg (u) = \Gamma$. 
By a careful examination of the proof it follows that $\WFg (u) = \WF^s(u) = \Gamma$ for all $s > \frac12$. 

A similar result is given in \cite[Proposition~3.5]{Cappiello1} for a wave front set that is similar to $\WF^s(u)$ albeit with the Roumieu choice of behaviour instead of Beurling.

\subsection{Invariances of the $t,s$-Gelfand--Shilov wave front set}

In \cite[Proposition~4.3]{Carypis1} it is shown that $\WF^s (u)$ does not depend on the chosen window function $\psi \in \Sigma_s(\rr d) \setminus 0$. 
The following result generalizes this statement to $\WF^{s,t} (u)$ with $t \neq s$. 

\begin{prop}\label{prop:windowinvariance}
Let $s,t > 0$ satisfy $s + t > 1$, 
and let 
$u \in (\Sigma_t^s)'(\rr d)$. 
Suppose $z_0 \in T^* \rr d \setminus 0$. 
If $\psi \in \Sigma_t^s(\rr d) \setminus 0$ and \eqref{eq:notinWFGFst1} holds for an open set $U \subseteq T^*\rr d \setminus 0$ containing $z_0$, 
and $\fy \in \Sigma_t^s(\rr d) \setminus 0$ then there exists an open set $V \subseteq U$ such that $z_0 \in V$ and 
\begin{equation}\label{eq:notinWFGFst2}
\sup_{\lambda > 0, \ (x,\xi) \in V} e^{r \lambda } |V_\fy u(\lambda^t x, \lambda^s \xi)| < \infty, \quad \forall r > 0. 
\end{equation}
\end{prop}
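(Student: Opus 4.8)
The plan is to reduce everything to a single pointwise convolution estimate coming from a change of window, and then to split that convolution into a near and a far region governed by the anisotropic scaling. First I would normalize $\|\psi\|_{L^2} = 1$, which is harmless since rescaling the window only multiplies $V_\psi u$ by a constant and hence does not affect \eqref{eq:notinWFGFst1}. Inserting the inversion formula \eqref{eq:STFTinverse} for the window $\psi$ into $V_\fy u(z) = (2\pi)^{-\frac d2}(u, M_\xi T_x \fy)$ and using the covariance of the STFT yields the standard majorization
\begin{equation*}
|V_\fy u(z)| \lesssim \int_{\rr {2d}} |V_\psi u(w)| \, |V_\fy \psi(z - w)| \, \dd w, \quad z \in \rr {2d}.
\end{equation*}
The point of this inequality is that, since $\psi \in \Sigma_t^s(\rr d)$, formula \eqref{eq:STFTGFstfunc} applied with window $\fy$ gives $|V_\fy \psi(v_1,v_2)| \lesssim e^{-R(|v_1|^{\frac1t} + |v_2|^{\frac1s})}$ for every $R > 0$, while $V_\psi u$ grows at most like $e^{r_0(|w_1|^{\frac1t} + |w_2|^{\frac1s})}$ for a fixed $r_0 > 0$ by \eqref{eq:STFTGFstdistr}.

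Next I would choose the neighborhood. Since $z_0 \neq 0$, I pick $V \subseteq U$ open with $z_0 \in V$, $\overline V$ compact and bounded away from $0$, and small enough that there is $\ep > 0$ with $(x + \Delta x, \xi + \Delta \xi) \in U$ whenever $(x,\xi) \in V$, $|\Delta x| \leqs \ep$ and $|\Delta \xi| \leqs \ep$. Fix $(x,\xi) \in V$ and set $z = (\lambda^t x, \lambda^s \xi)$; by the remark following Definition \ref{def:wavefrontGFst} it suffices to treat $\lambda \geqs L$ for $L$ large. Writing $w = z - v$, I split the integral according to whether $v = (v_1,v_2)$ lies in the anisotropic box $A = \{ |v_1| \leqs \ep \lambda^t, \ |v_2| \leqs \ep \lambda^s \}$ or in its complement $B$. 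On $A$ we have $z - v = (\lambda^t x', \lambda^s \xi')$ with $x' = x - \lambda^{-t} v_1$ and $\xi' = \xi - \lambda^{-s} v_2$, so $|x' - x| \leqs \ep$, $|\xi' - \xi| \leqs \ep$ and hence $(x',\xi') \in U$; the hypothesis \eqref{eq:notinWFGFst1} then gives $|V_\psi u(z-v)| \leqs C_N e^{-N \lambda}$ for every $N$, and since $V_\fy \psi \in L^1(\rr {2d})$ the contribution of $A$ to $e^{r\lambda}|V_\fy u(z)|$ is bounded by $C_N e^{(r - N)\lambda}$, which is uniformly bounded once $N \geqs r$.

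The main obstacle is the far region $B$, where $V_\psi u$ may genuinely grow. There I would use \eqref{eq:STFTGFstdistr} together with the subadditivity inequalities with constants $\kappa(s^{-1})$, $\kappa(t^{-1})$ recorded in Section \ref{sec:prelim} to estimate $|V_\psi u(z - v)| \lesssim e^{C_1 \lambda} e^{C_2(|v_1|^{\frac1t} + |v_2|^{\frac1s})}$, where $C_1$ absorbs the bound $|x|^{\frac1t} + |\xi|^{\frac1s} \leqs c_2$ valid on $\overline V$. The crucial geometric fact is that on $B$ one has $|v_1|^{\frac1t} + |v_2|^{\frac1s} \geqs \delta \lambda$ with $\delta = \min(\ep^{\frac1t}, \ep^{\frac1s})$. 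Splitting the decay parameter as $R = R_1 + R_2$, the factor $e^{r\lambda} e^{C_1 \lambda} e^{-R_1(|v_1|^{\frac1t}+|v_2|^{\frac1s})} \leqs e^{(r + C_1 - R_1 \delta)\lambda} \leqs 1$ as soon as $R_1 \geqs (r + C_1)/\delta$, while the remaining factor $e^{(C_2 - R_2)(|v_1|^{\frac1t}+|v_2|^{\frac1s})}$ is integrable over $\rr {2d}$ once $R_2 > C_2$. Choosing $R$ large enough thus bounds the contribution of $B$ uniformly in $\lambda \geqs L$ and $(x,\xi) \in V$, which together with the estimate on $A$ yields \eqref{eq:notinWFGFst2}. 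The one technical point to watch is precisely this balance in region $B$: the super-exponential decay of the window kernel $V_\fy\psi$ must be spent simultaneously on killing the $e^{(r + C_1)\lambda}$ growth and on securing integrability in $v$, which is exactly why one needs the lower bound $|v_1|^{\frac1t} + |v_2|^{\frac1s} \gtrsim \lambda$ on $B$.
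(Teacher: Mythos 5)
Your proposal is correct and follows essentially the same route as the paper's proof: the change-of-window convolution inequality $|V_\fy u| \lesssim |V_\psi u| * |V_\fy\psi|$, a decomposition of the convolution integral into an anisotropically scaled near region (where the shifted point stays in $U$ and the hypothesis \eqref{eq:notinWFGFst1} applies) and a far region (where the lower bound $|v_1|^{\frac1t}+|v_2|^{\frac1s}\gtrsim\lambda$ lets the arbitrarily fast decay of $V_\fy\psi$ absorb both the $e^{r\lambda}$ factor and the growth of $V_\psi u$). The only cosmetic difference is that you take the near region to be the box $\{|v_1|\leqs\ep\lambda^t,\ |v_2|\leqs\ep\lambda^s\}$ while the paper uses the sublevel set $\{|v_1|^{\frac1t}+|v_2|^{\frac1s}<2^{-\frac{1}{2v}}\ep^{\frac1v}\lambda\}$; both choices work and the exponent bookkeeping is the same.
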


\begin{proof}
Since $z_0 \in U \subseteq \rr {2d}$ where $U$ is open we may pick an open set $V \subseteq U$ 
such that $z_0 \in V$ and $V + \rB_\ep \subseteq U$ for some $0 < \ep \leqs 1$, and we may assume 
\begin{equation}\label{eq:Vbounds}
\sup_{z \in V} |z| \leqs |z_0| + 1 := \mu. 
\end{equation}

By \eqref{eq:STFTGFstdistr} we have 
\begin{equation}\label{eq:STFT1}
| V_\fy u (x,\xi)| \lesssim e^{r_1 (|x|^{\frac1t} + |\xi|^{\frac1s})}
\end{equation}
for some $r_1 > 0$. 
By \cite[Lemma~11.3.3]{Grochenig1} we have 
\begin{equation*}
|V_\fy u (z)| \leqs (2 \pi)^{-\frac{d}{2}} \| \psi \|_{L^2}^{-2} \,  |V_\psi u| * |V_\fy \psi | (z), \quad z \in \rr {2d}, 
\end{equation*}
and according to \eqref{eq:STFTGFstfunc} we have 
\begin{equation}\label{eq:STFTGFstwindow1}
| V_\fy \psi (x,\xi)| \lesssim e^{-r_2 (|x|^{\frac1t}+ |\xi|^{\frac1s})}
\end{equation}
for any $r_2 > 0$. 

Let $r > 0$ and $\lambda > 0$. We have
\begin{align*}
& e^{r \lambda} |V_\fy u (\lambda^t x, \lambda^s \xi)| \\
& \lesssim \iint_{\rr {2d}} e^{r \lambda } |V_\psi u ( \lambda^t (x- \lambda^{-t} y), \lambda^s (\xi - \lambda^{-s} \eta))| \ |V_\fy \psi (y,\eta) | \, \dd y \, \dd \eta \\
& = I_1 + I_2
\end{align*}
where we split the integral into the two terms 
\begin{align*}
I_1 = & \iint_{\rr {2d} \setminus \Omega_\lambda} e^{r \lambda} |V_\psi u ( \lambda^t (x- \lambda^{-t} y), \lambda^s (\xi - \lambda^{-s} \eta))| \ |V_\fy \psi (y,\eta) | \, \dd y \, \dd \eta, \\
I_2 = & \iint_{\Omega_\lambda} e^{r \lambda} |V_\psi u ( \lambda^t (x- \lambda^{-t} y), \lambda^s (\xi - \lambda^{-s} \eta))| \ |V_\fy \psi (y,\eta) | \, \dd y \, \dd \eta
\end{align*}
where  
\begin{equation*}
\Omega_\lambda = \{(y,\eta) \in \rr {2d}: |y|^{\frac1t} + |\eta|^{\frac1s} < 2^{-\frac{1}{2v}} \ep^{\frac1v} \lambda \} 
\subseteq \rr {2d}
\end{equation*}
with $v = \min(s,t)$. 

First we estimate $I_1$ when $(x,\xi) \in V$. 
Set $\kappa = \max(\kappa(t^{-1}), \kappa(s^{-1}))$. 
From \eqref{eq:Vbounds}, \eqref{eq:STFT1} and \eqref{eq:STFTGFstwindow1} we obtain for some $r_1 > 0$ and any $r_2 > 0$
\begin{equation}\label{eq:estimateI1a}
\begin{aligned}
I_1 
& \lesssim e^{r \lambda } \iint_{\rr {2d} \setminus \Omega_\lambda} e^{r_1 \lambda |x- \lambda^{-t} y|^{\frac1t} + r_1 \lambda |\xi- \lambda^{-s} \eta|^{\frac1s}}  \ |V_\fy \psi (y,\eta) | \, \dd y \, \dd \eta \\
& \leqs e^{ r \lambda+ \kappa r_1 \lambda |x|^{\frac1t} + \kappa r_1 \lambda |\xi|^{\frac1s} } \iint_{\rr {2d} \setminus \Omega_\lambda} e^{ r_1 \kappa (| y|^{\frac1t} +  | \eta|^{\frac1s}) }  \ |V_\fy \psi (y,\eta) | \, \dd y \, \dd \eta \\
& \lesssim e^{ \lambda \left( r + 2 r_1 \kappa \mu^{\frac1v} \right) } \iint_{\rr {2d} \setminus \Omega_\lambda} e^{ (\kappa r_1 - \kappa r_1 - 1 -  r_2) (|y|^{\frac1t} + |\eta|^{\frac1s})}  \, \dd y \, \dd \eta \\
& \leqs e^{ \lambda \left( r + 2 r_1 \kappa \mu^{\frac1v} \right)  - \lambda \, r_2 2^{-\frac{1}{2v}}\ep^{\frac1v} } \iint_{\rr {2d}} e^{ - (|y|^{\frac1t} + |\eta|^{\frac1s})}  \, \dd y \, \dd \eta \\
& \lesssim e^{ \lambda \left( r + 2 r_1 \kappa \mu^{\frac1v}  -  r_2 2^{-\frac{1}{2v}} \ep^{\frac1v } \right) }\leqs C_{r}
\end{aligned}
\end{equation}
for any $\lambda > 0$, 
provided we pick $r_2 \geqs 2^{\frac{1}{2v}} \ep^{- \frac1v} \left( r + 2 r_1 \kappa \mu^{\frac1v}\right)$. 
Here $C_{r} > 0$ is a constant that depends on $r > 0$ but not on $\lambda > 0$. 
Thus we have obtained the requested estimate for $I_1$. 

It remains to estimate $I_2$. 
From $|y|^{\frac1t} + |\eta|^{\frac1s} < 2^{-\frac{1}{2v}} \ep^{\frac1v} \lambda$ 
we obtain
\begin{align*}
& \lambda^{-t} |y| < \ep^{\frac{t}{v}} \, 2^{-\frac{t}{2v}} \leqs \ep \, 2^{-\frac{1}{2}}, \\
& \lambda^{-s} |\eta| < \ep^{\frac{s}{v}} \, 2^{-\frac{s}{2v}} \leqs \ep \, 2^{-\frac{1}{2}}
\end{align*}
which gives $(\lambda^{-t} y, \lambda^{-s} \eta ) \in \rB_{\ep}$.  
Hence if $(x,\xi) \in V$ then $( x- \lambda^{-t} y, \xi - \lambda^{-s} \eta) \in U$ and we may use the estimate 
\eqref{eq:notinWFGFst1}. 
This gives for a constant $C_{r} > 0$, using \eqref{eq:STFTGFstwindow1} 
\begin{equation}\label{eq:estimateI2a}
\begin{aligned}
I_2 = & \iint_{\Omega_\lambda} e^{r \lambda} |V_\psi u ( \lambda^t (x- \lambda^{-t} y), \lambda^s (\xi - \lambda^{-s} \eta))| \ |V_\fy \psi (y,\eta) | \, \dd y \, \dd \eta \\
& \leqs C_{r} \iint_{\rr {2d}} |V_\fy \psi (y,\eta) | \, \dd y \, \dd \eta \\
& \leqs C_{r}' 
\end{aligned}
\end{equation}
for all $\lambda > 0$, 
Thus we have obtained the requested estimate for $I_2$. 
The statement follows from \eqref{eq:estimateI1a} and \eqref{eq:estimateI2a}. 
\end{proof}

\subsection{Metaplectic properties}

The $s$-Gelfand--Shilov wave front set is symplectically invariant as (cf. \cite[Corollary~4.5]{Carypis1})
\begin{equation}\label{eq:metaplecticWFs}
\WF^s( \mu(\chi) u) = \chi \WF^s(u), \quad \chi \in \Sp(d, \ro), \quad u \in \Sigma_s'(\rr d), \quad s > \frac12.
\end{equation}

When $t \neq s$ the $t,s$-Gelfand--Shilov wave front set $\WF^{t,s} (u)$ is not symplectically invariant. 
Nevertheless, two of the generators of the symplectic group behave invariantly in certain individual senses which we now describe. 
By \cite[Proposition~4.10]{Folland1} each matrix $\chi \in \Sp(d,\ro)$ is a finite product of matrices in $\Sp(d,\ro)$ of the form
\begin{equation*}
\J, \quad 
\left(
  \begin{array}{cc}
  A^{-1} & 0 \\
  0 & A^{T}
  \end{array}
\right), 
\quad
\left(
  \begin{array}{cc}
  I & 0 \\
  B & I
  \end{array}
\right), 
\end{equation*}
for $A \in \GL(d,\ro)$ and $B \in \rr {d \times d}$ symmetric. 
The corresponding metaplectic operators are 
$\mu(\J) = \cF$, 
\begin{equation*}
\mu \left(
  \begin{array}{cc}
  A^{-1} & 0 \\
  0 & A^{T}
  \end{array}
\right) f(x)
= |A|^{\frac12} f(Ax), 
\end{equation*}
if $A \in \GL(d,\ro)$, and 
\begin{equation*}
\mu 
\left(
  \begin{array}{cc}
  I & 0 \\
  B & I
  \end{array}
\right) f(x)
= e^{\frac{i}{2} \la B x, x \ra} f(x), 
\end{equation*}
if $B \in \rr {d \times d}$ is symmetric. 

\begin{prop}\label{prop:WFstsymplectic}
Let $s,t > 0$ satisfy $s + t > 1$, and suppose 
$u \in (\Sigma_t^s)'(\rr d)$. Then we have 

\begin{enumerate}[\rm(i)]

\item 
\begin{equation*}
\WF^{s,t} (\wh u) = \J \WF^{t,s} (u);  
\end{equation*}

\item if $A \in \GL(d,\ro)$ and $u_A (x) = |A|^{\frac12} u(Ax)$ then 
\begin{equation*}
\WF^{t,s} (u_A) = 
\left(
  \begin{array}{cc}
  A^{-1} & 0 \\
  0 & A^{T}
  \end{array}
\right) 
\WF^{t,s} (u). 
\end{equation*}

\end{enumerate}

\end{prop}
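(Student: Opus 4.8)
The plan is to derive both identities from the covariance of the short-time Fourier transform, combine this with the window invariance of Proposition~\ref{prop:windowinvariance}, and exploit the fact that the two linear maps involved permute the anisotropic curves $\lambda \mapsto (\lambda^t x, \lambda^s \xi)$ among themselves. Since $\J$ and $\diag(A,A^{-T})$ are linear isomorphisms of $\rr {2d}$ fixing the origin, they send admissible open sets $U \subseteq T^*\rr d \setminus 0$ to admissible open sets, so throughout it suffices to track the decay condition \eqref{eq:notinWFGFst1} along single transformed curves.

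For (i), I would start from the standard interaction of the STFT with the Fourier transform, which in the present normalization reads
\[
V_\fy u(x,\xi) = e^{-i \la x, \xi \ra} V_{\wh\fy} \wh u(\xi, -x), \quad (x,\xi) \in T^* \rr d,
\]
so that $|V_\fy u(x,\xi)| = |V_{\wh\fy} \wh u(\xi,-x)|$, the phase factor having modulus one. Because the Fourier transform maps $\Sigma_t^s(\rr d)$ onto $\Sigma_s^t(\rr d)$, choosing $\psi \in \Sigma_t^s(\rr d) \setminus 0$ as window for $u$ makes $\wh\psi \in \Sigma_s^t(\rr d) \setminus 0$ an admissible window for $\wh u \in (\Sigma_s^t)'(\rr d)$. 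Evaluating on the curve gives $|V_\psi u(\lambda^t x, \lambda^s \xi)| = |V_{\wh\psi} \wh u(\lambda^s \xi, -\lambda^t x)|$, and the right-hand argument is precisely the $s,t$-curve $\lambda \mapsto (\lambda^s X, \lambda^t \Xi)$ through $(X,\Xi) = (\xi,-x) = \J(x,\xi)$. Hence the super-exponential decay defining $(x,\xi) \notin \WF^{t,s}(u)$ on $U$ is equivalent to that defining $\J(x,\xi) \notin \WF^{s,t}(\wh u)$ on $\J U$, which yields $\WF^{s,t}(\wh u) = \J \WF^{t,s}(u)$.

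For (ii), I would compute the dilation covariance directly from the definition of the STFT by a change of variables. Writing $\wt\psi = \psi(A^{-1} \cdot)$, one obtains
\[
V_\psi u_A(x,\xi) = |A|^{-\frac12} V_{\wt\psi} u(Ax, A^{-T} \xi), \quad (x,\xi) \in T^* \rr d .
\]
Since $\Sigma_t^s(\rr d)$ is invariant under invertible linear coordinate changes, $\wt\psi \in \Sigma_t^s(\rr d) \setminus 0$ is again admissible and may be used to test $\WF^{t,s}(u)$ by Proposition~\ref{prop:windowinvariance}. Evaluating on the curve gives $|V_\psi u_A(\lambda^t x, \lambda^s \xi)| = |A|^{-\frac12} |V_{\wt\psi} u(\lambda^t (Ax), \lambda^s (A^{-T}\xi))|$, so the decay at $(x,\xi)$ for $u_A$ matches the decay at $M(x,\xi)$ for $u$, where $M = \diag(A, A^{-T})$ acts blockwise and hence carries $t,s$-curves to $t,s$-curves. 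Therefore $(x,\xi) \notin \WF^{t,s}(u_A) \iff M(x,\xi) \notin \WF^{t,s}(u)$, and since $M^{-1} = \diag(A^{-1}, A^T)$ is exactly the matrix in the statement, this gives the asserted identity.

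The computations are routine; the point requiring the most care is the bookkeeping in (i), namely checking that the position/frequency swap carried out by $\J$ matches exactly the interchange of the exponents $t$ and $s$, so that a $t,s$-curve for $u$ is sent to an $s,t$-curve for $\wh u$. This matching is what forces the index swap from $\WF^{t,s}$ to $\WF^{s,t}$ and is the conceptual heart of the statement, and it also explains why, in contrast to the isotropic case $s=t$, the general map $\mu(\chi)$ cannot preserve $\WF^{t,s}$ for arbitrary $\chi \in \Sp(d,\ro)$: only the two generators above respect the product structure of the anisotropic curves.
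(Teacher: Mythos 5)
Your proof is correct and follows essentially the same route as the paper: both arguments rest on the STFT covariance identities $|V_\psi u(x,\xi)| = |V_{\wh\psi}\wh u(\J(x,\xi))|$ and $|V_{\psi_A} u_A(A^{-1}x,A^T\xi)| = |V_\psi u(x,\xi)|$ together with the window invariance of Proposition~\ref{prop:windowinvariance}; the paper merely cites these identities as the special cases $\chi = \J$ and $\chi = \diag(A^{-1},A^T)$ of the general metaplectic formula \eqref{eq:STFTmetaplectic}, whereas you derive them directly. Your explicit check that $\J$ carries $t,s$-curves to $s,t$-curves and that the block-diagonal map commutes with the anisotropic scaling is exactly the bookkeeping the paper leaves implicit.
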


\begin{proof}
Let $\psi \in \Sigma_t^s(\rr d) \setminus 0$.  
We have from the proof of \cite[Corollary~4.5]{Carypis1}
\begin{equation}\label{eq:STFTmetaplectic}
|V_{\mu (\chi) \psi} (\mu(\chi) u)( \chi(x,\xi))| = |V_{\psi} u( x, \xi)|
\end{equation}
for all $\chi \in \Sp(d, \ro)$. If $\chi = \J$ we obtain
\begin{equation*}
|V_{\wh \psi} \wh u(\J (x,\xi))| = |V_{\wh \psi} \wh u(\xi, - x)| = |V_{\psi} u( x,\xi )|. 
\end{equation*}
Note that $\wh \psi \in \Sigma_s^t(\rr d) \setminus 0$ and $\wh u \in (\Sigma_s^t)'(\rr d)$.
From this it follows that $(x,\xi) \notin \WF^{t,s} (u)$ if and only if $\J(x,\xi) \notin \WF^{s,t} (\wh u)$ which proves claim (i). 

Next we insert $u_A$ for $A \in \GL(d,\ro)$ into \eqref{eq:STFTmetaplectic} which gives
\begin{equation*}
|V_{\psi_A} u_A ( A^{-1}x, A^T \xi )| = |V_\psi u( x,\xi) |. 
\end{equation*}
Note that $\psi_A \in \Sigma_t^s(\rr d) \setminus 0$ and $u_A \in (\Sigma_t^s)'(\rr d)$.
We obtain $(x,\xi) \notin \WF^{t,s} (u)$ if and only if $(A^{-1}x, A^T \xi) \notin \WF^{t,s} (u_A)$ which shows claim (ii). 
\end{proof}

\begin{rem}\label{rem:propagator}
Proposition \ref{prop:WFstsymplectic} implies that $\WF^{v,s}$ when $s \neq v$ does not behave as $\WF^{s}$ with respect to Schr\"odinger type propagators, in the case of quadratic potential. 
In fact let $Q \in \rr {2d \times 2d}$ be symmetric, let 
\begin{equation*}
q(x,\xi) = \la (x,\xi), Q (x,\xi) \ra, \quad x, \ \xi \in \rr d, 
\end{equation*}
and consider the initial value Cauchy problem
\begin{equation}\label{eq:schrodeq}
\left\{
\begin{array}{rl}
\partial_t u(t,x) + i q^w(x,D_x) u (t,x) & = 0, \\
u(0,\cdot) & = u_0, 
\end{array}
\right.
\end{equation}
where $q^w(x,D_x)$ acts on the $x \in \rr d$ variable. 
If $u_0 \in D (q^w(x,D)) \subseteq L^2(\rr d)$, the domain of the closure of $q^w(x,D)$ considered as an unbounded operator in $L^2(\rr d)$, the equation is solved by 
\begin{equation*}
u(t,x) = e^{- i t q^w(x,D)} u_0
\end{equation*}
where $e^{- i t q^w(x,D)}$ is the propagator one-parameter group of unitary operators indexed by $t \in \ro$ (cf. e.g.\cite{Carypis1,Hormander2}). 
The propagator is the metaplectic operator $e^{- i t q^w(x,D)} = \mu( e^{2 t \J Q})$ \cite{Folland1}, 
which extends to a continuous operator on $\Sigma_s'(\rr d)$ for $s > \frac12$ and the equation \eqref{eq:schrodeq} admits initial datum $u_0 \in \Sigma_s'(\rr d)$ \cite{Carypis1,Wahlberg3}. 

By the metaplectic invariance \eqref{eq:metaplecticWFs} we thus have the propagation of singularities equality
\begin{equation}\label{eq:propschrodWFs}
\WF^s ( e^{- i t q^w(x,D)} u_0) = e^{2 t \J Q} \WF^s(u_0), \quad t \in \ro, \quad u_0 \in \Sigma_s'(\rr d), \quad s > \frac12. 
\end{equation}

If $Q = I_{2d}$ then 
\begin{equation*}
e^{2 t \J Q} =
\left(
\begin{array}{ll}
\cos 2t & \sin 2t \\
- \sin 2t & \cos 2t
\end{array}
\right)
\end{equation*}
so
\begin{equation}\label{eq:WFsFourier}
\WF^s ( e^{- i \frac{\pi}{4} q^w(x,D)} u_0)
= \WF^s ( \wh u_0) 
= \J \WF^s(u_0). 
\end{equation}

If $s \neq v$ then the equality \eqref{eq:propschrodWFs} cannot hold for $\WF^{v,s}$, 
since \eqref{eq:WFsFourier} for $\WF^{v,s} (u)$ would contradict Proposition \ref{prop:WFstsymplectic} (i).  
\end{rem}

The next result reveals 
that if $\WF^t (u)$ has empty intersection with the frequency axis $\{ 0 \} \times (\rr d \setminus 0)$
then $\WF^{t,s}$ is contained in the space axis $(\rr d \setminus 0) \times \{ 0 \}$ if $s > t$. 

\begin{prop}\label{prop:WFsvsWFst1}
If $s > t > \frac12$, $u \in (\Sigma_t^s)'(\rr d)$ and
\begin{equation}\label{eq:WFsfreqaxis}
\WF^t (u) \cap \{ 0 \} \times (\rr d \setminus 0) = \emptyset
\end{equation}
then 
\begin{equation}\label{eq:WFstconclusion1}
\WF^{t,s} (u) \subseteq (\rr d \setminus 0) \times \{ 0 \}. 
\end{equation}
\end{prop}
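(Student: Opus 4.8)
The plan is to show that every point $(x_0,\xi_0)\in T^*\rr d\setminus 0$ with $\xi_0\neq 0$ fails to lie in $\WF^{t,s}(u)$; since the complement of $(\rr d\setminus 0)\times\{0\}$ inside $T^*\rr d\setminus 0$ consists exactly of such points, this yields \eqref{eq:WFstconclusion1}. First I would fix a single window $\psi\in\Sigma_t^t(\rr d)\setminus 0$. Because $s>t$ we have $\mathcal S_{t,h}^t\subseteq\mathcal S_{t,h}^s$ and hence $\Sigma_t^t(\rr d)\subseteq\Sigma_t^s(\rr d)$, so $\psi$ is an admissible window for both wave front sets; by window invariance (\cite{Carypis1} for $\WF^t$ and Proposition \ref{prop:windowinvariance} for $\WF^{t,s}$) this choice is harmless. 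Dually $u\in(\Sigma_t^s)'(\rr d)\subseteq\Sigma_t'(\rr d)$, so $\WF^t(u)=\WF^{t,t}(u)$ is well defined and $V_\psi u$ is meaningful.

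The geometric heart of the argument is that, since $s>t$, the power curve $\lambda\mapsto(\lambda^t x,\lambda^s\xi)$ bends toward the frequency axis as $\lambda\to\infty$: indeed $|\lambda^t x|/|\lambda^s\xi|=\lambda^{t-s}|x|/|\xi|\to 0$. Fix $(x_0,\xi_0)$ with $\xi_0\neq 0$. By the hypothesis \eqref{eq:WFsfreqaxis} the frequency-axis point $(0,\xi_0)$ lies outside $\WF^{t,t}(u)$, so by Definition \ref{def:wavefrontGFs} there is an open cone $\Gamma$ with $(0,\xi_0)\in\Gamma$ on which $|V_\psi u|$ decays super-exponentially of order $1/t$. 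By conicity of $\Gamma$ one has $(\lambda^t x,\lambda^s\xi)\in\Gamma$ if and only if $(\lambda^{t-s}x,\xi)\in\Gamma$, and the latter point tends to $(0,\xi)$ as $\lambda\to\infty$. I would then pick a small neighborhood $U$ of $(x_0,\xi_0)$ on which $|\xi|$ is bounded below and $|x|$ above, and a threshold $\Lambda>0$ so large that $(\lambda^t x,\lambda^s\xi)\in\Gamma$ for all $\lambda\geqs\Lambda$ and all $(x,\xi)\in U$; this is obtained from the fact that a suitable compact frequency ball around $(0,\xi_0)$ sits inside the open cone $\Gamma$, together with $\lambda^{t-s}|x|\to 0$ uniformly on $U$.

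On the regime $\lambda\geqs\Lambda$ I would insert the isotropic decay estimate. The decisive gain is that $|(\lambda^t x,\lambda^s\xi)|\geqs\lambda^s|\xi|$, whence $|(\lambda^t x,\lambda^s\xi)|^{1/t}\geqs\lambda^{s/t}|\xi|^{1/t}$ with exponent $s/t>1$. Thus for any $r>0$ the bound on $\Gamma$ gives
\[ e^{r'\lambda}\,|V_\psi u(\lambda^t x,\lambda^s\xi)| \lesssim e^{\,r'\lambda - r\,c\,\lambda^{s/t}}, \qquad c=\left(\inf_{(x,\xi)\in U}|\xi|\right)^{1/t}>0, \]
and since $s/t>1$ the superlinear term $-r\,c\,\lambda^{s/t}$ dominates $r'\lambda$; choosing $r$ large this is bounded uniformly in $\lambda\geqs\Lambda$ and $(x,\xi)\in U$, for every $r'>0$. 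On the remaining range $0<\lambda<\Lambda$ the argument $(\lambda^t x,\lambda^s\xi)$ stays in a fixed bounded set while $e^{r'\lambda}\leqs e^{r'\Lambda}$, so continuity (smoothness) of $V_\psi u$ closes the estimate. Together these verify \eqref{eq:notinWFGFst1} on $U$, i.e. $(x_0,\xi_0)\notin\WF^{t,s}(u)$, proving the proposition.

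I expect the main obstacle to be the uniform capture step: checking that one open cone around $(0,\xi_0)$ furnished by the hypothesis eventually contains the whole family of curve points $(\lambda^t x,\lambda^s\xi)$, for $(x,\xi)$ in a neighborhood and $\lambda$ large, uniformly. This is exactly where the anisotropy $s>t$ enters decisively, through the conic rescaling $(\lambda^{t-s}x,\xi)$ and a compactness argument inside $\Gamma$; once this is in place, the decay estimate is routine given the exponent gap $s/t>1$.
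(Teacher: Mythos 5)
Your proposal is correct and follows essentially the same route as the paper: both use the hypothesis to obtain a cone about the frequency axis on which $|V_\psi u|$ decays like $e^{-r|z|^{1/t}}$ for all $r>0$ (the paper builds one global cone $\{|\xi|>C|x|\}$, you use a local cone at $(0,\xi_0)$, which suffices), show that the curves $\lambda\mapsto(\lambda^t x,\lambda^s\xi)$ enter that cone for $\lambda$ large because $s>t$, and then convert the conic decay into decay in $\lambda$. The only cosmetic difference is in the last step, where the paper bounds $e^{r\lambda}\leqs e^{r|(\lambda^t x,\lambda^s\xi)|^{1/t}}$ once $|(x,\lambda^{s-t}\xi)|\geqs 1$, while you exploit $|(\lambda^t x,\lambda^s\xi)|^{1/t}\geqs c\,\lambda^{s/t}$ with $s/t>1$, which is equally valid (and in fact yields a superlinear gain).
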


\begin{proof}
We have $(\Sigma_t^s)'(\rr d) \subseteq \Sigma_t'(\rr d)$ since $\Sigma_t(\rr d) \subseteq \Sigma_t^s(\rr d)$. 
By the assumption \eqref{eq:WFsfreqaxis} there exists $C > 0$ such that
for the open conic set
\begin{equation*}
\Gamma = \{ (x,\xi) \in T^* \rr d \setminus 0: |\xi| > C |x| \} \subseteq T^* \rr d
\end{equation*}
we have
\begin{equation*}
\sup_{z \in \Gamma} e^{r | z |^{\frac1t}} |V_\psi u(z)| < \infty \quad \forall r > 0
\end{equation*}
where $\psi \in \Sigma_t(\rr d) \setminus 0 \subseteq \Sigma_t^s (\rr d) \setminus 0$.

Let $(x_0,\xi_0) \in T^* \rr d \setminus 0$ where $\xi_0 \neq 0$. 
If $x_0 = 0$ we pick $U \subseteq \Gamma$ as an open set containing $(0,\xi_0)$.
Then if $(x,\xi) \in U$ we have $(\lambda^t x, \lambda^s \xi ) \in \Gamma$ for $\lambda \geqs 1$, 
since $|\xi| > C |x|$ implies $\lambda^{s-t} |\xi| > C |x|$. 
If instead $x_0 \neq 0$ then we pick as $U \subseteq \rr {2d}$ an open set containing $(x_0,\xi_0)$ such that $\ep < |x| < 2 |(x_0,\xi_0)|$
and $\ep < |\xi| < 2 |(x_0,\xi_0)|$ when $(x,\xi) \in U$ where $\ep > 0$. 
If $(x,\xi) \in U$ then 
\begin{equation*}
C |x| |\xi|^{-1}
< 2 | (x_0,\xi_0) | C \ep^{-1} \leqs \lambda^{s-t} 
\end{equation*}
if $\lambda \geqs L > 0$ provided $L$ is sufficiently large. 
This gives $(\lambda^t x, \lambda^s \xi ) \in \Gamma$ for $\lambda \geqs L$. 

If necessary we increase $L > 0$ such that $|(x, \lambda^{s-t} \xi)| \geqs 1$ when $\lambda \geqs L$ and $(x,\xi) \in U$.
This gives for any $r > 0$
\begin{align*}
\sup_{\lambda \geqs L, \ (x,\xi) \in U} e^{r \lambda } |V_\psi u(\lambda^t x, \lambda^s \xi)|
& \leqs \sup_{\lambda \geqs L, \ (x,\xi) \in U}  e^{r \lambda |(x, \lambda^{s-t} \xi)|^{\frac1t}} |V_\psi u(\lambda^t x, \lambda^s \xi)| \\
& \leqs \sup_{\lambda \geqs L, \ (x,\xi) \in U}  e^{r |(\lambda^t x, \lambda^s \xi)|^{\frac1t}} |V_\psi u(\lambda^t x, \lambda^s \xi)| \\
& \leqs \sup_{z \in \Gamma} e^{r | z |^{\frac1t}} |V_\psi u(z)| < \infty. 
\end{align*}
We have shown $(x_0, \xi_0) \notin \WF^{t,s} (u)$ which proves \eqref{eq:WFstconclusion1}. 
\end{proof}

\section{The $t,s$-Gelfand--Shilov wave front set of oscillatory functions}\label{sec:chirp}

A main reason for the introduction of the wave front set $\WF^{t,s} (u)$ is that it describes accurately the phase space singularities of oscillatory functions of the form 
\begin{equation}\label{eq:chirpdef1}
u(x) = e^{i c x^m}, \quad x \in \ro, \quad m \in \no \setminus \{0, 1\} 
\end{equation}
or
\begin{equation}\label{eq:chirpdef2}
u(x) = e^{i c |x|^\alpha}, \quad x \in \ro, \quad \alpha \in \ro \setminus 2 \no, \quad \alpha > 1
\end{equation}
where 
$c \in \ro \setminus 0$ in both cases. 
These functions are known as chirp signals. 
Here we work in dimension $d = 1$. 
In \eqref{eq:chirpdef2} we ask $\alpha \notin 2 \no$ since $\alpha \in 2 \no$ is covered by \eqref{eq:chirpdef1}.

If $u$ is defined by \eqref{eq:chirpdef1}, 
and $s$ is chosen adapted to $t$ and $m$, 
we will see that $\WF^{t,s} (u)$ is the curve in phase space described by the instantaneuos frequency of $u$, that is the 
derivative of the phase function. 
 
We will need a lemma. 

\begin{lem}\label{lem:chirplemma}
Suppose $s, t, \ep > 0$, $U \subseteq \rr {2d} \setminus 0$ is open
and $f \in C^\infty(\rr {2d})$. 
If the estimate
\begin{equation*}
\sup_{(x,\xi) \in U} \lambda^{s k} \ep^{2k} | f (\lambda^t x, \lambda^s \xi) | 
\leqs C_h \lambda^t h^k k!^s
\end{equation*}
holds for all $h > 0$, all $\lambda \geqs 1$ and all $k \in \no$, then 
for any $r > 0$ and any $\lambda \geqs 1$ we have
\begin{equation*}
\sup_{(x,\xi) \in U}  e^{r \lambda} \left| f (\lambda^t x,\lambda^s\xi) \right| \leqs C_{r,\ep,t}. 
\end{equation*}
\end{lem}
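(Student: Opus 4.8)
The plan is to turn the family of Gevrey-type bounds — one for every $k \in \no$ and every $h > 0$ — into genuine (super-)exponential decay by tying the free integer $k$ to $\lambda$ and optimizing. Write $g(\lambda) = \sup_{(x,\xi) \in U} |f(\lambda^t x, \lambda^s \xi)|$, so that the hypothesis reads, for all $h > 0$, all $\lambda \geqs 1$ and all $k \in \no$,
\begin{equation*}
g(\lambda) \leqs C_h \, \lambda^t \left( \frac{h}{\ep^2 \lambda^s} \right)^k k!^s.
\end{equation*}
Setting $b = \ep^2 \lambda^s / h$, this is $g(\lambda) \leqs C_h \lambda^t b^{-k} k!^s$ for every $k$, and the whole point is that we may let $k$ depend on $\lambda$.

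Fix $r > 0$. First I would choose $h = h(r,\ep,s) > 0$ small (the precise threshold is fixed at the very end), and for each $\lambda \geqs 1$ select the \emph{integer} $k = \lfloor b^{1/s} \rfloor$, where $b^{1/s} = \ep^{2/s} h^{-1/s} \lambda$. Taking $h$ small enough that $\ep^{2/s} h^{-1/s} \geqs 1$ guarantees $b^{1/s} \geqs 1$ and hence $k \geqs 1$, so the elementary Stirling bound $k! \leqs e \, k^{k+1/2} e^{-k}$ applies. Since $k \leqs b^{1/s}$ gives $b^{-1} k^s \leqs 1$, while $k \geqs b^{1/s} - 1$ gives $e^{-sk} \leqs e^s e^{-s b^{1/s}}$, the key estimate becomes
\begin{equation*}
b^{-k} k!^s \leqs e^s k^{s/2} \left( b^{-1} k^s e^{-s} \right)^k \leqs e^{2s} \, b^{1/2} \, e^{-s b^{1/s}}.
\end{equation*}
Substituting $b^{1/2} = \ep h^{-1/2} \lambda^{s/2}$ and $b^{1/s} = \ep^{2/s} h^{-1/s} \lambda$ then yields
\begin{equation*}
g(\lambda) \leqs C_h \, e^{2s} \, \ep \, h^{-1/2} \, \lambda^{t + \frac s2} \, e^{-s \ep^{2/s} h^{-1/s} \lambda}.
\end{equation*}

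Finally I would fix $h$ so small that the exponential rate dominates $r$, namely $s \ep^{2/s} h^{-1/s} \geqs 2r$ (together with $\ep^{2/s} h^{-1/s} \geqs 1$ used above), which is possible since the rate tends to $+\infty$ as $h \to 0^+$. Then
\begin{equation*}
e^{r \lambda} g(\lambda) \leqs C_h \, e^{2s} \, \ep \, h^{-1/2} \, \lambda^{t + \frac s2} \, e^{-r \lambda},
\end{equation*}
whose right-hand side is bounded uniformly over $\lambda \geqs 1$ because $\lambda^{t + s/2} e^{-r\lambda}$ is; taking the supremum over $\lambda \geqs 1$ gives the asserted bound $C_{r,\ep,t}$. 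The main point — and the only nonroutine step — is the optimization: the free parameter $k$ must be pinned at the scale $k \sim \ep^{2/s} h^{-1/s} \lambda$ and the factorial handled with the sharp $e^{-k}$ factor, since a crude $k! \leqs k^k$ is too lossy and produces no decay at all. Once this is done the exponential rate is \emph{linear} in $\lambda$ and can be pushed past any prescribed $r$ merely by shrinking $h$, the hypothesis constant $C_h$ being then a fixed function of $r,\ep,s$.
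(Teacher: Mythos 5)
Your argument is correct, and it reaches the conclusion by a route that differs in mechanism, though not in spirit, from the paper's. Both proofs face the same task: converting the family of bounds $g(\lambda)\leqs C_h\lambda^t\,b^{-k}k!^s$ (with $b=\ep^2\lambda^s/h$), valid for every $k\in\no$, into decay of order $e^{-c\lambda}$. You do this by selecting, for each $\lambda$, the near-optimal single index $k=\lfloor b^{1/s}\rfloor$ and invoking Stirling's bound $k!\leqs e\,k^{k+1/2}e^{-k}$, which after the substitutions yields $g(\lambda)\lesssim\lambda^{t+s/2}e^{-s\ep^{2/s}h^{-1/s}\lambda}$; shrinking $h$ then pushes the rate past any prescribed $r$. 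The paper instead avoids Stirling entirely: it expands $e^{\frac{r\lambda}{s}\ep^{2/s}}|f|^{1/s}$ as the exponential power series $\sum_k 2^{-k}k!^{-1}(2r s^{-1}(\lambda^s\ep^2)^{1/s})^k|f|^{1/s}$, bounds the sum by twice the supremum of its terms, and applies the hypothesis to each term, the condition $h\leqs(s/(2r))^s$ making the resulting geometric factor bounded. The two devices are classical duals of one another (optimizing a single term versus summing all terms with the $k!^{-1}$ weights of $e^X$); your version costs a Stirling estimate and a little bookkeeping with $\lfloor\cdot\rfloor$ but makes the dependence of the decay rate on $h$ and $\ep$ completely explicit, while the paper's version is shorter and needs no asymptotics for the factorial. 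Your handling of the constants is sound: $h$ is fixed as a function of $r,\ep,s$ only, so the final bound depends on $r,\ep,s,t$ and the hypothesis constants $C_h$, exactly as in the statement.
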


\begin{proof}
Let $r > 0$. We have if $(x,\xi) \in U$
\begin{align*}
e^{\frac{r \lambda}{s} \ep^{\frac{2}{s}}} \left| f (\lambda^t x,\lambda^s \xi) \right|^{\frac{1}{s}}
& = 
\sum_{k=0}^{\infty} 2^{-k} k!^{-1} \left( \frac{2 r}{s} ( \lambda^s \ep^2 )^{\frac{1}{s}} \right)^k
\left| f (\lambda^t x,\lambda^s \xi) \right|^{\frac{1}{s}} \\
& \leqs 
2 \left( \sup_{k \geqs 0}  k!^{-s} \left( \left( \frac{2 r}{ s} \right)^s \lambda^{s} \ep^2 \right)^k  \left| f ( \lambda^t x,\lambda^s \xi ) \right| \right)^{\frac{1}{s}} \\
& \leqs 2 \, C_h^{\frac1s} \lambda^{\frac{t}{s}} \sup_{k \geqs 0} \left( \left( \frac{2r}{s} \right)^{s} h \right)^{\frac{k}{s}} \\
& \leqs C_r^{\frac1s} \lambda^{\frac{t}{s}}
\end{align*}
for all $\lambda \geqs 1$,
provided we pick 
\begin{equation*}
0 < h \leqs \left( \frac{s}{2r} \right)^{s}. 
\end{equation*}

Thus 
for any $r > 0$, $(x,\xi) \in U$ and $\lambda \geqs 1$
\begin{equation*}
e^{r \lambda \ep^{\frac{2}{s}}} \left| f ( \lambda^t x , \lambda^s \xi) \right|
\leqs C_r \lambda^t
\end{equation*}
which gives finally 
\begin{equation}\label{eq:WFstnonmembership1q}
\begin{aligned}
e^{r \lambda} \left| f (\lambda^t x,\lambda^s\xi) \right|
& = e^{- r \lambda } e^{ 2 r \ep^{-\frac2s} \lambda \ep^{\frac2s}} \left| f (\lambda^t x,\lambda^s \xi) \right| \\
& \leqs C_{r,\ep} \lambda^t e^{- r \lambda } \\
& \leqs C_{r,\ep,t}
\end{aligned}
\end{equation}
for all $\lambda \geqs 1$ and $(x,\xi) \in U$. 
\end{proof}

The next result generalizes \eqref{eq:chirpWF1} for $d=1$.

\begin{thm}\label{thm:chirpWFst}
Suppose 
$c \in \ro \setminus 0$. 

\begin{enumerate}[\rm (i)]

\item If $u$ is defined by \eqref{eq:chirpdef1} and $t > \frac1{m-1}$ then 
\begin{equation}\label{eq:conclusion1a}
\WF^{t,t(m-1)} (u) 
= \{ (x, c m x^{m-1} ) \in \rr 2: \ x \neq 0 \}. 
\end{equation}

\item If $u$ is defined by \eqref{eq:chirpdef2} and $t > \frac1{\alpha-1}$ then 
\begin{equation}\label{eq:conclusion2}
\{ 0 \} \times (\ro \setminus 0) \subseteq 
\WF^{t,t(\alpha-1)} (u) 
\subseteq \{ (x, c \alpha \sgn(x) |x|^{\alpha-1} ) \in \rr 2: \ x \neq 0 \}
\cup \{ 0 \} \times (\ro \setminus 0) . 
\end{equation}

\end{enumerate}

\end{thm}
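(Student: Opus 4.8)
The plan is to read off $\WF^{t,s}(u)$, with $s=t(m-1)$ in case (i) and $s=t(\alpha-1)$ in case (ii), from the behaviour of $V_\psi u(\lambda^t x,\lambda^s\xi)$ as $\lambda\to\infty$, via the method of (non-)stationary phase. Writing $y=\lambda^t x+w$ and $\Xi=\lambda^s\xi$ in case (i), the phase in the variable $w$ is $\Psi(w)=c(\lambda^t x+w)^m-w\Xi$, with $\Psi'(0)=\lambda^s(cmx^{m-1}-\xi)$; thus $\Psi$ is non-stationary at the window centre $w=0$ precisely when $(x,\xi)$ lies off the curve $\xi=cmx^{m-1}$, whereas its real stationary point, when it exists, is $w^\ast=\lambda^t\big((\xi/(cm))^{1/(m-1)}-x\big)$, at distance of order $\lambda^t$ from $0$ off the curve. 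By Proposition~\ref{prop:windowinvariance} the set $\WF^{t,s}(u)$ does not depend on $\psi\in\Sigma_t^s(\rr d)\setminus0$, so I would choose $\psi$ conveniently and use throughout the Gelfand--Shilov bound $|\psi(w)|\lesssim e^{-r|w|^{1/t}}$, valid for every $r>0$; the fact that the \emph{spatial} decay exponent is $1/t$ (so that $|\psi(w^\ast)|\lesssim e^{-r(\lambda^t)^{1/t}}=e^{-r\lambda}$) is exactly what renders the far-away stationary point harmless.

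For the inclusion $\WF^{t,s}(u)\subseteq\{\text{curve}\}$ (and, in case (ii), $\subseteq\{\text{curve}\}\cup\{0\}\times(\ro\setminus0)$), I would show that any point off the curve with $x_0\neq0$ is absent from the wave front set. Fixing a small neighbourhood $U$ on which $|cmx^{m-1}-\xi|\geqs2\delta$ and on which $x$ stays bounded away from $(\xi/(cm))^{1/(m-1)}$, one gets $|\Psi'(w)|\gtrsim\delta\lambda^s$ for $|w|\leqs\tfrac{\delta'}{2}\lambda^t$. I would then split the integral at $|w|=\tfrac{\delta'}{2}\lambda^t$: on the inner part $k$-fold integration by parts with $L=(i\Psi')^{-1}\partial_w$ gains $\lambda^{-s}$ per step while each derivative of $\overline\psi$ costs $h^jj!^s$, giving a bound $C_h\lambda^t(h\delta^{-1}\lambda^{-s})^kk!^s$; on the outer part I would discard the oscillation and use $|\psi(w)|\lesssim e^{-r|w|^{1/t}}\leqs e^{-r'\lambda}$ together with $\sup_{\lambda\geqs1}\lambda^{sk}e^{-r'\lambda}\leqs C^k(r')^{-sk}k!^s$, which for $r'$ large reproduces the same bound with arbitrarily small constant. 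Setting $\ep^2=\delta$, this verifies the hypothesis of Lemma~\ref{lem:chirplemma} for the full transform and for every $h>0$, and the lemma upgrades it to the super-exponential decay in \eqref{eq:notinWFGFst1}, so the point is not in $\WF^{t,s}(u)$. In case (ii) the non-smooth point $y=0$ of $|y|^\alpha$ corresponds to $w=-\lambda^t x_0$, which lies in the outer region once $\delta'<2|x_0|$, so no integration by parts is performed there and the lack of smoothness is immaterial.

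For the reverse inclusions I would establish that the decay along the relevant curve is at most polynomial in $\lambda$. In case (i), for $(x_0,\xi_0)$ on the curve with $x_0\neq0$ and $\psi$ chosen with $\psi(0)\neq0$, the phase has a stationary point at $w=0$ with $\Psi''(0)=cm(m-1)\lambda^{s-t}x_0^{m-2}$; rescaling $w=\lambda^{-(s-t)/2}v$ sends all cubic and higher terms to negative powers of $\lambda$, so stationary phase yields $|V_\psi u(\lambda^t x_0,\lambda^s\xi_0)|\sim\lambda^{-(s-t)/2}$, which is not super-exponentially small. As any neighbourhood of $(x_0,\xi_0)$ contains the point itself, the supremum in \eqref{eq:notinWFGFst1} is infinite for some $r>0$, whence $(x_0,\xi_0)\in\WF^{t,s}(u)$; this proves \eqref{eq:conclusion1a}. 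In case (ii), for $(0,\xi_0)$ on the frequency axis I would analyse $V_\psi u(0,\lambda^s\xi_0)$: any stationary point again sits at distance $\sim\lambda^t$ and is super-exponentially damped by $\psi$, so the leading contribution comes from the conormal singularity of $e^{ic|y|^\alpha}$ at $y=0$, whose windowed Fourier transform decays only polynomially (of order $|\Xi|^{-(\alpha+1)}$). This precludes super-exponential decay and places the whole frequency axis in $\WF^{t,s}(u)$.

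The main obstacle is the bookkeeping of the inner-region estimate: expanding $(L^\ast)^k\overline\psi$ by Leibniz and Fa\`a di Bruno and checking that the powers of $\lambda$ created by differentiating $1/\Psi'$ (each further derivative of $\Psi'$ contributing at most $\lambda^{t(m-1-j)}$, and vanishing once $j\geqs m$) combine with the $h^jj!^s$ from $\overline\psi$ to produce exactly the $\lambda^{sk}$ gain and the factor $k!^s$ that Lemma~\ref{lem:chirplemma} requires, uniformly over $(x,\xi)\in U$ and $\lambda\geqs1$. A secondary delicate point is guaranteeing in the stationary-phase lower bounds that the leading coefficient is nonzero and dominates the remainder uniformly in $\lambda$. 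Throughout, the hypothesis $t>\tfrac1{m-1}$ (respectively $t>\tfrac1{\alpha-1}$), which is equivalent to $s>1$, is what keeps these asymptotics and the ambient space $\Sigma_t^s(\rr d)$ mutually consistent.
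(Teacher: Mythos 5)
Your proposal is essentially correct, and for the upper inclusions it follows the same route as the paper: non-stationary phase along the rescaled curve, the key exponent count $\lambda^{nt}\cdot\lambda^{mt(n-2k)}\leqs\lambda^{t}\lambda^{-sk}$ with $s=t(m-1)$, Gevrey control of the window derivatives producing $h^kk!^s$, and then Lemma \ref{lem:chirplemma} to upgrade to super-exponential decay. The one structural difference there is that the paper chooses $\psi$ compactly supported in $\rB_\delta$ (available precisely because $s=t(m-1)>1$), so your outer region is empty and no splitting at $|w|\sim\lambda^t$ is needed; your version with a general window and the bound $\sup_\lambda\lambda^{sk}e^{-r'\lambda}\lesssim(s/r')^{sk}k!^s$ on the far stationary point also works, at the cost of the extra bookkeeping you flag. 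Where you genuinely diverge is in the reverse inclusions. For (i) you propose a quantitative stationary-phase lower bound $|V_\psi u(\lambda^tx_0,\lambda^s\xi_0)|\sim\lambda^{-(s-t)/2}$ at points on the curve; this is viable (the rescaling $w=\lambda^{-(s-t)/2}v$ does kill the cubic and higher terms, and window independence lets you take $\psi(0)\neq0$), but it requires a uniform-in-$\lambda$ remainder estimate in Hörmander's stationary phase theorem to guarantee the leading Fresnel term dominates — exactly the "secondary delicate point" you mention. The paper avoids this entirely with a soft argument: $\WF^{t,s}(u)\neq\emptyset$ since $u\notin\Sigma_t^s$, and the parity symmetries \eqref{eq:evensteven0}--\eqref{eq:evensteven2} (for $m$ even, $\check u=u$; for $m$ odd, $\check u=\overline u$) together with scaling invariance force the nonempty subset of $W$ to be all of $W$. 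Similarly for the frequency axis in (ii), you assert a polynomial decay rate $|\Xi|^{-(\alpha+1)}$ for the conormal singularity at $y=0$, which would need its own proof; the paper instead argues by contradiction that super-exponential decay of $\cF(\psi\,e^{-ic|\cdot|^\alpha})$ (even window, so decay in one direction gives both) would force $\psi\,e^{-ic|\cdot|^\alpha}$ to be smooth, impossible for $\alpha\notin2\no$ with $\psi(0)\neq0$. Your constructive lower bounds give more information when they can be completed, but the paper's symmetry and smoothness-contradiction arguments reach the stated equalities with far less analytic overhead.
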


\begin{proof}
Case (i): 
Set $s = t(m-1) > 1$. 
This implies that there are compactly supported Gevrey functions \cite{Rodino1} of order $s$ in the space $\Sigma_t^s(\ro)$. 
Set 
\begin{align*}
W 
& = \{ (x, c m x^{m-1} ) \in \rr 2: \ x \neq 0 \} \subseteq \rr 2 \setminus 0.
\end{align*}

Suppose $(x_0, \xi_0) \in \rr 2 \setminus 0$ and $(x_0, \xi_0) \notin W$. 
Then there exists an open set $U$ such that $(x_0,\xi_0) \in U$, and $0 < \ep \leqs 1$, $\delta > 0$,
such that
\begin{align*}
(x,\xi) \in U,  \quad |x-y| \leqs \delta & \quad \Longrightarrow \quad  |\xi - c m x^{m-1} | \geqs 2 \ep, \quad
m \, |c| \, | x^{m-1} - y^{m-1}| \leqs \ep.
\end{align*}
Then if $(x,\xi) \in U$ and $|x-y| \leqs \delta$
\begin{equation}\label{eq:lowerboundfas0}
|\xi - c m y^{m-1}| 
\geqs |\xi - c m x^{m-1}| - m \, |c| \, |  y^{m-1} - x^{m-1} \,| 
\geqs \ep.
\end{equation}

Let $\psi \in \Sigma_t^s(\ro) \setminus 0$ be such that $\supp \psi \subseteq \rB_\delta$. 
From the stationary phase theorem \cite[Theorem~7.7.1]{Hormander0}
this gives for any $k \in \no$, any $h > 0$ and any $\lambda \geqs 1$, if $(x,\xi) \in U$, using \eqref{eq:lowerboundfas0} and \eqref{eq:expestimate0},
\begin{equation}\label{eq:estimateWF0}
\begin{aligned}
|V_\psi u ( \lambda^t x, \lambda^s \xi)|
& = (2 \pi)^{-\frac12} \left| \int_{\ro} e^{i (c y^{m} - y \lambda^s \xi )} \overline{\psi( \lambda^t (\lambda^{-t} y-x) )} \, \dd y \right| \\
& = (2 \pi)^{-\frac12} \lambda^t \left| \int_{\ro} e^{i \lambda^{m t} (c y^{m} - y \xi ) )} \overline{\psi( \lambda^t (y-x) )} \, \dd y \right| \\
& \leqs C \lambda^t \sum_{n = 0}^k \lambda^{n t} \sup_{|x-y| \leqs \delta} |(D^n\psi)( \lambda^t (y-x) )| \, |\xi - c m y^{m-1}|^{n - 2k} \lambda^{m t (n-2k)} \\
& \leqs C \lambda^t \ep^{- 2 k} \sum_{n = 0}^k \sup_{|x-y| \leqs \delta} |(D^n\psi)( \lambda^t (y-x) )| \lambda^{- t k (m-1)} \lambda^{t(1+m) (n-k)} \\
& \leqs C \lambda^{t} \ep^{- 2 k}  \lambda^{- s k}  \sum_{n = 0}^k \sup_{|x-y| \leqs \delta} |(D^n\psi)( \lambda^t (y-x) )| \\
& \leqs C_h \lambda^{t} \ep^{- 2 k} \lambda^{- s k} \sum_{n = 0}^k   h^n n!^s  \\
& = C_h \lambda^{t} \ep^{- 2 k} \lambda^{- s k} h^k \sum_{n = 0}^k   
h^{-(k-n)} n!^s \\
& \leqs C_h \lambda^{t} \ep^{- 2 k} \lambda^{- s k} h^k e^{s h^{-\frac1s}} \sum_{n = 0}^k  (n! (k-n)!)^s \\
& \leqs C_{s,h}  \lambda^{t} \ep^{- 2 k} \lambda^{- s k} h^k  k!^s  \sum_{n = 0}^k   \\
& \leqs C_{s,h}  \lambda^{t} \ep^{- 2 k} \lambda^{- s k} (2 h)^k  k!^s . 
\end{aligned}
\end{equation}

Since $h > 0$ is arbitrary
we obtain
\begin{equation}\label{eq:STFTestimate0}
\lambda^{s k} \ep^{2k} | V_\psi u (\lambda^t x, \lambda^s \xi) | 
\leqs C_h \lambda^t h^k k!^s, \quad (x,\xi) \in U, 
\end{equation}
for all $h > 0$, all $\lambda \geqs 1$ and all $k \in \no$. 
Applying Lemma \ref{lem:chirplemma}
it follows that   
\begin{equation*}
(x_0, \xi_0) \notin \WF^{t,t(m-1)} (u)
\end{equation*}
and we may conclude 
\begin{equation}\label{eq:chirpinclusion1}
\WF^{t,t(m-1)} (u) \subseteq W.
\end{equation}
In order to prove \eqref{eq:conclusion1a} for Case (i) it hence remains to strengthen the above inclusion into an equality.

If $m$ is even then $u$ is even and $W = -W$, so by \eqref{eq:evensteven1} we have either $\WF^{t,t(m-1)} (u) =\emptyset$ or $\WF^{t,t(m-1)} (u) = W$. 
The former is not true since $u \notin \Sigma_t^s(\ro)$. Thus we have proved \eqref{eq:conclusion1a} for Case (i) and $m$ even. 

If $m$ is odd then $\check u (x) = \overline{ u(x) } = e^{- i  c x^m}$. 
Again $\WF^{t,t(m-1)} (u) =\emptyset$ cannot hold since $u \notin \Sigma_t^s(\ro)$. 
If we assume that the inclusion \eqref{eq:chirpinclusion1} is strict we  get a contradiction from 
\eqref{eq:evensteven0} and \eqref{eq:evensteven2}. 
Indeed suppose e.g. 
\begin{equation*}
\WF^{t,t(m-1)} (u) 
= \{ (x, c m x^{m-1} ) \in \rr 2: \ x > 0 \}. 
\end{equation*}
By \eqref{eq:evensteven0} and \eqref{eq:evensteven2} we then get the contradiction 
\begin{align*}
\WF^{t,t(m-1)} ( \check u) 
& = \{ (x, - c m x^{m-1} ) \in \rr 2: \ x < 0 \} \\
& = \{ (x, - c m x^{m-1} ) \in \rr 2: \ x > 0 \}
= \WF^{t,t(m-1)} ( \overline{u} ).   
\end{align*}
This proves \eqref{eq:conclusion1a} for Case (i) when $m$ is odd.

Case (ii):
In this case $u(x) = e^{i c |x|^\alpha}$ is not smooth at $x=0$ which causes some problems. 
Set again $s = t(\alpha-1) > 1$, and
\begin{align*}
W = \{ (x, c \alpha \sgn(x) |x|^{\alpha-1} ) \in \rr 2: \ x \neq 0 \} \subseteq \rr 2 \setminus 0. 
\end{align*}

Suppose $(x_0, \xi_0) \in \rr 2 \setminus 0$, $(x_0, \xi_0) \notin W$ and $(x_0, \xi_0) \neq \{ 0 \} \times (\ro \setminus 0)$. 
There exists an open set $U$ such that $(x_0,\xi_0) \in U$, and $0 < 2 \delta \leqs \ep \leqs 1$,
such that 
\begin{align*}
(x,\xi) \in U, \quad  |x-y| \leqs \delta \quad \Longrightarrow 
& \quad  |\xi - c \alpha \sgn(x) |x|^{\alpha-1}| \geqs 2 \ep, \quad |x| \geqs \ep, \quad \\
& \quad \alpha \, |c| \, | \, \sgn(y) |y|^{\alpha-1} - \sgn(x) |x|^{\alpha-1}| \leqs \ep.
\end{align*}

Then if $(x,\xi) \in U$ and $|x-y| \leqs \delta$
\begin{equation}\label{eq:lowerboundfas1}
|\xi - c \alpha \sgn(y) |y|^{\alpha-1}| 
\geqs |\xi - c \alpha \sgn(x) |x|^{\alpha-1}| - \alpha \, |c| \, | \sgn(y) |y|^{\alpha-1} - \sgn(x) |x|^{\alpha-1}| \geqs \ep. 
\end{equation}

Let $\psi \in \Sigma_t^s(\ro) \setminus 0$ be such that $\supp \psi \subseteq \rB_\delta$. 
Then if $\lambda \geqs 1$, $\lambda^t(y-x) \in \supp \psi$ and $|x| \geqs \ep$ we have $|y| \geqs \ep/2$. 
From the stationary phase theorem \cite[Theorem~7.7.1]{Hormander0}
this gives for any $k \in \no$, any $h > 0$ and any $\lambda \geqs 1$, if $(x,\xi) \in U$, using \eqref{eq:lowerboundfas1}
and the final estimates in \eqref{eq:estimateWF0},
\begin{equation*}
\begin{aligned}
|V_\psi u ( \lambda^t x, \lambda^s \xi)|
& = (2 \pi)^{-\frac12} \left| \int_{\ro} e^{i (c |y|^\alpha - y \lambda^s \xi )} \overline{\psi( \lambda^t (\lambda^{-t}y-x) )} \, \dd y \right| \\
& = (2 \pi)^{-\frac12} \lambda^t \left| \int_{|y| \geqs \ep/2} e^{i \lambda^{t \alpha} (c |y|^\alpha - y \xi ) )} \overline{\psi( \lambda^t (y-x) )} \, \dd y \right| \\
& \leqs C \lambda^t \sum_{n = 0}^k \lambda^{n t} \sup_{|x-y| \leqs \delta} |(D^n\psi)( \lambda^t (y-x) )| \, |\xi - c \alpha \sgn(y)  |y|^{\alpha-1}|^{n - 2k} \lambda^{t \alpha(n-2k)} \\
& \leqs C \lambda^{t} \ep^{- 2 k} \sum_{n = 0}^k \sup_{|x-y| \leqs \delta} |(D^n\psi)( \lambda^t (y-x) )|  \lambda^{- t k (\alpha-1)} \lambda^{t(1+\alpha) (n-k)} \\
& \leqs C \lambda^{t} \ep^{- 2 k} \lambda^{- s k}  \sum_{n = 0}^k \sup_{|x-y| \leqs \delta} |(D^n\psi)( \lambda^t (y-x) )| \\
& \leqs C_{s,h}  \lambda^{t} \ep^{- 2 k} \lambda^{- s k} (2 h)^k  k!^s . 
\end{aligned}
\end{equation*}

Appealing to Lemma \ref{lem:chirplemma} it follows that
\begin{equation*}
(x_0, \xi_0) \notin \WF^{t,t(\alpha-1)} (u)
\end{equation*}
and we may conclude 
\begin{equation*}
\WF^{t,t(\alpha-1)} (u) \subseteq W \cup \{ 0 \} \times (\ro \setminus 0)
\end{equation*}
which is the right inclusion in \eqref{eq:conclusion2} for Case (ii). 

It remains to show the left inclusion in \eqref{eq:conclusion2}, that is
\begin{equation}\label{eq:conclusion2a}
\{ 0 \} \times (\ro \setminus 0) \subseteq 
\WF^{t,t(\alpha-1)} (u). 
\end{equation}

We have for $\xi > 0$
\begin{equation*}
|V_\psi u ( 0, \pm \lambda^s \xi)|
= (2 \pi)^{-\frac12} \left| \int_{\ro} e^{i (c |y|^\alpha \mp y \lambda^s \xi )} \overline{\psi( y )} \, \dd y \right| 
= |\cF( \psi \, e^{ - i c |\cdot|^\alpha } )(\mp \lambda^s \xi)|. 
\end{equation*}
Let $\psi$ be even and satisfy $\psi(0) \neq 0$. 
Then $\cF( \psi \, e^{ - i c |\cdot|^\alpha } )$ is also even. 
If we assume $(0, \xi ) \notin \WF^{t,t(\alpha-1)} (u)$ or $(0, -\xi ) \notin \WF^{t,t(\alpha-1)} (u)$ then 
\begin{equation*}
|\cF( \psi \, e^{ - i c |\cdot|^\alpha } )(\xi)|
\lesssim e^{- r |\xi|^{\frac1s}}, \quad \xi \in \ro, 
\end{equation*}
for all $r > 0$. But this implies $\psi \, e^{ - i c |\cdot|^\alpha } \in C^\infty$
which is a contradiction as $\alpha \notin 2 \no \setminus 0$ and $\psi(0) \neq 0$.  
This shows \eqref{eq:conclusion2a} and thus \eqref{eq:conclusion2} for Case (ii) has been proved. 
\end{proof}

\begin{rem}\label{rem:weakassumption}
The wave front set $\WF^{t,t(\alpha-1)} (u)$
is well defined if $t + t(\alpha-1) = t \alpha > 1$ for $u \in (\Sigma_t^{t(\alpha-1)})' (\ro)$. 
If we weaken the assumption $t > \frac{1}{m-1}$ ($t > \frac{1}{\alpha-1}$) into $t > \frac1m$ ($t > \frac{1}{\alpha}$)
in Theorem \ref{thm:chirpWFst}, 
then we obtain from Theorem \ref{thm:chirpWFst} and Remark \ref{rem:WFinclusion} 
if $m \in \no \setminus \{ 0, 1 \}$ 
\begin{equation}\label{eq:conclusion1c}
\{ (x, c \, m x^{m-1} ) \in \rr 2: \ x \neq 0 \} 
\subseteq \WF^{t,t(m-1)} (u) 
\end{equation}
and if $\alpha \in \ro \setminus 2 \no$, $\alpha > 1$ 
\begin{equation}\label{eq:conclusion2c}
\{ 0 \} \times (\ro \setminus 0) \subseteq \WF^{t,t(\alpha-1)} (u).  
\end{equation}
Thus \eqref{eq:conclusion1c} has been weakened into an inclusion instead of the equality \eqref{eq:conclusion1a},  
and \eqref{eq:conclusion2c} gives a lower bound only as compared to \eqref{eq:conclusion2}. 
\end{rem}

\begin{rem}\label{rem:fourierchirp}
The Fourier transform $\widehat u$ of a chirp \eqref{eq:chirpdef1} with $m \in \no \setminus \{ 0, 1 \}$ is known explicitly for $m = 2$. 
It is $\wh u(\xi) = (2 |c|)^{-\frac12} e^{i \frac{\pi}{4} \sgn(c)} e^{- \frac{i}{4c} \xi^2}$
\cite[Theorem~7.6.1]{Hormander0}. 
For larger $m$ one has $\widehat u \in \cS'(\ro)$. 
From the discussion concerning the Airy function ($m=3$, $c = \frac13$) \cite[Chapter~7.6]{Hormander0}
it can be seen that $\wh u$ is actually real analytic provided $m$ is odd, and extends to an entire function on $\co$. 
But if $m$ is even it seems difficult to obtain 
explicit information about $\widehat u$. 
Nevertheless, combining Theorem \ref{thm:chirpWFst} with Proposition \ref{prop:WFstsymplectic}, 
we obtain the following identity for its anisotropic Gelfand--Shilov wave front set when $t > \frac{1}{m-1}$: 
\begin{equation*}
\WF^{t(m-1),t} (\wh u) 
= \{ ( (-1)^{m-1} c m x^{m-1}, x ) \in \rr 2: \ x \neq 0 \}. 
\end{equation*}

If $m = 3$ and $c = 1/3$ then $u (x) = e^{i x^3/3}$ and 
$v (\xi) = (2 \pi)^{\frac12} \cF^{-1} u(\xi) = (2 \pi)^{\frac12} \wh u (-\xi)$ is the Airy function \cite{Hormander0}. 
Using \eqref{eq:evensteven0} we conclude
\begin{equation*}
\WF^{2t,t} (v) 
= -\WF^{2t,t} ( \wh u ) 
= \{ ( - x^2, x ) \in \rr 2: \ x \neq 0 \}
\end{equation*}
when $t > \frac12$. 
\end{rem}

We would also like to determine $\WF^{t,s} (u)$ when $s \neq t (\alpha-1)$ for the chirp functions. 
The following two results treat this question and show that $\WF^{t,s} (u)$ does not give a meaningful result then. 

\begin{prop}\label{prop:chirpnegative1}
Suppose 
$c \in \ro \setminus 0$. 

\begin{enumerate}[\rm (i)]

\item If $u$ is defined by \eqref{eq:chirpdef1} and $s > t (m-1) > 1$ then 
\begin{equation}\label{eq:conclusion3a}
\WF^{t,s} (u) = (\ro \setminus 0) \times \{ 0 \}. 
\end{equation}

\item If $u$ is defined by \eqref{eq:chirpdef2} and $s > t (\alpha-1) > 1$ then
\begin{equation}\label{eq:conclusion4}
\{ 0 \} \times (\ro \setminus 0) \subseteq 
\WF^{t,s} (u) 
\subseteq (\ro \setminus 0) \times \{ 0 \} 
\cup \{ 0 \} \times (\ro \setminus 0) . 
\end{equation}

\end{enumerate}
\end{prop}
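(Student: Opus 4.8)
The plan is to follow the scheme of the proof of Theorem \ref{thm:chirpWFst}, exploiting that the hypothesis $s > t(m-1)$ (respectively $s > t(\alpha-1)$) forces the \emph{linear} term of the phase to dominate along every admissible curve, so that the only surviving singularities are those produced by a vanishing frequency or, in Case (ii), by the non-smoothness of $u$ at the origin. For the upper bound in Case (i) I would fix $(x_0,\xi_0)$ with $\xi_0\neq0$ and a bounded open neighbourhood $U$ on which $|\xi|\geqs\ep$, and start from
\[
V_\psi u(\lambda^t x,\lambda^s\xi)=(2\pi)^{-\frac12}\lambda^t\int_\ro e^{i(c\lambda^{mt}z^m-\lambda^{t+s}z\xi)}\overline{\psi(\lambda^t(z-x))}\,\dd z,
\]
with $\psi\in\Sigma_t^s(\ro)\setminus0$ of small support. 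Factoring out $\lambda^{t+s}$ one writes the phase as $\lambda^{t+s}\phi_\lambda$, $\phi_\lambda(z)=c\lambda^{-\beta}z^m-z\xi$, $\beta=s-t(m-1)>0$. Since $z$ stays bounded on the support, $|\phi_\lambda'(z)|=|cm\lambda^{-\beta}z^{m-1}-\xi|\geqs\ep/2$ for all $\lambda\geqs L$ with $L$ large, because $\lambda^{-\beta}\to0$. Repeating the stationary phase computation \cite[Theorem~7.7.1]{Hormander0} as in \eqref{eq:estimateWF0}, but with large parameter $\lambda^{t+s}$ in place of $\lambda^{mt}$ and with the factorial bounds from \eqref{eq:expestimate0}, the weight $|\phi_\lambda'|^{n-2k}\leqs(\ep/2)^{n-2k}$ supplies the $\ep^{-2k}$ needed, and the bookkeeping $\lambda^{nt}\lambda^{(t+s)(n-2k)}\leqs\lambda^{-sk}$ for $0\leqs n\leqs k$ yields
\[
\lambda^{sk}\ep^{2k}|V_\psi u(\lambda^t x,\lambda^s\xi)|\leqs C_h\lambda^t h^k k!^s,\qquad(x,\xi)\in U,
\]
for all $h>0$, $\lambda\geqs1$, $k\in\no$. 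Lemma \ref{lem:chirplemma} then gives super-exponential decay, so $(x_0,\xi_0)\notin\WF^{t,s}(u)$, i.e. $\WF^{t,s}(u)\subseteq(\ro\setminus0)\times\{0\}$.

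For the reverse inclusion in Case (i) I would argue indirectly, rather than by a lower bound on the integral. Since $u\notin\Sigma_t^s(\ro)$ we have $\WF^{t,s}(u)\neq\emptyset$, and by the scaling invariance \eqref{eq:WFstscalinv} the set $\WF^{t,s}(u)\subseteq(\ro\setminus0)\times\{0\}$ is a union of the two scaling half-axes $\{x>0\}$ and $\{x<0\}$. When $m$ is even, $u$ is even and \eqref{eq:evensteven1} gives $\WF^{t,s}(u)=-\WF^{t,s}(u)$, which interchanges the two half-axes; when $m$ is odd, $\check u=\overline u$ and the combination of \eqref{eq:evensteven0} and \eqref{eq:evensteven2} imposes the same $x\mapsto-x$ symmetry on the space axis, exactly as in the parity discussion in the proof of Theorem \ref{thm:chirpWFst}. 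A non-empty, scaling-invariant, reflection-symmetric subset of $(\ro\setminus0)\times\{0\}$ is the whole space axis, which proves \eqref{eq:conclusion3a}.

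Case (ii) follows the same pattern. For the upper bound I restrict to $(x_0,\xi_0)$ with both $x_0\neq0$ and $\xi_0\neq0$, choosing $U$ with $|x|\geqs\ep$ and $|\xi|\geqs\ep$; then $|z|\geqs\ep/2$ on the support, the phase $c|z|^\alpha$ is smooth there, and the identical factor-$\lambda^{t+s}$ argument (now with $\beta=s-t(\alpha-1)>0$) delivers the hypothesis of Lemma \ref{lem:chirplemma}, placing such points outside $\WF^{t,s}(u)$; this gives the right inclusion in \eqref{eq:conclusion4}. The left inclusion $\{0\}\times(\ro\setminus0)\subseteq\WF^{t,s}(u)$ is obtained verbatim as in \eqref{eq:conclusion2a}: evaluating on the frequency axis yields $|V_\psi u(0,\pm\lambda^s\xi)|=|\cF(\psi\,e^{-ic|\cdot|^\alpha})(\mp\lambda^s\xi)|$, and membership of $(0,\pm\xi)$ in the complement of the wave front set would force $\psi\,e^{-ic|\cdot|^\alpha}\in C^\infty$, contradicting $\alpha\notin2\no$ and $\psi(0)\neq0$.

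The main obstacle is the upper-bound estimate. With the large parameter now $\lambda^{t+s}$ and a $\lambda$-dependent phase $\phi_\lambda$ whose higher derivatives satisfy $\phi_\lambda^{(j)}=O(\lambda^{-\beta})$ for $j\geqs2$, one must verify that the integration by parts produces precisely the power $\lambda^{-sk}$ (so that the weight $\ep^{2k}\lambda^{sk}$ of Lemma \ref{lem:chirplemma} is matched) and that the terms differentiating $1/\phi_\lambda'$ stay subdominant, the factor $\lambda^{-\beta}$ being exactly what guarantees this. The indirect symmetry argument for the space axis in Case (i) is the other delicate point, since a direct computation along that axis shows only sub-exponential decay and so cannot by itself decide membership in the wave front set.
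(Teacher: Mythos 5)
Your proposal is correct and follows essentially the same route as the paper's own proof: the same non-stationary phase estimate with large parameter $\lambda^{t+s}$ and the perturbation $c\lambda^{t(m-1)-s}z^m$ killed by $s>t(m-1)$, the same reduction to Lemma \ref{lem:chirplemma}, the same parity/scaling argument (via \eqref{eq:evensteven0}, \eqref{eq:evensteven1}, \eqref{eq:evensteven2} and $u\notin\Sigma_t^s(\ro)$) to upgrade the inclusion to equality in Case (i), and the same frequency-axis non-smoothness argument for the left inclusion in Case (ii). No gaps to report.
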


\begin{proof}
Case (i):
Suppose $(x_0, \xi_0) \in \rr 2$ and $\xi_0 \neq 0$. 
There exists $U \subseteq \rr {2}$ such that $(x_0, \xi_0) \in U$, 
and $0 < \ep \leqs 1$, 
$L \geqs 1$ such that 
\begin{equation*}
| \xi - c m \lambda^{ t (m-1) - s} y^{m-1}| \geqs \ep
\end{equation*}
when $(x,\xi) \in U$, $| x - y| \leqs 1$ and $\lambda \geqs L$, due to the assumption $t(m-1) - s < 0$. 

Let $\psi \in \Sigma_t^s(\ro) \setminus 0$ be such that $\supp \psi \subseteq \rB_1$. 
From the stationary phase theorem \cite[Theorem~7.7.1]{Hormander0}
we have for any $k \in \no$, any $h > 0$ and any $\lambda \geqs L$, if $(x,\xi) \in U$, 
again using \eqref{eq:estimateWF0}, 
\begin{align*}
|V_\psi u ( \lambda^t x, \lambda^s \xi)|
& = (2 \pi)^{-\frac12} \left| \int_{\ro} e^{i (c y^{m} - y \lambda^s \xi )} \overline{\psi( \lambda^t (\lambda^{-t} y-x) )} \, \dd y \right| \\
& = (2 \pi)^{-\frac12} \lambda^t \left| \int_{\ro} e^{i \lambda^{t+s} ( \lambda^{t (m - 1)-s} c y^{m} - y \xi ) )} \overline{\psi( \lambda^t (y-x) )} \, \dd y \right| \\
& \leqs C \lambda^t \sum_{n = 0}^k  \lambda^{nt} \sup_{|x-y| \leqs 1} |(D^n\psi)( \lambda^t (y-x) )| \, |\xi - c m \lambda^{t ( m - 1)-s} y^{m-1}|^{n - 2k} \lambda^{(t+s)(n-2k)} \\
& \leqs C \lambda^{t} \ep^{-2 k} \sum_{n = 0}^k \sup_{|x-y| \leqs 1} |(D^n\psi)( \lambda^t (y-x) )|  \lambda^{- s k} \lambda^{ s (n-k) + 2t( n - k)} \\
& \leqs C \lambda^{t} \ep^{-2 k} \lambda^{- s k} \sum_{n = 0}^k \sup_{|x-y| \leqs 1} |(D^n\psi)( \lambda^t (y-x) )| \\
& \leqs C_{s,h}  \lambda^{t} \ep^{- 2 k} \lambda^{- s k} (2 h)^k  k!^s . 
\end{align*}
Lemma \ref{lem:chirplemma} gives 
\begin{equation*}
\WF^{t,s} (u) \subseteq  (\ro \setminus 0) \times \{ 0 \} 
\end{equation*}
which shows the inclusion ``$\subseteq$'' in \eqref{eq:conclusion3a}. 
Equality in \eqref{eq:conclusion3a}
again follows from \eqref{eq:evensteven0}, \eqref{eq:evensteven2}, $u \notin \Sigma_t^s(\ro)$, 
and $\check u = \overline u$ if $m$ is odd.

Case (ii):
Suppose $(x_0, \xi_0) \in \rr 2$, $x_0 \neq 0$ and $\xi_0 \neq 0$. 
Then there exists $U \subseteq \rr {2d}$ such that $(x_0, \xi_0) \in U$, 
and $0 < \ep \leqs 1$, 
$L \geqs 1$,
such that
\begin{equation*}
\inf_{(x,\xi) \in U}  |x| = \ep
\end{equation*}
and
\begin{equation*}
| \xi - c \alpha \sgn(y) \lambda^{ t (\alpha-1) - s} |y|^{\alpha-1}| \geqs \ep
\end{equation*}
when $(x,\xi) \in U$, $|x - y| \leqs \ep/2$ and $\lambda \geqs L$.

Pick $\psi \in \Sigma_t^s(\ro) \setminus 0$ such that $\supp \psi \subseteq \rB_{\ep/2}$. 
Then if $\lambda \geqs L$, $\lambda^t(y-x) \in \supp \psi$ and $|x| \geqs \ep$ we have $|y| \geqs \ep/2$. 
From the stationary phase theorem \cite[Theorem~7.7.1]{Hormander0}
this gives for any $k \in \no$, any $h > 0$ and any $\lambda \geqs L$, if $(x,\xi) \in U$,
using \eqref{eq:estimateWF0},
\begin{align*}
|V_\psi u ( \lambda^t x, \lambda^s \xi)|
& = (2 \pi)^{-\frac12} \left| \int_{\ro} e^{i (c |y|^\alpha - y \lambda^s \xi )} \overline{\psi( \lambda^t (\lambda^{-t}y-x) )} \, \dd y \right| \\
& = (2 \pi)^{-\frac12} \lambda^t \left| \int_{|y| \geqs \ep/2} e^{i \lambda^{t+s} (c \lambda^{t (\alpha-1) - s} |y|^\alpha - y \xi ) )} \overline{\psi( \lambda^t (y-x) )} \, \dd y \right| \\
& \leqs C \lambda^t \sum_{n = 0}^k \lambda^{n t} \sup_{|x-y| \leqs \ep/2} |(D^n\psi)( \lambda^t (y-x) )| \, |\xi - c \alpha \sgn(y) \lambda^{t (\alpha-1) - s} |y|^{\alpha-1}|^{n - 2k} \\
& \qquad \qquad \times \lambda^{(t+s) (n-2k)} \\
& \leqs C_{s,h} \lambda^{t} \ep^{- 2k} \lambda^{- s k}  (2 h)^k k!^s. 
\end{align*}
Using Lemma \ref{lem:chirplemma} we obtain
\begin{equation*}
\WF^{t,s} (u) \subseteq  (\ro \setminus 0) \times \{ 0 \} \cup \{ 0 \} \times (\ro \setminus 0). 
\end{equation*}
Finally $\{ 0 \} \times (\ro \setminus 0) \subseteq \WF^{t,s} (u)$ follows recycling the argument at the end of the proof of 
Theorem \ref{thm:chirpWFst}. 
\end{proof}

\begin{prop}\label{prop:chirpnegative2}
Suppose 
$c \in \ro \setminus 0$. 

\begin{enumerate}[\rm (i)]

\item If $u$ is defined by \eqref{eq:chirpdef1} and $t (m-1) > s > 1$ then
\begin{equation}\label{eq:conclusion5a}
\WF^{t,s} (u) \subseteq \{ 0 \} \times (\ro \setminus 0) 
\end{equation}
and if $m$ is even then 
\begin{equation}\label{eq:conclusion5b}
\WF^{t,s} (u) = \{ 0 \} \times (\ro \setminus 0). 
\end{equation}

\item If $u$ is defined by \eqref{eq:chirpdef2} and $t (\alpha-1) > s > 1$ then
\begin{equation}\label{eq:conclusion6}
\{ 0 \} \times (\ro \setminus 0) \subseteq 
\WF^{t,s} (u) 
\subseteq (\ro \setminus 0) \times \{ 0 \} 
\cup \{ 0 \} \times (\ro \setminus 0) . 
\end{equation}

\end{enumerate}
\end{prop}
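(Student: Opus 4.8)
The plan is to prove each inclusion in \eqref{eq:conclusion5a}--\eqref{eq:conclusion6} separately, handling the upper bounds by a non-stationary phase estimate modeled on \eqref{eq:estimateWF0} and the lower bounds by soft arguments (non-emptiness together with the invariances in Case~(i), a smoothness contradiction in Case~(ii)). The decisive structural difference from Proposition~\ref{prop:chirpnegative1} is the sign of the exponent $t(m-1)-s$ (resp.\ $t(\alpha-1)-s$): here it is \emph{positive}, so in the rescaled phase $c\lambda^{t(m-1)-s} y^m - y\xi$ the monomial term dominates the linear term $y\xi$ for large $\lambda$, rather than the reverse. Consequently the phase has no critical point once $y$ is kept away from the origin, and the roles of the space and frequency variables are interchanged relative to Proposition~\ref{prop:chirpnegative1}: we now require $x_0\neq 0$ (instead of $\xi_0\neq 0$) to produce super-exponential decay.

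For the upper bound in Case~(i), I would take $(x_0,\xi_0)$ with $x_0\neq 0$ and choose an open $U\ni(x_0,\xi_0)$ together with $0<\ep\leqs 1$ so that $|x|\geqs\ep$ and $|\xi|$ stays bounded on $U$. Pick $\psi\in\Sigma_t^s(\ro)\setminus 0$ with $\supp\psi\subseteq\rB_\delta$, $\delta\leqs\ep/2$, which exists since $s>1$; then $\lambda^t(y-x)\in\supp\psi$ forces $|y|\geqs\ep/2$ for $\lambda\geqs 1$. Because $t(m-1)-s>0$, for $\lambda$ beyond some $L\geqs 1$ the quantity $|c|\,m\,\lambda^{t(m-1)-s}|y|^{m-1}$ exceeds $|\xi|+\ep$, whence $|\xi - cm\lambda^{t(m-1)-s}y^{m-1}|\geqs\ep$. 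Feeding this lower bound into the stationary phase computation \eqref{eq:estimateWF0} (with the substitution $y\mapsto\lambda^t y$ as there) reproduces, after collecting the powers of $\lambda$ into the exponent $t-sk$, the estimate $\lambda^{sk}\ep^{2k}|V_\psi u(\lambda^t x,\lambda^s\xi)|\leqs C_h\lambda^t h^k k!^s$ for all $h>0$, $k\in\no$ and $\lambda\geqs L$. Lemma~\ref{lem:chirplemma} then yields $(x_0,\xi_0)\notin\WF^{t,s}(u)$, proving \eqref{eq:conclusion5a}. Case~(ii) runs identically except that one also assumes $\xi_0\neq 0$ and chooses $\psi$ supported so small that $|y|\geqs\ep/2$ avoids the singularity of $|x|^\alpha$ at the origin, exactly as in Proposition~\ref{prop:chirpnegative1}(ii); this gives the right-hand inclusion in \eqref{eq:conclusion6}.

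For the equality \eqref{eq:conclusion5b} when $m$ is even I would argue by invariance. Since $u\notin\Sigma_t^s(\ro)$ we have $\WF^{t,s}(u)\neq\emptyset$, so by \eqref{eq:conclusion5a} there is a point $(0,\xi_*)$ with $\xi_*\neq 0$ in the wave front set. The scaling invariance \eqref{eq:WFstscalinv} applied there, where $\lambda^t\cdot 0=0$ and $\lambda^s\xi_*$ sweeps out an entire open half-line as $\lambda>0$ varies, places the whole half-axis $\{0\}\times(0,\infty)$ or $\{0\}\times(-\infty,0)$ in $\WF^{t,s}(u)$; since $m$ even makes $u$ even, \eqref{eq:evensteven1} gives $\WF^{t,s}(u)=-\WF^{t,s}(u)$ and hence the complementary half-axis as well, so $\{0\}\times(\ro\setminus 0)\subseteq\WF^{t,s}(u)$ and \eqref{eq:conclusion5b} follows. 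For the left inclusion in \eqref{eq:conclusion6} I would recycle verbatim the final argument of Theorem~\ref{thm:chirpWFst}: taking $\psi$ even with $\psi(0)\neq 0$ one has $|V_\psi u(0,\pm\lambda^s\xi)|=|\cF(\psi\,e^{-ic|\cdot|^\alpha})(\mp\lambda^s\xi)|$, and super-exponential decay of the left side for some $\xi>0$ would force $\psi\,e^{-ic|\cdot|^\alpha}\in C^\infty$, contradicting $\alpha\notin 2\no$ and $\psi(0)\neq 0$; hence $\{0\}\times(\ro\setminus 0)\subseteq\WF^{t,s}(u)$. Window independence (Proposition~\ref{prop:windowinvariance}) legitimizes these special choices of $\psi$.

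The main obstacle is essentially bookkeeping: one must verify that the dominant-term lower bound $|\phi'|\geqs\ep$ holds uniformly for $\lambda\geqs L$, and that the powers of $\lambda$ arising from the $2k-n$ integrations by parts, the chain-rule factors $\lambda^{tn}$, and the oscillation weight $\lambda^{t+s}$ combine to exactly $\lambda^{t-sk}$, which is precisely what Lemma~\ref{lem:chirplemma} consumes. Since this is the computation already carried out in \eqref{eq:estimateWF0} and in Proposition~\ref{prop:chirpnegative1}, with only the sign of $t(m-1)-s$ reversed, no genuinely new difficulty arises; the conceptual content lies entirely in recognizing this sign reversal and the attendant swap of the roles of the $x$ and $\xi$ variables.
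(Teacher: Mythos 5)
Your proposal is correct and follows essentially the same route as the paper: a non-stationary-phase estimate in the spirit of \eqref{eq:estimateWF0} fed into Lemma~\ref{lem:chirplemma} for the upper bounds (the paper factors the phase as $\lambda^{tm}(cy^m-\lambda^{s-t(m-1)}y\xi)$ rather than your $\lambda^{t+s}(c\lambda^{t(m-1)-s}y^m-y\xi)$, which is an immaterial renormalization with the same power count $\lambda^{t-sk}$), the evenness/non-triviality argument via \eqref{eq:evensteven1} for \eqref{eq:conclusion5b}, and the reuse of the smoothness contradiction from Theorem~\ref{thm:chirpWFst} for the left inclusion in \eqref{eq:conclusion6}. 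Your elaboration of the scaling-invariance step for \eqref{eq:conclusion5b} correctly fills in what the paper leaves implicit.
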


\begin{proof}
Case (i):
Suppose $(x_0, \xi_0) \in \rr 2$ and $x_0 \neq 0$. 
There exists $U \subseteq \rr {2d}$ such that $(x_0, \xi_0) \in U$, 
and $0 < \ep \leqs 1$, 
$L \geqs 1$, 
such that 
\begin{equation*}
| c m  y^{m-1} - \lambda^{ s - t (m-1)} \xi| \geqs \ep
\end{equation*}
when $(x,\xi) \in U$, $| x - y| \leqs \ep$ and $\lambda \geqs L$, due to the assumption $s - t(m-1) < 0$. 

If $0 \leqs n \leqs k$ we have 
\begin{equation*}
s k + n t + t m(n-2k)
< t ( k(m-1) + n - m k ) \\
\leqs 0. 
\end{equation*}

Let $\psi \in \Sigma_t^s(\ro) \setminus 0$ be such that $\supp \psi \subseteq \rB_\ep$. 
From the stationary phase theorem \cite[Theorem~7.7.1]{Hormander0}
we have for any $k \in \no$, 
any $h > 0$ and any $\lambda \geqs L$, if $(x,\xi) \in U$, 
again reusing \eqref{eq:estimateWF0}, 
\begin{align*}
|V_\psi u ( \lambda^t x, \lambda^s \xi)|
& = (2 \pi)^{-\frac12} \left| \int_{\ro} e^{i (c y^{m} - y \lambda^s \xi )} \overline{\psi( \lambda^t (\lambda^{-t} y-x) )} \, \dd y \right| \\
& = (2 \pi)^{-\frac12} \lambda^t \left| \int_{\ro} e^{i \lambda^{t m} ( c y^{m} - \lambda^{s - t (m - 1)}  y \xi ) )} \overline{\psi( \lambda^t (y-x) )} \, \dd y \right| \\
& \leqs C \lambda^t \sum_{n = 0}^k \lambda^{n t} \sup_{|x-y| \leqs \ep} |(D^n\psi)( \lambda^t (y-x) )| \, |c m  y^{m-1} - \lambda^{s - t (m - 1)} \xi |^{n - 2k} \lambda^{t m(n-2k) } \\
& \leqs C \lambda^{t} \ep^{-2 k} \lambda^{- s k} \sum_{n = 0}^k \sup_{|x-y| \leqs \ep} |(D^n\psi)( \lambda^t (y-x) )| \\
& \leqs C_{s,h}  \lambda^{t} \ep^{- 2 k} \lambda^{- s k} (2h)^k  k!^s . 
\end{align*}

Lemma \ref{lem:chirplemma} gives
\begin{equation*}
\WF^{t,s} (u) \subseteq \{ 0 \} \times (\ro \setminus 0) 
\end{equation*}
which is \eqref{eq:conclusion5a}. 
The equality \eqref{eq:conclusion5b} when $m$ is even follows from \eqref{eq:evensteven1} 
and $u \notin \Sigma_t^s(\ro)$.

Case (ii):
Suppose $(x_0, \xi_0) \in \rr 2$, $x_0 \neq 0$ and $\xi_0 \neq 0$. 
Then there exists $U \subseteq \rr {2d}$ such that $(x_0, \xi_0) \in U$, 
and $0 < \ep \leqs 1$, $L \geqs 1$, such that 
\begin{equation*}
\inf_{(x,\xi) \in U}  |x| = \ep
\end{equation*}
and
\begin{equation*}
| \xi - c \alpha \sgn(y) \lambda^{ t (\alpha-1) - s} |y|^{\alpha-1}| \geqs \ep
\end{equation*}
when $(x,\xi) \in U$, $|x - y| \leqs \ep/2$ and $\lambda \geqs L$. 

If $n \leqs k$ then  
\begin{equation*}
s k + n t + (t+s) (n-2k) 
\leqs s k  + n t - (t+s) k 
\leqs 0. 
\end{equation*}

Let $\psi \in \Sigma_t^s(\ro) \setminus 0$ be such that $\supp \psi \subseteq \rB_{\ep/2}$. 
Then if $\lambda \geqs L$, $\lambda^t(y-x) \in \supp \psi$ and $|x| \geqs \ep$ we have $|y| \geqs \ep/2$. 
From the stationary phase theorem \cite[Theorem~7.7.1]{Hormander0}
this gives for any $k \in \no$, any $h > 0$ and any $\lambda \geqs L$, if $(x,\xi) \in U$
and the final estimates in \eqref{eq:estimateWF0},
\begin{align*}
|V_\psi u ( \lambda^t x, \lambda^s \xi)|
& = (2 \pi)^{-\frac12} \left| \int_{\ro} e^{i (c |y|^\alpha - y \lambda^s \xi )} \overline{\psi( \lambda^t (\lambda^{-t}y-x) )} \, \dd y \right| \\
& = (2 \pi)^{-\frac12} \lambda^t \left| \int_{|y| \geqs \ep/2} e^{i \lambda^{t+s} (c \lambda^{t (\alpha-1) - s} |y|^\alpha - y \xi ) )} \overline{\psi( \lambda^t (y-x) )} \, \dd y \right| \\
& \leqs C \lambda^t \sum_{n = 0}^k \lambda^{n t} \sup_{|x-y| \leqs \ep/2} |(D^n\psi)( \lambda^t (y-x) )| \, |\xi - c \alpha \sgn(y) \lambda^{t (\alpha-1) - s} |y|^{\alpha-1}|^{n - 2k} \\
& \qquad \qquad \times \lambda^{(t+s) (n-2k)} \\
& \leqs C_{s,h} \lambda^{t} \ep^{- 2k} \lambda^{- s k}  (2 h)^k k!^s. 
\end{align*}
Lemma \ref{lem:chirplemma} gives again
\begin{equation*}
\WF^{t,s} (u) \subseteq  (\ro \setminus 0) \times \{ 0 \} \cup \{ 0 \} \times (\ro \setminus 0). 
\end{equation*}
Finally $\{ 0 \} \times (\ro \setminus 0) \subseteq \WF^{t,s} (u)$ follows again using the argument at the end of the proof of 
Theorem \ref{thm:chirpWFst}. 
\end{proof}

\begin{rem}\label{rem:schrodingernonquadratic}
By using Theorem \ref{thm:chirpWFst}, Propositon \ref{prop:chirpnegative1} and Propositon \ref{prop:chirpnegative2}
we may now give a counterpart of Remark \ref{rem:propagator}, 
showing that the anisotropic wave front set turns out to be needed 
when treating Schr\"odinger propagators in the case of non-quadratic potentials. 

Consider the Cauchy problem for the anisotropic free particle equation 
in dimension $d = 1$ 
\begin{equation}\label{eq:schrodeq2}
\left\{
\begin{array}{rl}
\partial_t u(t,x) + i D_x^{m} u (t,x) & = 0, \quad m \in \no \setminus \{ 0, 1 \}, \\
u(0,\cdot) & = u_0.  
\end{array}
\right.
\end{equation}
The Hamilton flow, along which we expect propagation of microlocal singularities, is given by
\begin{equation}\label{eq:hamiltonflow1}
(x,\xi) = \chi_t (x_0, \xi_0)
= (x_0 + m t \xi_0^{m-1}, \xi_0), \quad t \in \ro,
\end{equation}
and we are looking for parameters $v,s > 0$ such that $v + s > 1$ and  
\begin{equation}\label{eq:propagationschrodnonq1}
\WF^{v,s} (e^{- i t D_x^{m}} u_0 ) = \chi_t (\WF^{v,s} (u_0) ) . 
\end{equation}

The explicit solution to \eqref{eq:schrodeq2} is given by
\begin{equation}\label{eq:solutionschrod2}
u (t,x) 
= e^{- i t D_x^{m}} u_0 
= (2 \pi)^{- \frac12} \int_{\ro} e^{i x \xi - i t \xi^{m}} \wh u_0 (\xi) \dd \xi. 
\end{equation}

For simplicity let us test \eqref{eq:propagationschrodnonq1} on the case $u_0 = \delta_0$, 
and denote by $w_t$ the solution to \eqref{eq:schrodeq2}. 
It is easy to prove that 
\begin{equation*}
\WF^{v,s} ( w_0 ) = \WF^{v,s} ( \delta_0 ) = \{ 0 \} \times (\ro \setminus 0)
\end{equation*}
for any $v,s > 0$ with $v + s > 1$, cf. \eqref{eq:diracWF} for $v = s$ and Proposition \ref{prop:WFstelementary}, 
and from \eqref{eq:solutionschrod2}
\begin{equation}\label{eq:solutionschrod3}
\wh w_t (\xi) = (2 \pi)^{- \frac12}  e^{-i t \xi^{m}}.
\end{equation}
Hence from \eqref{eq:hamiltonflow1} and \eqref{eq:propagationschrodnonq1} we expect
\begin{equation}\label{eq:propagationschrodnonq2}
\WF^{v,s} ( w_t ) = \chi_t ( \{ 0 \} \times (\ro \setminus 0) ) = \{ (m t \xi^{m-1}, \xi) \in \rr 2, \ \xi \neq 0 \}. 
\end{equation}

This shows that the correct choice is $v = s (m-1) > 1$, $s > 0$. 
In fact from Theorem \ref{thm:chirpWFst} applied to \eqref{eq:solutionschrod3} we have for $v (m-1) > 1$ and $t \neq 0$
\begin{equation}\label{eq:solutionschrod4}
\WF^{v, v (m-1)} (\wh w_t) = \{ (\xi, - m t \xi^{m-1} ) \in \rr 2, \ \xi \neq 0 \}
\end{equation}
and hence, in view of Proposition \ref{prop:WFstsymplectic}, swapping the roles of $s$ and $v$, 
\begin{equation}\label{eq:solutionschrod5}
\WF^{s(m-1),s} (w_t) = \{ (m t \xi^{m-1}, \xi ) \in \rr 2, \ \xi \neq 0 \}
\end{equation}
if $s (m-1) > 1$, 
as expected from \eqref{eq:propagationschrodnonq2}. 

Other choices of $v > 0$ do not work. In fact by applying Proposition \ref{prop:chirpnegative1}
to \eqref{eq:solutionschrod3} we have if $v > s(m-1) > 1$
\begin{equation*}
\WF^{s, v} ( \wh w_t) = (\ro \setminus 0) \times \{ 0 \} 
\end{equation*}
and hence
\begin{equation*}
\WF^{v,s} ( w_t) =  \{ 0 \} \times (\ro \setminus 0)
\end{equation*}
for every $t \in \ro$. 

Whereas by applying Proposition \ref{prop:chirpnegative2}
to \eqref{eq:solutionschrod3} we have if $1 < v < s(m-1)$, 
in particular if $v = s > 1$, we obtain
\begin{equation*}
\WF^{s, v} ( \wh w_t ) \subseteq \{ 0 \} \times (\ro \setminus 0), 
\end{equation*}
hence
\begin{equation*}
\WF^{v,s} ( w_t ) \subseteq  (\ro \setminus 0) \times \{ 0 \}, \quad t \neq 0.
\end{equation*}
(These inclusions are equalities if $m$ is even.)
This is not consistent with \eqref{eq:propagationschrodnonq2}. 
\end{rem}

\begin{rem}\label{rem:comment}
\textit{Addendum at revision.}
After finishing this work we have proved a generalization of the conjecture \eqref{eq:propagationschrodnonq2} with $v = s(m-1) > 1$, 
see \cite[Theorem~7.1]{Wahlberg4}.
\end{rem}

\section{Relations between the $t,s$-Gelfand--Shilov wave front set and the $s$-Gevrey wave front set}\label{sec:GSGevrey}

Next we show a few results that are valid when $s > 1$. 
Then Gevrey functions of order $s$ and of compact support exist \cite{Rodino1}. 
We define Gevrey functions of order $s > 1$ slightly differently from \cite{Rodino1}, using again Beurling instead of Roumieu type. 
Let $\Omega \subseteq \rr d$ be open. Then $f \in G^s(\Omega)$ provided $f \in C^\infty(\Omega)$
and for each compact $K \subseteq \Omega$ we have
\begin{equation*}
|\pd \alpha f (x)| \leqs C_{K,h} h^{|\alpha|} \alpha!^s, \quad x \in K, \quad \alpha \in \nn d, \quad \forall h > 0. 
\end{equation*}
The topology on $G^s(\Omega)$ is defined first as the projective limit with respect to $h > 0$, and then as the inductive limit with respect to an exhaustive increasing sequence of compact sets $K \subseteq \Omega$.
In the sequel we limit attention to $\Omega = \rr d$. 

The space of compactly supported Gevrey functions is embedded in the usual test function space as 
$G_c^s(\rr d) \subseteq C_c^\infty(\rr d)$. 
The topological duals therefore satisfy the embedding $\cD'(\rr d) \subseteq \cD_s'(\rr d)$
where $\cD_s'(\rr d)$ is the space of Gevrey ultradistributions of order $s > 1$. 

With small modifications of the proof of \cite[Theorem~1.6.1]{Rodino1} we obtain that for $f \in C_c^\infty(\rr d)$ we have $f \in G_c^s(\rr d)$ if and only if the Fourier transform satisfies
\begin{equation*}
|\wh f(\xi)| \lesssim e^{- r |\xi|^{\frac1s}} \quad \forall r > 0. 
\end{equation*}
Denoting $\cE_s' (\rr d)$
the subspace of $\cD_s'(\rr d)$ of ultradistributions of compact support, 
we also have $f \in \cE_s' (\rr d)$ if and only if 
\begin{equation*}
\exists r > 0: \quad |\wh f(\xi)| \lesssim e^{r |\xi|^{\frac1s}}
\end{equation*}
cf. \cite{Komatsu1,Sobak1} and \cite[Theorems~1.6.1 and 1.6.7]{Rodino1}. 

This is the basis of the definition of the Gevrey wave front set $\WF_s( u)$ of $u \in \cD_s'(\rr d)$ \cite{Rodino1}. 
A phase space point $(x_0, \xi_0) \in \rr d \times (\rr d \setminus 0)$ 
satisfies $(x_0, \xi_0) \notin \WF_s( u)$ if there exists $\fy \in G_c^s(\rr d)$ such that $\fy(x_0) = 1$
and an open conical neighborhood $\Gamma \subseteq \rr d \setminus 0$ containing $\xi_0$ such that 
\begin{equation*}
\sup_{\xi \in \Gamma} e^{r |\xi|^{\frac1s}} |\wh{ u \fy} (\xi)| < \infty  \quad \forall r > 0. 
\end{equation*}

Hence $\WF_s( u) = \emptyset$ if and only if $u \in G^s(\rr d)$. 
Note that for every $s > 1$ and any $t > 0$ we have
\begin{align*}
& G_c^s(\rr d) \subseteq \Sigma_t^s(\rr d) \subseteq G^s(\rr d), \\
& \cE_s' (\rr d) \subseteq ( \Sigma_t^s )' (\rr d) \subseteq \cD_s'(\rr d). 
\end{align*}

Inspired by the proofs in \cite{Wahlberg2} we obtain the following results. 
Here $\pi_2(x,\xi) = \xi$ for $(x,\xi) \in T^* \rr d$. 

\begin{prop}\label{prop:WFGevreyWFst}
If $t \geqs s > 1$ and 
$u \in (\Sigma_t^s)'(\rr d)$ then 
\begin{equation*}
\{ 0 \} \times \pi_2 \WF_s(u) \subseteq \WF^{t,s}(u). 
\end{equation*}
\end{prop}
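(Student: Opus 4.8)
The plan is to argue by contraposition. Fixing $\xi_0 \neq 0$ with $(0,\xi_0) \notin \WF^{t,s}(u)$, I would show that $(x_1,\xi_0) \notin \WF_s(u)$ for \emph{every} $x_1 \in \rr d$, which is exactly the statement $\xi_0 \notin \pi_2 \WF_s(u)$ once $x_1$ ranges over all of $\rr d$.

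Since $s>1$ there exist compactly supported Gevrey functions of order $s$; I would choose $\varphi_0 \in G_c^s(\rr d)$ with $\supp \varphi_0$ near the origin and $\varphi_0(0) = 1$, and set $\varphi = T_{x_1}\varphi_0 \in G_c^s(\rr d)$, so that $\varphi(x_1) = 1$. Because $G_c^s(\rr d) \subseteq \Sigma_t^s(\rr d)$, the function $\overline{\varphi_0}$ is a legitimate Gelfand--Shilov window. The key algebraic identity, coming from $T_x T_a = T_{x+a}$ together with the relation $V_\psi u(x,\xi) = \cF(u\, T_x \overline\psi)(\xi)$, is
\begin{equation*}
\wh{u\varphi}(\xi) = V_{\overline{\varphi_0}} u(x_1,\xi), \qquad \xi \in \rr d,
\end{equation*}
with no accompanying modulation factor. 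Thus the localized Fourier transform entering the Gevrey wave front set is a single value of the STFT at the fixed point $x_1$ with window $\overline{\varphi_0}$. By the window independence of Definition \ref{def:wavefrontGFst} (Proposition \ref{prop:windowinvariance}) I may take $\psi = \overline{\varphi_0}$, so that $(0,\xi_0) \notin \WF^{t,s}(u)$ furnishes an open $U \ni (0,\xi_0)$ for which \eqref{eq:notinWFGFst1} holds.

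Next I would feed the point $(x_1,\xi)$ into the anisotropic decay estimate. Shrinking $U$ to a product $\rB_\rho(0) \times V_0$ with $V_0$ an open neighborhood of $\xi_0$ avoiding the origin, and given a target frequency $\xi$ whose direction is close to that of $\xi_0$, I set $\lambda = (|\xi|/|\xi_0|)^{1/s}$ and $\xi' = \lambda^{-s}\xi$, so that $|\xi'| = |\xi_0|$, $\xi'$ points in the direction of $\xi$, and hence $\xi' \in V_0$ provided $\xi$ lies in a suitable open cone $\Gamma \ni \xi_0$. With $x = \lambda^{-t}x_1$ one has $(\lambda^t x, \lambda^s \xi') = (x_1,\xi)$ and
\begin{equation*}
|x| = \left( \frac{|\xi_0|}{|\xi|} \right)^{t/s} |x_1| \longrightarrow 0 \quad \text{as } |\xi| \to \infty,
\end{equation*}
so $x \in \rB_\rho(0)$, i.e. $(x,\xi') \in U$, once $|\xi| \geqs M$ for $M$ large. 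Applying \eqref{eq:notinWFGFst1} along this curve gives, for every $r>0$,
\begin{equation*}
|V_{\overline{\varphi_0}} u(x_1,\xi)| \leqs C_r\, e^{-r\lambda} = C_r\, e^{-r |\xi_0|^{-1/s} |\xi|^{1/s}}, \qquad \xi \in \Gamma, \ |\xi| \geqs M.
\end{equation*}
Since $r$ is arbitrary this is super-exponential decay of $\wh{u\varphi}$ of order $1/s$ in $\Gamma$; the bounded part $\{\xi \in \Gamma : |\xi| < M\}$ contributes nothing, as $V_{\overline{\varphi_0}}u(x_1,\cdot)$ is continuous there. Hence $(x_1,\xi_0) \notin \WF_s(u)$, and since $x_1$ was arbitrary, $\xi_0 \notin \pi_2 \WF_s(u)$, completing the contrapositive.

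The step I expect to require the most care is the clean reduction $\wh{u\varphi}(\xi) = V_{\overline{\varphi_0}}u(x_1,\xi)$: one must track the conjugate-linear pairing convention and the window translation precisely to rule out a spurious modulation factor, and confirm that the Gevrey-ultradistribution product $u\varphi \in \cE_s'(\rr d)$ and the Gelfand--Shilov pairing $(u, M_\xi T_{x_1}\overline{\varphi_0})$ compute the same Fourier transform (legitimate because $u \in (\Sigma_t^s)'(\rr d) \subseteq \cD_s'(\rr d)$ and $\varphi \in G_c^s(\rr d) \subseteq \Sigma_t^s(\rr d)$). The geometric heart of the argument is then routine: since $x_1$ is fixed while $\lambda^{-t}x_1 \to 0$, the fixed position is eventually swept into $U$ under the anisotropic rescaling, which is precisely what lets the frequency-axis hypothesis on $\WF^{t,s}$ control the purely frequency-directed Gevrey decay.
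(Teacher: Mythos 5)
Your proposal is correct and follows essentially the same route as the paper: contraposition, a compactly supported Gevrey window (available since $s>1$) so that $\wh{u\varphi}(\xi)=V_{\overline{\varphi_0}}u(x_1,\xi)$, and the rescaling $\lambda=(|\xi|/|\xi_0|)^{1/s}$ under which the fixed spatial point $x_1$ is swept into a small neighborhood of $0$ because $\lambda^{-t}x_1\to 0$. The only cosmetic difference is that the paper normalizes $|\xi_0|=1$ and uses $t\geqs s$ to get the explicit truncation threshold $\delta^{-1}$ for the cone, whereas you absorb this into a large constant $M$; both are fine.
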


\begin{proof}
Suppose $\xi_0 \in \rr d \setminus 0$ and $(0,\xi_0) \notin \WF^{t,s}(u)$. 
By \eqref{eq:WFstscalinv} we may assume that $|\xi_0| = 1$. 
Let $\fy \in G_c^s(\rr d) \subseteq \Sigma_t^s (\rr d)$ satisfy $\fy (0) = 1$.  
We have for some $\ep > 0$, for any $r > 0$
\begin{equation*}
e^{r \lambda} |V_\fy u ( \lambda^t x, \lambda^s (\xi_0 + \xi) )|  \leqs C_r < \infty
\end{equation*}
if $(x,\xi) \in \rB_\ep$ and $\lambda > 0$. 
Define the open set
\begin{equation*}
\Gamma = \{ ( \lambda^t x, \lambda^s (\xi_0 + \xi) ) \in \rr {2d}: \, (x,\xi) \in \rB_\ep, \, \lambda > 0 \} \subseteq \rr {2d}. 
\end{equation*}

We have to show that $(x_0,\xi_0) \notin \WF_s(u)$ for all $x_0 \in \rr d$. 
Let $x_0 \in \rr d$.
Define for $\delta > 0$ the open conic set containing $\xi_0$
\begin{equation*}
\Gamma_{\delta} = \left\{ \xi \in \rr d \setminus 0: \, \left| \frac{\xi}{|\xi|} - \xi_0 \right| < \delta \right\} \subseteq \rr d \setminus 0. 
\end{equation*}

Pick $\delta > 0$ sufficiently small so that $\delta (1 + |x_0|) \leqs 1$ and 
\begin{equation*}
\delta^2 \left(1 +  \frac{| x_0 |^2}{(1 - \delta |x_0| )^2}  \right) \leqs \ep^2.
\end{equation*}

Then we have 
\begin{equation}\label{eq:gammainclusion1}
(\{ x_0 \} \times \Gamma_{\delta}) \setminus \rB_{\delta^{-1}} \subseteq \Gamma. 
\end{equation}
In fact let $\eta \in \Gamma_{\delta}$ and $|(x_0, \eta)| \geqs \delta^{-1}$. 
Then $|\eta| \geqs \delta^{-1}  -  |x_0| \geqs 1$. 
We write for $\lambda > 0$
\begin{equation*}
(x_0, \eta) = ( \lambda^t x, \lambda^s (\xi_0 + \xi) )
\end{equation*}
that is $x = \lambda^{-t} x_0$ and $\xi = \lambda^{-s} \eta - \xi_0$. 
In order to show \eqref{eq:gammainclusion1} we have to show that $(x,\xi) \in \rB_\ep$ for some $\lambda > 0$. 

If we set $\lambda = |\eta|^{\frac1s} > 0$ then $|\xi| < \delta$ and we obtain using the assumption $t \geqs s$
\begin{align*}
| x |^2 + | \xi |^2
& < \lambda^{-2t} |x_0|^2 + \delta^2
= |\eta|^{- \frac{2t}s} |x_0|^2 + \delta^2
\leqs |\eta|^{-2} |x_0|^2 + \delta^2 \\
& \leqs (  \delta^{-1}  -  |x_0| )^{-2} |x_0|^2 +  \delta^2
=  \delta^2 \left( 1 + \frac{|x_0|^2}{(1 - \delta |x_0|)^2} \right)
\leqs \ep^2. 
\end{align*}
Thus $(x,\xi) \in \rB_\ep$ and we have shown \eqref{eq:gammainclusion1}. 

Finally let $\eta \in \Gamma_{\delta}$ and
$|\eta| \geqs \delta^{-1} + |x_0|$, which implies $|(x_0, \eta)| \geqs \delta^{-1}$. 
By \eqref{eq:gammainclusion1} we have $(x_0, \eta) \in \Gamma$, that is 
$(x_0, \eta) = (\lambda^t x, \lambda^s (\xi_0 + \xi) )$ for some $\lambda > 0$ and some $(x,\xi) \in \rB_\ep$. 
Since
\begin{equation*}
|\eta|^{\frac1s} = 
\lambda |\xi_0 + \xi|^{\frac1s} 
\leqs \lambda \kappa (s^{-1}) \left( |\xi_0|^{\frac1s} + \ep^{\frac1s} \right)
\end{equation*} 
we obtain for any $r > 0$
\begin{align*}
\sup_{\eta \in \Gamma_{\delta}, \ |\eta| \geqs \delta^{-1} + |x_0|} e^{r | \eta |^{\frac1s}}  |V_\fy u(x_0,\eta)| 
& \leqs \sup_{(x,\xi) \in \rB_\ep, \ \lambda > 0} e^{ \lambda r \kappa (s^{-1}) \left( |\xi_0|^{\frac1s} + \ep^{\frac1s} \right)} |V_\fy u( \lambda^t x, \lambda^s (\xi_0 + \xi) )| \\
\leqs C_{ r \kappa (s^{-1}) ( |\xi_0|^{\frac1s} + \ep^{\frac1s})}
\end{align*} 
which shows that $(x_0,\xi_0) \notin \WF_s(u)$. 
\end{proof}

The following result gives a sufficient condition for the opposite inclusion. 

\begin{prop}\label{prop:WFsfreqaxis}
If $s > 1$, $t > 0$
and $u \in 
\cE_s'(\rr d) + \Sigma_t^s(\rr d)$ then 
\begin{equation}\label{eq:WFstinclusion}
\WF^{t,s}(u) \subseteq \{ 0 \} \times \pi_2 \WF_s(u). 
\end{equation}
\end{prop}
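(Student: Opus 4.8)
The plan is to reduce to the compactly supported case and then split according to whether the base point lies over the origin. Writing $u = u_1 + u_2$ with $u_1 \in \cE_s'(\rr d)$ and $u_2 \in \Sigma_t^s(\rr d)$, I first observe that $\WF^{t,s}(u) = \WF^{t,s}(u_1)$ and $\WF_s(u) = \WF_s(u_1)$. Indeed $\Sigma_t^s(\rr d) \subseteq G^s(\rr d)$ forces $\WF_s(u_2) = \emptyset$, while $\WF^{t,s}(u_2) = \emptyset$ since $u_2 \in \Sigma_t^s(\rr d)$; the two equalities then follow from \eqref{eq:WFstsublinear} and the analogous subadditivity of $\WF_s$ applied to $u = u_1 + u_2$ and to $u_1 = u - u_2$. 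Hence it suffices to prove \eqref{eq:WFstinclusion} for $u \in \cE_s'(\rr d)$, with $\supp u \subseteq K$ for some compact $K \subseteq \rr d$.

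Arguing contrapositively, fix $(x_0,\xi_0) \in T^* \rr d \setminus 0$ with $(x_0,\xi_0) \notin \{0\} \times \pi_2 \WF_s(u)$; then either $x_0 \neq 0$, or $x_0 = 0$, $\xi_0 \neq 0$ and $\xi_0 \notin \pi_2 \WF_s(u)$. In the first case I would exploit the freedom in the window granted by Proposition \ref{prop:windowinvariance} and choose a compactly supported Gevrey window $\psi \in G_c^s(\rr d) \subseteq \Sigma_t^s(\rr d)$ with $\supp \psi \subseteq \rB_\delta$. Picking an open neighborhood $U$ of $(x_0,\xi_0)$ on which $|x| \geqs c > 0$, the test function $M_{\lambda^s \xi} T_{\lambda^t x} \psi$ is supported in $\rB_\delta(\lambda^t x)$, which is disjoint from $K$ as soon as $\lambda^t c - \delta > \sup_{y \in K} |y|$. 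Consequently $V_\psi u(\lambda^t x, \lambda^s \xi) = (2\pi)^{-\frac d2}(u, M_{\lambda^s \xi} T_{\lambda^t x} \psi) = 0$ for all large $\lambda$ and all $(x,\xi) \in U$, so \eqref{eq:notinWFGFst1} holds trivially and $(x_0,\xi_0) \notin \WF^{t,s}(u)$.

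For the second case I need a conic refinement of the Fourier bound for $\cE_s'$. Since $\WF_s(u)$ is closed, conic in the frequency, and contained in $K \times (\rr d \setminus 0)$, the hypothesis $\xi_0 \notin \pi_2 \WF_s(u)$ together with the compactness of $K$ yields an open cone $\Gamma \ni \xi_0$ with $(K \times \overline\Gamma) \cap \WF_s(u) = \emptyset$. Using a finite $G_c^s$ partition of unity $\{\fy_i\}$ equal to $1$ near $K$, so that $\sum_i u \fy_i = u$, and summing the conic super-exponential decay of the pieces $\wh{u \fy_i}$, I would obtain
\begin{equation*}
|\wh u(\omega)| \lesssim e^{- \rho |\omega|^{\frac1s}}, \quad \omega \in \Gamma, \quad \forall \rho > 0,
\end{equation*}
whereas the global characterization of $\cE_s'(\rr d)$ gives $|\wh u(\omega)| \lesssim e^{r_0 |\omega|^{\frac1s}}$ for some fixed $r_0 > 0$. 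This step — upgrading pointwise non-membership in $\WF_s$ to a uniform conic super-exponential decay of $\wh u$ over the whole support — is the conceptual heart of the argument and the place where the assumption $s > 1$, through the existence of Gevrey cutoffs, is essential.

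Finally I would estimate the short-time Fourier transform on the Fourier side. From $V_\psi u(x,\xi) = \cF(u\, T_x \overline\psi)(\xi)$ and the convolution theorem,
\begin{equation*}
|V_\psi u(\lambda^t x, \lambda^s \xi)| \leqs (2\pi)^{-\frac d2} \int_{\rr d} |\wh u(\omega)| \, |\wh \psi(\omega - \lambda^s \xi)| \, \dd \omega,
\end{equation*}
an estimate uniform in $x$. Choosing a bounded neighborhood $U_0$ of $\xi_0$ whose directions lie well inside $\Gamma$, so that $\lambda^s \xi \in \Gamma$ and $|\lambda^s \xi| \geqs c_0 \lambda^s$ for $\xi \in U_0$ and $\lambda \geqs 1$, I would split the integral at $|\omega - \lambda^s \xi| = \tfrac12 |\lambda^s \xi|$. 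On the near region $\omega \in \Gamma$ with $|\omega|^{\frac1s} \gtrsim \lambda$, so the conic decay of $\wh u$ contributes a factor $e^{-\rho' \lambda}$; on the far region the super-exponential decay of $\wh \psi \in \Sigma_s^t(\rr d)$ defeats the global growth $e^{r_0 |\omega|^{1/s}}$ of $\wh u$, after applying $|\omega|^{\frac1s} \leqs \kappa(s^{-1})(|\omega - \lambda^s \xi|^{\frac1s} + |\lambda^s \xi|^{\frac1s})$ and taking the window decay rate large enough. Both contributions are then $\lesssim e^{-r \lambda}$ for every $r > 0$, uniformly for $(x,\xi)$ in a neighborhood of $(0,\xi_0)$, giving $(0,\xi_0) \notin \WF^{t,s}(u)$ and completing the proof. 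The main obstacle is the conic decay lemma of the third paragraph; the concluding near/far splitting, while technical, is of the routine kind already encountered in the proof of Proposition \ref{prop:windowinvariance}.
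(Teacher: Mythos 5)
Your proposal is correct and follows essentially the same route as the paper: the reduction to $u \in \cE_s'(\rr d)$, the vanishing of $V_\psi u(\lambda^t x,\lambda^s\xi)$ for large $\lambda$ when $x_0 \neq 0$ via a compactly supported Gevrey window, and the convolution estimate $|V_\fy u| \lesssim |\wh u| * |\wh\fy(-\cdot)|$ with a near/far splitting balancing the conic super-exponential decay of $\wh u$ against the Paley--Wiener growth bound. The only difference is presentational: where you sketch the partition-of-unity argument upgrading $\xi_0 \notin \pi_2\WF_s(u)$ to uniform conic decay of $\wh u$, the paper simply invokes the identity $\pi_2\WF_s(u) = V_s(u)$ as a Gevrey adaptation of \cite[Proposition~8.1.3]{Hormander0}.
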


\begin{proof}
We may assume $u \in \cE_s'(\rr d) \subseteq ( \Sigma_t^s )' (\rr d)$. 
We start with the less precise inclusion 
\begin{equation}\label{eq:subinclusion1}
\WF^{t,s} (u) \subseteq \{ 0 \} \times (\rr d \setminus 0). 
\end{equation} 
Suppose $(x_0,\xi_0) \in \rr {2d}$ with $x_0 \neq 0$. 
We pick a neighborhood $U \subseteq \rr {2d}$ such that $(x_0,\xi_0) \in U$ and 
\begin{equation*}
\inf_{(x,\xi) \in U} |x| = \delta > 0. 
\end{equation*} 
If we pick $\fy \in G_c^s(\rr d) \subseteq \Sigma_t^s(\rr d)$ we have $V_\fy u(x,\xi) = 0$ if $|x| \geqs r$ for $r>0$ sufficiently large due to $u \in \cE_s'(\rr d)$. This implies that $V_\fy u(\lambda^t x,\lambda^s \xi) = 0$ if $\lambda^t \geqs r \delta^{-1}$, for all $(x,\xi) \in U$. 
Hence $(x_0, \xi_0) \notin \WF^{t,s} (u)$ and we have shown \eqref{eq:subinclusion1}.

In order to show the sharper inclusion \eqref{eq:WFstinclusion}, suppose $0 \neq (x_0,\xi_0) \notin \{ 0 \} \times \pi_2 \WF_s (u)$. 
Then either $x_0 \neq 0$ or $\xi_0 \notin \pi_2 \WF_s (u)$. If $x_0 \neq 0$ then by \eqref{eq:subinclusion1} we have $(x_0,\xi_0) \notin \WF^{t,s}(u)$. 
Therefore we may assume that $x_0=0$, $\xi_0 \notin \pi_2 \WF_s (u)$ and $\xi_0 \neq 0$, and our goal is to show $(0,\xi_0) \notin \WF^{t,s}(u)$, which will prove \eqref{eq:WFstinclusion}. 

By a slight modification to the Gevrey framework of the proof of \cite[Proposition~8.1.3]{Hormander0} we have $\pi_2 \WF_s(u) = V_s(u)$, where $V_s(u) \subseteq \rr d \setminus 0$ is a closed conic set defined as follows for $u \in \cE_s '(\rr d)$.
A point $\eta \in \rr d \setminus 0$ satisfies $\eta \notin V_s (u)$ if $\eta \in \Gamma_2$ where $\Gamma_2 \subseteq \rr d \setminus 0$ is open and conic, and 
\begin{equation}\label{eq:frequencydecay0}
\sup_{\xi \in \Gamma_2} e^{r |\xi|^{\frac1s}} |\widehat u(\xi)| < \infty \quad \forall r > 0. 
\end{equation} 

Thus we have $\xi_0 \notin V_s(u)$, so there exists an open conic set $\Gamma_2 \subseteq \rr d \setminus 0$ such that $\xi_0 \in \Gamma_2$, and \eqref{eq:frequencydecay0} holds. 
Let $\ep > 0$ be small enough so that $\xi_0 + \rB_{2 \ep} \subseteq \Gamma_2$. 
We assume $\ep \leqs \frac12 |\xi_0|$ which gives $| \xi_0  + \xi | >  \frac12 |\xi_0|$ when $|\xi| < \ep$.

We have 
\begin{equation*}
V_\fy u (x,\xi) = \widehat{u T_x \overline{\fy}} (\xi)
= (2 \pi)^{-\frac{d}2} \widehat{u} * \widehat{T_x \overline{\fy}} (\xi)
\end{equation*} 
which gives
\begin{equation}\label{eq:STFTconvolution1}
|V_\fy u (x,\xi)| 
\lesssim |\wh u| * |g| (\xi), \quad x, \ \xi \in \rr d, 
\end{equation} 
where $g (\xi)= \widehat \fy(-\xi) \in 
\Sigma_s^t (\rr d)$.  
Since 
$u \in \cE_s' (\rr d)$
we obtain from the Paley--Wiener--Schwartz theorem (Gevrey version cf. \cite{Komatsu1,Sobak1} and \cite[Theorems~1.6.1 and 1.6.7]{Rodino1}) for some $a > 0$
\begin{equation}\label{eq:PWSGevrey}
|\widehat{u} (\xi)| \lesssim e^{a |\xi|^{\frac1s}}, \quad \xi \in \rr d, 
\end{equation} 
and we have 
\begin{equation}\label{eq:PWSGevrey2}
|g(\xi)| 
\lesssim e^{-r |\xi|^{\frac1s}}, 
\quad \xi \in \rr d, \quad \forall r > 0.
\end{equation}

Let $(x,\xi) \in \rB_\ep$, $r > 0$ and $\lambda > 0$.  
We have 
\begin{align*}
e^{r \lambda} |V_\fy u ( \lambda^t x,\lambda^s (\xi_0 + \xi ))| 
\lesssim e^{r \lambda} \int_{\rr d} | \wh u ( \lambda^s (\xi_0 + \xi  - \lambda^{-s} \eta) ) | \, | g (\eta) | \, \dd \eta
= I_1 + I_2
\end{align*} 
where we split the integral into the two terms
\begin{align*}
I_1 = & e^{r \lambda} \int_{\rr {d} \setminus \Omega_\lambda} | \wh u ( \lambda^s (\xi_0 + \xi  - \lambda^{-s} \eta) ) | \, | g (\eta) | \, \dd \eta, \\
I_2 = & e^{r \lambda}  \int_{\Omega_\lambda}  | \wh u ( \lambda^s (\xi_0 + \xi  - \lambda^{-s} \eta) ) | \, | g (\eta) | \, \dd \eta
\end{align*}
where  
\begin{equation*}
\Omega_\lambda = \{ \eta \in \rr d: \,  |\eta|^{\frac1s} < \lambda \ep^{\frac1s}  \} \subseteq \rr d. 
\end{equation*}

For $I_1$ we use \eqref{eq:PWSGevrey} which together with \eqref{eq:PWSGevrey2} give for any $r_1 > 0$
\begin{align*}
I_1 \lesssim & e^{r \lambda} \int_{\rr {d} \setminus \Omega_\lambda} e^{a| \lambda^s (\xi_0 + \xi) - \eta) |^{\frac1s}} \, | g (\eta) | \, \dd \eta \\
& \leqs e^{\lambda (r + a \kappa(s^{-1}) |\xi_0 + \xi |^{\frac1s}) } \int_{\rr {d} \setminus \Omega_\lambda} e^{ \kappa(s^{-1}) a | \eta |^{\frac1s}} \, | g (\eta) | \, \dd \eta \\
& \leqs e^{\lambda (r + a \kappa(s^{-1})(|\xi_0| + \ep)^{\frac1s})}  \int_{\rr {d} \setminus \Omega_\lambda} e^{ ( \kappa(s^{-1}) a - \kappa(s^{-1}) a - r_1 - 1) | \eta |^{\frac1s}} \, \dd \eta \\
& \leqs e^{\lambda (r + a \kappa(s^{-1})(|\xi_0|+\ep)^{\frac1s} - r_1 \ep^{\frac1s})} \int_{\rr d} e^{ - | \eta |^{\frac1s}} \, \dd \eta \\
& \leqs C_r
\end{align*}
provided we pick $r_1 \geqs \ep^{-\frac1s}( r+ a \kappa(s^{-1})(|\xi_0|+\ep)^{\frac1s} )$.

It remains to estimate $I_2$. 
If $\eta \in \Omega_\lambda$ then $\lambda^{-s} | \eta| < \ep$ which implies $\xi  - \lambda^{-s} \eta \in \rB_{2 \ep}$, and thus $\xi_0 + \xi  - \lambda^{-s} \eta \in \Gamma_2$. 
Since $\Gamma_2$ is conic we have $\lambda^s (\xi_0 + \xi  - \lambda^{-s} \eta) \in \Gamma_2$. 
Thus we may use \eqref{eq:frequencydecay0}, which together with \eqref{eq:PWSGevrey2} give for any $r_1, r_2 > 0$
\begin{align*}
I_2 & \lesssim e^{r \lambda}  \int_{\Omega_\lambda}  e^{- \kappa(s^{-1}) r_1  |  \lambda^s (\xi_0 + \xi  - \lambda^{-s} \eta) |^{\frac1s} } \, | g (\eta) | \, \dd \eta \\
& \leqs e^{r \lambda}  \int_{\Omega_\lambda}  e^{- r_1  \lambda |  \xi_0 + \xi |^{\frac1s} + \kappa(s^{-1}) r_1 |\eta |^{\frac1s} } \, | g (\eta) | \, \dd \eta \\
& \leqs e^{\lambda (r - r_1 2^{- \frac1s} | \xi_0 |^{\frac1s} )}  \int_{\rr d}   e^{ \kappa(s^{-1}) r_1 |\eta |^{\frac1s} } \, | g (\eta) | \, \dd \eta \\
& \lesssim e^{\lambda (r - r_1 2^{- \frac1s} | \xi_0 |^{\frac1s} )}  \int_{\rr d}   e^{(\kappa(s^{-1}) r_1 - r_2)|\eta |^{\frac1s} }  \dd \eta \\
& \leqs C_r
\end{align*}
if we first pick $r_1 \geqs 2^{ \frac1s} | \xi_0 |^{-\frac1s}  r$ and then pick $r_2 > \kappa(s^{-1}) r_1$. 
We have shown $(0,\xi_0) \notin \WF^{t,s} (u)$. 
\end{proof}

\begin{cor}\label{cor:WFsfreqaxis}
If $t \geqs s > 1$ and $u \in 
\cE_s' (\rr d)
+ \Sigma_t^s(\rr d)$ then 
\begin{equation*}
\WF^{t,s}(u) = \{ 0 \} \times V_s(u). 
\end{equation*}
\end{cor}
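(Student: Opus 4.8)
The plan is to obtain the equality by sandwiching $\WF^{t,s}(u)$ between the two inclusions already established, since under the hypotheses $t \geqs s > 1$ both Proposition \ref{prop:WFGevreyWFst} and Proposition \ref{prop:WFsfreqaxis} apply. First I would note that $\cE_s'(\rr d) + \Sigma_t^s(\rr d) \subseteq (\Sigma_t^s)'(\rr d)$, so that Proposition \ref{prop:WFGevreyWFst} gives $\{ 0 \} \times \pi_2 \WF_s(u) \subseteq \WF^{t,s}(u)$, while Proposition \ref{prop:WFsfreqaxis} gives the reverse inclusion $\WF^{t,s}(u) \subseteq \{ 0 \} \times \pi_2 \WF_s(u)$. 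Combining the two yields
\[
\WF^{t,s}(u) = \{ 0 \} \times \pi_2 \WF_s(u).
\]

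Next I would invoke the identity $\pi_2 \WF_s(u) = V_s(u)$, recorded in the proof of Proposition \ref{prop:WFsfreqaxis} for $u \in \cE_s'(\rr d)$ through the Gevrey analogue of the proof of \cite[Proposition~8.1.3]{Hormander0}. To pass from a general $u \in \cE_s'(\rr d) + \Sigma_t^s(\rr d)$ to its compactly supported component, I would write $u = u_1 + u_2$ with $u_1 \in \cE_s'(\rr d)$ and $u_2 \in \Sigma_t^s(\rr d)$, and observe that $u_2$ is invisible to both wave front sets: since $u_2 \in \Sigma_t^s(\rr d)$ we have $\WF^{t,s}(u_2) = \emptyset$, and since $\Sigma_t^s(\rr d) \subseteq G^s(\rr d)$ we have $\WF_s(u_2) = \emptyset$. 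Applying the subadditivity \eqref{eq:WFstsublinear} (and its evident $\WF_s$ counterpart) both to $u = u_1 + u_2$ and to $u_1 = u - u_2$ gives $\WF^{t,s}(u) = \WF^{t,s}(u_1)$ and $\WF_s(u) = \WF_s(u_1)$. Hence $V_s(u)$ may be understood as $V_s(u_1) = \pi_2 \WF_s(u_1) = \pi_2 \WF_s(u)$, and the conclusion
\[
\WF^{t,s}(u) = \{ 0 \} \times V_s(u)
\]
follows.

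The essential content of the corollary is entirely carried by the two preceding propositions, so there is no genuine obstacle here; the corollary is a direct assembly. The only points requiring attention are the verification that the hypotheses of both propositions hold simultaneously---this is precisely why the combined statement requires $t \geqs s > 1$, the tighter constraint coming from Proposition \ref{prop:WFGevreyWFst}, so that the weaker assumption $t > 0$ of Proposition \ref{prop:WFsfreqaxis} does not suffice by itself---and the reduction of the identity $\pi_2 \WF_s = V_s$ from the compactly supported case to the sum $\cE_s' + \Sigma_t^s$ via the fact that the Gelfand--Shilov summand carries empty wave front sets.
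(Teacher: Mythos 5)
Your argument is correct and is precisely the paper's intended (implicit) proof: the corollary is obtained by sandwiching $\WF^{t,s}(u)$ between the lower bound of Proposition~\ref{prop:WFGevreyWFst} and the upper bound of Proposition~\ref{prop:WFsfreqaxis}, then identifying $\pi_2\WF_s(u)=V_s(u)$ as recorded in the proof of Proposition~\ref{prop:WFsfreqaxis}. Your extra care in checking that the $\Sigma_t^s$ summand contributes nothing to either wave front set (so that $V_s(u)=V_s(u_1)$) is a detail the paper glosses over, and it is handled correctly.
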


The following result is a sort of converse to Corollary \ref{cor:WFsfreqaxis}. 

\begin{prop}\label{prop:WFsnotfreqaxis}
Let $s,t > 0$ satisfy $s + t > 1$, and let  
$u \in (\Sigma_t^s)'(\rr d)$. 
If 
\begin{equation*}
\WF^{t,s}(u) \cap \{ 0 \} \times (\rr d \setminus 0) = \emptyset
\end{equation*}
then $u \in C^\infty(\rr d)$ and there exist $C,r > 0$ such that 
\begin{equation}\label{eq:exponentialbound1}
|\partial^\alpha u(x)| \leqs C^{1+|\alpha|} \alpha!^s e^{r |x|^\frac1t}, \quad x \in \rr d, \quad \alpha \in \nn d. 
\end{equation}
\end{prop}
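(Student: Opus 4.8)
The plan is to convert the hypothesis into super-exponential decay of $V_\psi u$ in an anisotropic conic neighbourhood of the frequency axis, and then to read the bound \eqref{eq:exponentialbound1} off the inversion formula \eqref{eq:STFTinverse} by splitting the phase-space integral into that neighbourhood and its complement. First I fix a window $\psi \in \Sigma_t^s(\rr d) \setminus 0$ with $\| \psi \|_{L^2} = 1$ and exploit compactness of the sphere. By assumption every $(0,\xi_0)$ with $|\xi_0| = 1$ lies outside $\WF^{t,s}(u)$, hence admits an open neighbourhood on which \eqref{eq:notinWFGFst1} holds; covering the compact set $\{0\} \times \sr{d-1}$ by finitely many of these and passing to a tube, I obtain $\ep \in (0,1)$ such that \eqref{eq:notinWFGFst1} holds (uniformly in $r$) on $U^* = \{ (x,\xi): |x| < \ep, \ \big| |\xi| - 1 \big| < \ep \}$. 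Scaling a point $(X,\Xi)$ with $|X| < \ep |\Xi|^{t/s}$ back into $U^*$ by the choice $\lambda = |\Xi|^{1/s}$ then yields
\begin{equation*}
|V_\psi u(x,\xi)| \leqs C_r \, e^{-r |\xi|^{\frac1s}}, \qquad |x| < \ep |\xi|^{\frac1s \cdot t}, \quad \xi \neq 0, \quad \forall r > 0,
\end{equation*}
whereas on the complementary set $|x| \geqs \ep |\xi|^{t/s}$ one has $|\xi| \leqs (|x|/\ep)^{s/t}$, so \eqref{eq:STFTGFstdistr} gives $|V_\psi u(x,\xi)| \lesssim e^{r_1' |x|^{1/t}}$ with $\xi$ controlled by $x$.

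Next I define $\tilde u$ by the right-hand side of \eqref{eq:STFTinverse}, differentiate under the integral sign and apply Leibniz, obtaining
\begin{equation*}
\partial^\alpha \tilde u(y) = (2\pi)^{-\frac d2} \sum_{\beta \leqs \alpha} \binom{\alpha}{\beta} i^{|\beta|} \iint_{\rr {2d}} V_\psi u(x,\xi) \, \xi^\beta e^{i \la y, \xi \ra} (\partial^{\alpha-\beta}\psi)(y-x) \, \dd x \, \dd \xi.
\end{equation*}
On the region $A = \{ |x| < \ep |\xi|^{t/s} \}$ the super-exponential decay absorbs $\xi^\beta$: since $\int_{\rr d} |\xi|^{|\beta|} e^{-r |\xi|^{1/s}} \dd \xi \leqs C_r^{|\beta|} |\beta|!^s$ with $C_r \to 0$ as $r \to \infty$, and the decay bound is independent of $x$, the remaining $x$-integral of the window $|\partial^{\alpha-\beta}\psi(y-x)| \lesssim C^{|\alpha-\beta|} (\alpha-\beta)!^s e^{-\delta |y-x|^{1/t}}$ is simply the finite constant $\| \partial^{\alpha-\beta}\psi \|_{L^1}$, independent of $y$. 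Thus region $A$ contributes at most $C^{|\alpha|} \alpha!^s$ after summation over $\beta$, with no growth in $y$.

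On the region $B = \{ |x| \geqs \ep |\xi|^{t/s} \}$ I use $|\xi|^{|\beta|} \leqs (|x|/\ep)^{s|\beta|/t}$ and that the admissible $\xi$-ball has volume $\lesssim (|x|/\ep)^{ds/t}$; combining the growth $e^{r_1' |x|^{1/t}}$ with the window decay and the inequality $|x|^{1/t} \leqs \kappa(t^{-1})(|y|^{1/t} + |y-x|^{1/t})$, I split off the factor $e^{r |y|^{1/t}}$ and am left with a convergent $x$-integral, provided the window decay rate $\delta$ (which may be taken arbitrarily large for $\psi \in \Sigma_t^s$) dominates. The polynomial factors $|x|^{s|\beta|/t}$ and $|x|^{ds/t}$ are reabsorbed via $\tau^{sk} \leqs \delta^{-sk} \Gamma(sk+1) e^{\delta \tau}$ together with $\Gamma(sk+1) \lesssim C^k k!^s$, each absorption costing only a harmless $\delta |y|^{1/t}$ in the exponent. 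Summing over $\beta \leqs \alpha$ with $\sum_{\beta \leqs \alpha} \binom{\alpha}{\beta} \beta!^s (\alpha-\beta)!^s \leqs C^{|\alpha|} \alpha!^s$ produces \eqref{eq:exponentialbound1}. Taking $\alpha = 0$ in the same estimates shows the integral converges absolutely, so $\tilde u \in C^\infty(\rr d)$, and \eqref{eq:STFTinverse}--\eqref{eq:moyal} identify $\tilde u = u$.

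The main obstacle I expect is region $B$: one must check that the genuine polynomial growth in $|x|$ coming \emph{both} from $|\xi|^{|\beta|}$ and from the volume of the admissible $\xi$-ball can be reabsorbed into the Gevrey factor $C^{1+|\alpha|} \alpha!^s$ and the exponential $e^{r |x|^{1/t}}$ without spoiling either, which rests on the interplay between the anisotropic exponent $s/t$ and the factorial/$\Gamma$-function estimates. Care is also needed in the first step to ensure that the compactness-plus-scaling reduction genuinely delivers the clean region $\{ |x| < \ep |\xi|^{t/s} \}$ with decay uniform in $r$, since the anisotropic curves rather than straight rays govern the neighbourhood.
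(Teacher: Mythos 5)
Your proposal is correct and follows essentially the same route as the paper's proof: compactness of $\sr{d-1}$ yields super-exponential decay of $V_\psi u$ on the anisotropic region $\{|x| < \ep |\xi|^{t/s}\}$ (the paper's set $\Gamma$, characterized by $|x|^{1/t} < \ep^{1/t}|\xi|^{1/s}$), and the differentiated inversion formula is then split into this region and its complement $\{|\xi|^{1/s} \lesssim |x|^{1/t}\}$, exactly as in \eqref{eq:integralGamma} and \eqref{eq:integralOmega}. The only differences are cosmetic — you absorb $|\xi|^{|\beta|}$ by direct integration on the good region and by a volume-plus-Gamma-function bound on the bad one, where the paper uses the uniform estimate $|\xi|^{|\beta|} \leqs (ds/r)^{s|\beta|}\beta!^s e^{r|\xi|^{1/s}}$ throughout.
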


\begin{proof}
Let $\fy \in \Sigma_t^s (\rr d)$ satisfy $\| \fy \|_{L^2} = 1$. 
Using the compactness of $\sr {d-1} \subseteq \rr d$ we obtain the following conclusion from the assumption. 
There exists $\ep > 0$ such that 
\begin{equation}\label{eq:decayGamma1}
\sup_{(x,\xi) \in \rB_\ep, \ \xi_0 \in \sr {d-1}, \ \lambda > 0} e^{r \lambda} |V_\fy u (\lambda^t x, \lambda^s (\xi_0 + \xi))| < \infty \quad \forall r > 0. 
\end{equation}
Set
\begin{equation*}
\Gamma = \{ (\lambda^t x, \lambda^s (\xi_0 + \xi) ) \in \rr {2d} \setminus 0: \, \xi_0 \in \sr {d-1}, \, (x,\xi) \in \rB_\ep, \, \lambda > 0 \}. 
\end{equation*}
If $(y,\eta) \in \Gamma$ then $\eta = \lambda^s (\xi_0 + \xi)$ and $y = \lambda^t x$ for some $\xi_0 \in \sr {d-1}$, $(x,\xi) \in \rB_\ep$, and $\lambda > 0$, 
so $|\eta|^{\frac1s} = \lambda |\xi_0 + \xi|^{\frac1s} < \lambda ( 1 + \ep )^{\frac1s}$ and $|y|^\frac1t = \lambda |x|^\frac1t < \lambda \ep^\frac1t$. 
Thus from \eqref{eq:decayGamma1} it follows that we have 
\begin{equation}\label{eq:decayGamma2}
\sup_{(x,\xi) \in \Gamma} e^{r ( |x|^{\frac1t} + |\xi|^{\frac1s} )} |V_\fy u (x,\xi)| < \infty \quad \forall r > 0. 
\end{equation}

We claim that if $(y,\eta) \in \rr {2d} \setminus 0$ then
\begin{equation}\label{eq:implicationGamma}
|y|^{\frac1t} < \ep^{\frac1t} | \eta |^{\frac1s} \quad \Longrightarrow \quad (y, \eta) \in \Gamma. 
\end{equation}
In fact suppose $|y|^{\frac1t} < \ep^{\frac1t} | \eta |^{\frac1s}$. Since $\eta \neq 0$ we may define $\lambda = |\eta|^\frac1s > 0$
and $\xi_0 = \lambda^{-s} \eta \in \sr {d-1}$, whence $\eta = \lambda^s \xi_0$. Set $x = \lambda^{-t} y$ so that $y = \lambda^{t} x$. We have 
\begin{equation*}
|x|^\frac1t = |\eta|^{-\frac1s} |y|^\frac1t < \ep^{\frac1t}
\end{equation*}
so $x \in \rB_\ep$ which proves that $(y, \eta) \in \Gamma$. 

From \eqref{eq:implicationGamma} we may conclude
\begin{equation}\label{eq:cover1}
\Gamma \cup \Omega = \rr {2d} \setminus 0
\end{equation}
where 
\begin{equation*}
\Omega = \{ (y,\eta) \in \rr {2d} \setminus 0: \, | \eta |^{\frac1s} \leqs C |y|^{\frac1t}  \}
\end{equation*}
for some $C > 0$.

We use \eqref{eq:STFTinverse} for $u \in ( \Sigma_t^s )' (\rr d)$ and $\fy \in \Sigma_t^s (\rr d)$ with $\| \fy \|_{L^2} = 1$, cf. \cite{Toft1}, 
and show that the integral for $\partial^\alpha u$ is absolutely convergent for any $\alpha \in \nn d$. 
Thus we write formally
\begin{equation}\label{eq:STFTreconstruction}
\partial^\alpha u (y) = (2\pi)^{-\frac{d}{2}} \sum_{\beta \leqs \alpha} \binom{\alpha}{\beta} \int_{\rr {2d}} V_\fy u(x,\xi) \, (i\xi)^\beta e^{i \langle \xi,y \rangle} \partial^{\alpha-\beta} \fy (y-x) \, \dd x \, \dd \xi. 
\end{equation}

We will need the estimate for any $r > 0$
\begin{align*}
|\xi|^{\beta} 
& = 
\left( \frac{d s}{r} \right)^{s |\beta|} \beta!^s
\left( 
\frac{ \left( \frac{r}{d s} |\xi|^{\frac1s} \right)^{|\beta|}}{\beta!} 
\right)^s
\leqs
\left( \frac{d s}{r} \right)^{s |\beta|} \beta!^s
\left( 
\frac{ \left( \frac{r}{s} |\xi|^{\frac1s} \right)^{|\beta|}}{|\beta|!} 
\right)^s \\
& \leqs \left( \frac{d s}{r} \right)^{s |\beta|} \beta!^s e^{r |\xi|^{\frac1s}} 
\end{align*}
as well as
\begin{equation}\label{eq:Sstderivativeestimate1}
|D^\beta \fy (x)| \leqs C_{r,h} h^{|\beta|} \beta!^s e^{- r |x|^{\frac1t}}, \quad \beta \in \nn d, \quad x \in \rr d, 
\end{equation}
for any $h, r > 0$. 

In order to prove \eqref{eq:Sstderivativeestimate1} we may use the seminorms \eqref{eq:seminormSigmas} with $h^{|\alpha+\beta|}$ replaced by $h_1^{|\alpha|} h_2^{|\beta|}$ for two different arbitrary $h_1, h_2 > 0$. 
The argument is known but we repeat it for the benefit of the reader. 

If $r > 0$ then we obtain from \eqref{eq:seminormSigmas} for any $h_1, h_2 > 0$
\begin{align*}
e^{\frac{r}{t} |x|^{\frac1t}} |D^\beta \fy (x)|^{\frac1t}
& = \sum_{n=0}^\infty 2^{-n} \left( \frac{\left( \frac{2r}{t} \right)^{tn} }{n!^t} |x|^n \, |D^\beta \fy (x)| \right)^{\frac1t} \\
& \leqs 2 \left( \sup_{n \geqs 0} \frac{\left( \frac{2r}{t} \right)^{t n} d^{\frac{n}{2}}}{n!^t} \max_{|\alpha|=n} |x^{\alpha} D^\beta \fy (x)| \right)^{\frac1t} \\
& \leqs \left( C_{h_1,h_2} h_2^{|\beta|} \beta!^s
\sup_{n \geqs 0} \left( \left( \frac{2r}{t} \right)^{t} d^{\frac12} h_1 \right)^n
\right)^{\frac1t} \\
& \leqs \left( C_{h_2,r} \, h_2^{|\beta|} \beta!^s  \right)^{\frac1t}
\end{align*}
provided $h_1 \leqs \left( \frac{t}{2r} \right)^{t} d^{-\frac12}$. 
We have proved \eqref{eq:Sstderivativeestimate1} for any $h, r > 0$.

We split the integral \eqref{eq:STFTreconstruction} in two parts.
We obtain using \eqref{eq:decayGamma2} for any $r_1, r_2, r_3 > 0$ and $0 < h \leqs 1$
\begin{equation}\label{eq:integralGamma}
\begin{aligned}
& \left| \int_{\Gamma} V_\fy u(x,\xi) \, (i\xi)^\beta e^{i \langle \xi,y \rangle} \partial^{\alpha-\beta} \fy(y-x) \, \dd x \, \dd \xi \right| \\
& \leqslant \int_{\Gamma} |V_\fy u(x,\xi)| \, |\xi|^{|\beta|} \, |\partial^{\alpha-\beta} \fy(y-x)| \, \dd x \, \dd \xi \\
& \lesssim h^{|\alpha-\beta|}  \left( \frac{d s}{r_2} \right)^{s |\beta|} (\alpha-\beta)!^s \beta!^s \int_{\Gamma} e^{-r_1 (|x|^\frac1t +  |\xi|^\frac1s) + r_2 |\xi|^\frac1s  - \kappa(t^{-1}) r_3 |y-x|^\frac1t}  \,  \dd x \, \dd \xi \\
& \lesssim \left( h^{-1} \left( \frac{d s}{r_2} \right)^s \right)^{|\beta|} \alpha!^s e^{ -r_3 |y|^{\frac1t}}
\int_{\rr {2d}} e^{-r_1 (|x|^\frac1t +  |\xi|^\frac1s) + r_2 |\xi|^{\frac1s}  + \kappa(t^{-1}) r_3 |x|^{\frac1t}}  \,  \dd x \, \dd \xi \\
& \lesssim \left( h^{-1} \left( \frac{d s}{r_2} \right)^s \right)^{|\alpha|} \alpha!^s e^{ -r_3 |y|^{\frac1t}}
\end{aligned}
\end{equation}
provided $h \leqs \left( \frac{d s}{r_2} \right)^s$ and $r_1 > \max(r_2, \kappa(t^{-1}) r_3)$.

For the remaining part of the integral we may by \eqref{eq:cover1} assume that $(x,\xi) \in \Omega$. 
Using \eqref{eq:STFTGFstdistr} we obtain for some $r_1 > 0$ and any $r_2, r_3 > 0$ and $0 < h \leqs 1$
\begin{equation}\label{eq:integralOmega}
\begin{aligned}
& \left| \int_{\Omega} V_\fy u(x,\xi) \, (i\xi)^\beta e^{i \langle \xi,y \rangle} \partial^{\alpha-\beta} \fy(y-x) \, \dd x \, \dd \xi \right| \\
& \lesssim \int_{| \xi |^{\frac1s} \leqs C |x|^{\frac1t}} e^{r_1 ( |x|^\frac1t +|\xi|^\frac1s})  |\xi|^{|\beta|} \, |\partial^{\alpha-\beta} \fy(y-x)| \, \dd x \, \dd \xi \\
& \lesssim h^{|\alpha-\beta|} \left( \frac{d s}{r_2} \right)^{s |\beta|} (\alpha-\beta)!^s \beta!^s \int_{| \xi |^{\frac1s} \leqs C |x|^{\frac1t} } e^{ r_1 ( |x|^\frac1t +|\xi|^\frac1s) + r_2 |\xi|^{\frac1s} - \kappa(t^{-1}) r_3 |y-x|^\frac1t } \, \dd x \, \dd \xi \\
& \lesssim 
\left( h^{-1} \left( \frac{d s}{r_2} \right)^s \right)^{|\beta|} \alpha!^s e^{ \kappa(t^{-1}) r_3 |y|^{\frac1t} }
\int_{| \xi |^{\frac1s} \leqs C |x|^{\frac1t}} e^{ r_1 ( |x|^\frac1t +|\xi|^\frac1s) + r_2 |\xi|^{\frac1s} - r_3 |x|^\frac1t } \, \dd x \, \dd \xi \\
& \leqs
\left( h^{-1} \left( \frac{d s}{r_2} \right)^s \right)^{|\alpha|} \alpha!^s e^{ \kappa(t^{-1}) r_3 |y|^{\frac1t} }
\int_{| \xi |^{\frac1s} \leqs C |x|^{\frac1t}} e^{ - |\xi|^\frac1s +  (r_1 - r_3) |x|^\frac1t + (1 + r_1 + r_2) |\xi|^\frac1s } \, \dd x \, \dd \xi \\
& \leqs 
\left( h^{-1} \left( \frac{d s}{r_2} \right)^s \right)^{|\alpha|} \alpha!^s e^{ \kappa(t^{-1}) r_3 |y|^{\frac1t} }
\int_{| \xi |^{\frac1s} \leqs C |x|^{\frac1t}} e^{ - |\xi|^\frac1s +  (r_1 + C (1 + r_1 + r_2) - r_3) |x|^\frac1t  } \, \dd x \, \dd \xi \\
& \lesssim \left( h^{-1} \left( \frac{d s}{r_2} \right)^s \right)^{|\alpha|} \alpha!^s e^{ \kappa(t^{-1}) r_3 |y|^{\frac1t} }
\end{aligned}
\end{equation}
provided $h \leqs \left( \frac{d s}{r_2} \right)^s$ and $r_3 > r_1 + C (1 + r_1 + r_2)$. 

Combining \eqref{eq:integralGamma} and \eqref{eq:integralOmega} shows in view of \eqref{eq:STFTreconstruction} that $u \in C^\infty(\rr d)$ and the estimate \eqref{eq:exponentialbound1} follows. 
\end{proof}

\section{Microlocality}\label{sec:microlocal}

The next result concerns microlocality with respect to $\WF^{t,s}$ of pseudodifferential operators. 

We use a space of smooth symbols originally introduced in \cite[Definition~1.8]{Abdeljawad1} and denoted $\Gamma_{t,s}^{s,t; 0} (\rr {2d})$.
For $s,t > 0$ such that $s + t > 1$, $a \in \Gamma_{t,s}^{s,t; 0} (\rr {2d})$ means that $a \in C^\infty(\rr {2d})$ and 
\begin{equation}\label{eq:symbolvillkor2}
|\partial_{x}^{\alpha} \partial_{\xi}^{\beta} a(x,\xi)| \lesssim h^{|\alpha + \beta|} \alpha!^s \beta!^t e^{\mu (|x|^{\frac1t} + |\xi|^{\frac1s} )}, \quad \alpha, \beta \in \nn d, \quad x, \xi \in \rr d, 
\end{equation}
for some $\mu > 0$ and for all $h > 0$. 
The space $\Gamma_{t,s}^{s,t; 0} (\rr {2d})$ is characterized in \cite[Proposition~2.3]{Abdeljawad1} using the STFT as follows. Let $\Phi \in \Sigma_{t,s}^{s,t}(\rr {2d}) \setminus 0$ be arbitrary. 
Then $a \in \Gamma_{t,s}^{s,t; 0} (\rr {2d})$ if and only if 
\begin{equation}\label{eq:STFTvillkor1}
| V_\Phi a(z_1, z_2,\zeta_1, \zeta_2)  | \lesssim e^{\mu (|z_1|^{\frac1t} + |z_2|^{\frac1s})- b (|\zeta_1|^{\frac1s} + |\zeta_2|^{\frac1t})}, \quad z_1, z_2, \zeta_1, \zeta_2 \in \rr d, 
\end{equation}
for some $\mu > 0$ and all $b > 0$. 

If $a \in \Gamma_{t,s}^{s,t; 0} (\rr {2d})$ then $a^w(x,D): \Sigma_t^s (\rr d) \to \Sigma_t^s (\rr d)$ is continuous
and extends uniquely to a continuous operator $a^w(x,D): (\Sigma_t^s)' (\rr d) \to (\Sigma_t^s)' (\rr d)$ according to \cite[Theorem~3.15]{Abdeljawad1}. 

By the following result
it is also microlocal with respect to the $t,s$-Gelfand--Shilov wave front set. 

\begin{thm}\label{thm:microlocalWFs}
If $s,t > 0$ satisfy $s + t > 1$ 
and $a \in \Gamma_{t,s}^{s,t; 0} (\rr {2d})$ then 
\begin{equation}\label{eq:microlocal1}
\WF^{t,s}( a^w(x,D) u ) \subseteq \WF^{t,s}(u), \quad u \in (\Sigma_t^s)'(\rr d). 
\end{equation}
\end{thm}

\begin{proof}
Pick $\fy \in \Sigma_t^s(\rr d)$ such that $\| \fy \|_{L^2}=1$.
Recall the notation $\Pi(x,\xi) = M_\xi T_x$ for $(x,\xi) \in \rr {2d}$. 
Denoting the formal adjoint of $a^w(x,D)$ by $a^w(x,D)^*$, \eqref{eq:moyal} gives for $u \in (\Sigma_t^s)'(\rr d)$ and $z \in \rr {2d}$
\begin{align*}
(2 \pi)^{\frac{d}{2}}  V_\varphi (a^w(x,D) u) (z)
& = ( a^w(x,D) u, \Pi(z) \varphi ) \\
& = ( u, a^w(x,D)^* \Pi(z) \varphi ) \\
& = \int_{\rr {2d}} V_\varphi u(w) \, ( \Pi(w) \varphi,a^w(x,D)^* \Pi(z) \varphi ) \, \dd w \\
& = \int_{\rr {2d}} V_\varphi u(w) \, ( a^w(x,D) \, \Pi(w) \varphi,\Pi(z) \varphi ) \, \dd w \\
& = \int_{\rr {2d}} V_\varphi u(z-w) \, ( a^w(x,D) \, \Pi(z-w) \varphi,\Pi(z) \varphi ) \, \dd w.
\end{align*}
By e.g. \cite[Lemma 3.1]{Grochenig2}, or a direct computation involving \eqref{eq:wignerweyl}, we have 
\begin{equation*}
|( a^w(x,D) \, \Pi(z-w) \varphi,\Pi(z) \varphi )|
= \left| V_\Phi a \left( z-\frac{w}{2}, \J w \right) \right|
\end{equation*}
where $\Phi$ is the Wigner distribution $\Phi = W(\fy,\fy)$. 

We have $\Phi \in \Sigma_{t,s}^{s,t}(\rr {2d})$. 
In fact we have $\fy \otimes \overline \fy \in \Sigma_{t,t}^{s,s}(\rr {2d})$
and therefore also $(\fy \otimes \overline \fy) \circ \kappa \in \Sigma_{t,t}^{s,s}(\rr {2d})$
where $\kappa(x,y) = (x+y/2, x- y/2)$. 
Since $W(\fy,\fy) = (2 \pi)^{\frac{d}{2}} \cF_2 ( (\fy \otimes \overline \fy) \circ \kappa )$
we obtain from \cite[Proposition~1.1]{Abdeljawad1}
the conclusion $\Phi \in \Sigma_{t,s}^{s,t}(\rr {2d})$. 

Combining the preceding identities we deduce
\begin{align*}
|V_\varphi (a^w(x,D) u) (z)|
& \lesssim \int_{\rr {2d}}  |V_\varphi u(z-w)| \, \left| V_\Phi a \left( z-\frac{w}{2}, \J w \right) \right| \, \dd w.
\end{align*}

Suppose $z_0 \in \rr {2d} \setminus 0$ and $z_0 \notin \WF^{t,s}(u)$. 
There exists an open set $V$ such that $z_0 \in V$ and 
\eqref{eq:notinWFGFst2} holds. 
We pick an open set $U$ such that 
$z_0 \in U$ and $U + \rB_\ep \subseteq V$ for some $0 < \ep \leqs 1$, and we may assume 
\begin{equation}\label{eq:Ubound}
\sup_{z \in U} |z| \leqs |z_0| + 1 := \alpha. 
\end{equation}

Let $r > 0$ and $\lambda > 0$. We have
\begin{align*}
& e^{r \lambda} |V_\fy (a^w(x,D) u)  (\lambda^t x, \lambda^s \xi)| \\
& \lesssim \iint_{\rr {2d}} e^{r \lambda } |V_\fy u ( \lambda^t (x- \lambda^{-t} y), \lambda^s (\xi - \lambda^{-s} \eta))| \, \left| V_\Phi a \left( \lambda^t x-\frac{y}{2}, \lambda^s \xi-\frac{\eta}{2},  \eta, -y \right) \right| \, \dd y \, \dd \eta \\
& = I_1 + I_2
\end{align*}
where we split the integral into the two terms 
\begin{align*}
I_1 = & \iint_{\rr {2d} \setminus \Omega_\lambda} e^{r \lambda} |V_\fy u ( \lambda^t (x- \lambda^{-t} y), \lambda^s (\xi - \lambda^{-s} \eta))| \, \left| V_\Phi a \left( \lambda^t x-\frac{y}{2}, \lambda^s \xi-\frac{\eta}{2},  \eta, -y \right) \right| \, \dd y \, \dd \eta, \\
I_2 = & \iint_{\Omega_\lambda} e^{r \lambda} |V_\fy u ( \lambda^t (x- \lambda^{-t} y), \lambda^s (\xi - \lambda^{-s} \eta))| \, \left| V_\Phi a \left( \lambda^t x-\frac{y}{2}, \lambda^s \xi-\frac{\eta}{2},  \eta, -y \right) \right| \, \dd y \, \dd \eta
\end{align*}
where  
\begin{equation*}
\Omega_\lambda = \{(y,\eta) \in \rr {2d}: |y|^{\frac1t} + |\eta|^{\frac1s} < 2^{-\frac{1}{2v}} \ep^{\frac1v} \lambda \} 
\end{equation*}
with $v = \min(s,t)$. 

First we estimate $I_1$ when $(x,\xi) \in U$. 
Set $\kappa = \max(\kappa(t^{-1}), \kappa(s^{-1}))$.
From 
\eqref{eq:STFTGFstdistr}, \eqref{eq:STFTvillkor1} and \eqref{eq:Ubound}
we obtain for some $r_1, \mu > 0$ and any $b > 0$
\begin{equation}\label{eq:estimateI1b}
\begin{aligned}
I_1 
& \lesssim e^{r \lambda } \iint_{\rr {2d} \setminus \Omega_\lambda} e^{r_1 \lambda |x- \lambda^{-t} y|^{\frac1t} + r_1 \lambda |\xi- \lambda^{-s} \eta|^{\frac1s}}  \, \left| V_\Phi a \left( \lambda^t x-\frac{y}{2}, \lambda^s \xi-\frac{\eta}{2},  \eta, -y \right) \right| \, \dd y \, \dd \eta \\
& \leqs e^{ r \lambda+ \kappa r_1 \lambda |x|^{\frac1t} + \kappa r_1 \lambda |\xi|^{\frac1s} } \iint_{\rr {2d} \setminus \Omega_\lambda} e^{ r_1 \kappa (| y|^{\frac1t} + | \eta|^{\frac1s} )}  \, \left| V_\Phi a \left( \lambda^t x-\frac{y}{2}, \lambda^s \xi-\frac{\eta}{2},  \eta, -y \right) \right| \, \dd y \, \dd \eta \\
& \lesssim e^{ r \lambda+ r_1 \lambda \kappa (\alpha^{\frac1t} + \alpha^{\frac1s} ) } 
\iint_{\rr {2d} \setminus \Omega_\lambda} 
e^{ r_1 \kappa (| y|^{\frac1t} + | \eta|^{\frac1s})   
+ \mu \left( \left| \lambda^t x-\frac{y}{2}\right|^{\frac1t} + \left| \lambda^s \xi-\frac{\eta}{2}\right|^{\frac1s} \right)  
- (b+1) \left( \left| \eta \right|^{\frac1s} + \left| y \right|^{\frac1t} \right) }   \, \dd y \, \dd \eta \\
& \lesssim e^{ \lambda \left( r + (r_1+\mu) \kappa (\alpha^{\frac1t} + \alpha^{\frac1s}) \right)} \iint_{\rr {2d} \setminus \Omega_\lambda} e^{ \kappa (r_1 + 2^{-\frac1t} \mu - b) | y|^{\frac1t} + \kappa ( r_1 + 2^{-\frac1s} \mu - b) | \eta|^{\frac1s}   
-  \left( \left| \eta \right|^{\frac1s} + \left| y \right|^{\frac1t} \right) }   \, \dd y \, \dd \eta \\
& \leqs e^{ \lambda \left( r + 2 (r_1+\mu) \kappa \alpha^{\frac1v} \right)} \iint_{\rr {2d} \setminus \Omega_\lambda} e^{ \kappa (r_1 + \mu - b) (| y|^{\frac1t} +  | \eta|^{\frac1s} )   
-  \left( \left| \eta \right|^{\frac1s} + \left| y \right|^{\frac1t} \right) }   \, \dd y \, \dd \eta \\
& \leqs e^{ \lambda \left( r + 2 (r_1+\mu) \kappa \alpha^{\frac1v} 
+ \kappa (r_1 + \mu - b) 2^{-\frac{1}{2v}} \ep^{\frac1v} \right)} 
\iint_{\rr {2d}} e^{-\left( \left| \eta \right|^{\frac1t} + \left| y \right|^{\frac1s} \right) }   \, \dd y \, \dd \eta \\
& \lesssim e^{ \lambda \left( r + 2 (r_1+\mu) \kappa \alpha^{\frac1v}
+ \kappa (r_1 + \mu - b) 2^{-\frac{1}{2v}} \ep^{\frac1v} \right)} 
\leqs C_{r}
\end{aligned}
\end{equation}
for any $\lambda > 0$, 
provided we pick $b \geqs r_1 + \mu + \kappa^{-1} 2^{\frac{1}{2v}} \ep^{- \frac1v} \left( r + 2 (r_1+\mu) \kappa \alpha^{\frac1v} \right)$. 
Here $C_{r} > 0$ is a constant that depends on $r > 0$ but not on $\lambda > 0$. 
Thus we have obtained the requested estimate for $I_1$. 

It remains to estimate $I_2$. 
From $|y|^{\frac1t} + |\eta|^{\frac1s} < 2^{-\frac{1}{2v}} \ep^{\frac1v} \lambda$ 
we obtain
\begin{align*}
& \lambda^{-t} |y| < \ep^{\frac{t}{v}} \, 2^{-\frac{t}{2v}} \leqs \ep \, 2^{-\frac{1}{2}}, \\
& \lambda^{-s} |\eta| < \ep^{\frac{s}{v}} \, 2^{-\frac{s}{2v}} \leqs \ep \, 2^{-\frac{1}{2}}
\end{align*}
which gives $(\lambda^{-t} y, \lambda^{-s} \eta ) \in \rB_{\ep}$.  
Hence if $(x,\xi) \in U$ then $( x- \lambda^{-t} y, \xi - \lambda^{-s} \eta) \in V$ and we may use the estimate 
\eqref{eq:notinWFGFst2}. 
This gives for some $\mu > 0$, any $b > 0$ and a constant $C_r = C_{r,\mu,s,t} > 0$, using 
\eqref{eq:STFTvillkor1} and \eqref{eq:Ubound}
\begin{equation}\label{eq:estimateI2b}
\begin{aligned}
I_2 & = \iint_{\Omega_\lambda} e^{r \lambda} |V_\fy u ( \lambda^t (x- \lambda^{-t} y), \lambda^s (\xi - \lambda^{-s} \eta))| \, \left| V_\Phi a \left( \lambda^t x-\frac{y}{2}, \lambda^s \xi-\frac{\eta}{2},  \eta, -y \right) \right| \, \dd y \, \dd \eta \\
& = e^{-\lambda \kappa \mu 2 \alpha^{\frac1v} } \\
& \quad \times \iint_{\Omega_\lambda} e^{ (r+ \kappa \mu 2 \alpha^{\frac1v} ) \lambda} |V_\fy u ( \lambda^t (x- \lambda^{-t} y), \lambda^s (\xi - \lambda^{-s} \eta))| \, \left| V_\Phi a \left( \lambda^t x-\frac{y}{2}, \lambda^s \xi-\frac{\eta}{2},  \eta, -y \right) \right| \, \dd y \, \dd \eta \\
& \leqs C_r e^{-\lambda \kappa \mu 2 \alpha^{\frac1v}} \iint_{\Omega_\lambda} \left| V_\Phi a \left( \lambda^t x-\frac{y}{2}, \lambda^s \xi-\frac{\eta}{2},  \eta, -y \right) \right| \, \dd y \, \dd \eta \\
& \lesssim C_r e^{- \lambda \kappa \mu 2 \alpha^{\frac1v}} 
\iint_{\rr {2d}} 
e^{ \mu \left( \left| \lambda^t x-\frac{y}{2}\right|^{\frac1t} + \left| \lambda^s \xi-\frac{\eta}{2}\right|^{\frac1s} \right)  
- b \left( \left| \eta \right|^{\frac1s} + \left| y \right|^{\frac1t} \right) }
\, \dd y \, \dd \eta \\
& \leqs C_r e^{-\lambda \kappa \mu 2 \alpha^{\frac1v} + \lambda \kappa \mu 2 \alpha^{\frac1v}}
\iint_{\rr {2d}} 
e^{ \mu \kappa \left( \left|y\right|^{\frac1t} + \left| \eta \right|^{\frac1s} \right)  
- b \left( \left| \eta \right|^{\frac1s} + \left| y \right|^{\frac1t} \right) }
\, \dd y \, \dd \eta \\
& = C_r 
\iint_{\rr {2d}} 
e^{  (\kappa \mu- b) \left( \left| \eta \right|^{\frac1s} + \left| y \right|^{\frac1t} \right) }
\, \dd y \, \dd \eta \\
& \lesssim C_r
\end{aligned}
\end{equation}
provided $b > \kappa \mu$, 
for all $\lambda > 0$. 
Thus we have obtained the requested estimate for $I_2$. 
Combining \eqref{eq:estimateI1b} and \eqref{eq:estimateI2b}
we may conclude that $z_0 \notin \WF^{s,t}( a^w(x,D) u )$
and hence we have proved \eqref{eq:microlocal1}.  
\end{proof}

As a corollary we obtain the following generalization of \cite[Proposition~4.10]{Carypis1}.
Here we use a space of smooth symbols originally introduced in \cite[Definition~2.4]{Cappiello2} and denoted $\Gamma_{0,s}^\infty(\rr {2d})$, and which is identical to $\Gamma_{s,s}^{s,s; 0}(\rr {2d})$. 
For $s > \frac12$, $a \in \Gamma_{0,s}^\infty(\rr {2d})$ means that $a \in C^\infty(\rr {2d})$ and 
\begin{equation}\label{eq:symbolvillkor1}
|\pd \alpha a(z)| \lesssim h^{|\alpha|} \alpha!^s e^{\mu |z|^{\frac1s}}, \quad \alpha \in \nn {2d}, \quad z \in \rr {2d}, 
\end{equation}
for some $\mu > 0$ and for all $h > 0$. 
The space $\Gamma_{0,s}^\infty(\rr {2d})$ is characterized in \cite[Proposition~3.2]{Cappiello2} using the STFT as follows. Let $\Phi \in \Sigma_s(\rr {2d}) \setminus 0$ be arbitrary. 
Then $a \in \Gamma_{0,s}^\infty(\rr {2d})$ if and only if 
\begin{equation}\label{eq:STFTvillkor2}
| V_\Phi a(z,\zeta)  | \lesssim e^{\mu |z|^{\frac1s} - b |\zeta|^{\frac1s}}, \quad z,\zeta \in \rr {2d}, 
\end{equation}
for some $\mu > 0$ and all $b > 0$. 

If $a \in \Gamma_{0,s}^\infty(\rr {2d})$ then $a^w(x,D): \Sigma_s (\rr d) \to \Sigma_s (\rr d)$ is continuous
and extends uniquely to a continuous operator $a^w(x,D): \Sigma_s' (\rr d) \to \Sigma_s' (\rr d)$ according to \cite[Proposition~4.10]{Cappiello2}. 

\begin{cor}\label{cor:microlocalWFs}
If $s > \frac12$ and $a \in \Gamma_{0,s}^\infty(\rr {2d})$ then 
\begin{equation*}
\WF^s ( a^w(x,D) u ) \subseteq \WF^s (u), \quad u \in \Sigma_s'(\rr d). 
\end{equation*}
\end{cor}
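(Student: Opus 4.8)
The plan is to deduce this statement directly from Theorem~\ref{thm:microlocalWFs} by specializing to the isotropic case $t=s$. The first step is to make explicit the identification of symbol classes already noted in the text, namely $\Gamma_{0,s}^\infty(\rr{2d}) = \Gamma_{s,s}^{s,s;0}(\rr{2d})$. I would verify this by comparing the defining estimates \eqref{eq:symbolvillkor1} and \eqref{eq:symbolvillkor2}. Setting $t=s$ in \eqref{eq:symbolvillkor2}, the factorial factor $\alpha!^s\beta!^t$ becomes $\alpha!^s\beta!^s=\gamma!^s$ for the combined multi-index $\gamma=(\alpha,\beta)\in\nn{2d}$ (using $\gamma!=\alpha!\beta!$ and $|\gamma|=|\alpha|+|\beta|$), which matches the factorial factor $\alpha!^s$ attached to $\partial^\alpha$ in \eqref{eq:symbolvillkor1}. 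For the exponential weights, with $z=(x,\xi)$ one has, on the one hand, $|x|^{\frac1s}+|\xi|^{\frac1s}\leqs 2|z|^{\frac1s}$, and on the other hand $|z|^{\frac1s}\leqs (|x|+|\xi|)^{\frac1s}\leqs\kappa(s^{-1})(|x|^{\frac1s}+|\xi|^{\frac1s})$ by the $\kappa$-inequality recorded in the preliminaries. Since the parameter $\mu>0$ is existentially quantified in both definitions, the two forms of weight are interchangeable and the two symbol classes coincide.

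The second step is to check that the hypotheses align and then quote the theorem. When $t=s$ the condition $s+t>1$ of Theorem~\ref{thm:microlocalWFs} reads $2s>1$, i.e.\ $s>\tfrac12$, which is exactly the hypothesis here; moreover $\Sigma_s'(\rr d)=(\Sigma_s^s)'(\rr d)$. Applying Theorem~\ref{thm:microlocalWFs} with $t=s$ to $a\in\Gamma_{s,s}^{s,s;0}(\rr{2d})$ therefore yields
\begin{equation*}
\WF^{s,s}(a^w(x,D)u)\subseteq\WF^{s,s}(u),\qquad u\in\Sigma_s'(\rr d).
\end{equation*}
Finally I would invoke the identification $\WF^{s,s}(u)=\WF^s(u)$ for $u\in\Sigma_s'(\rr d)$, $s>\tfrac12$, recorded just after Definition~\ref{def:wavefrontGFst}, which rewrites the inclusion above as the asserted $\WF^s(a^w(x,D)u)\subseteq\WF^s(u)$.

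There is essentially no serious obstacle: the corollary is a direct specialization of the main theorem. The only point needing a line of care is the equality $\Gamma_{0,s}^\infty=\Gamma_{s,s}^{s,s;0}$, and in particular the interchangeability of the weights $e^{\mu(|x|^{\frac1s}+|\xi|^{\frac1s})}$ and $e^{\mu|z|^{\frac1s}}$, which is handled by the $\kappa$-inequality. Everything else reduces to substituting $t=s$ and citing the already-established coincidence of $\WF^{s,s}$ with $\WF^s$ in the isotropic regime.
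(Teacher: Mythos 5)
Your proposal is correct and follows exactly the route the paper intends: the corollary is obtained by specializing Theorem~\ref{thm:microlocalWFs} to $t=s$, using the identification $\Gamma_{0,s}^\infty(\rr{2d})=\Gamma_{s,s}^{s,s;0}(\rr{2d})$ (which the paper asserts without proof and you verify correctly via $\gamma!=\alpha!\,\beta!$ and the $\kappa$-inequality for the weights) together with $\WF^{s,s}(u)=\WF^s(u)$.
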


\begin{rem}\label{rem:microlocal}
It is interesting to compare the assumption $a \in \Gamma_{0,s}^\infty (\rr {2d})$, which is equivalent to 
the STFT estimates 
\begin{equation}\label{eq:STFTvillkor1b}
| V_\Phi a(z,\zeta)  | \lesssim e^{\mu |z|^{\frac1s} - b |\zeta|^{\frac1s}}
\end{equation}
for some $\mu > 0$ and all $b > 0$, 
with the estimates 
\begin{equation}\label{eq:STFTvillkor2}
|V_\Phi a (z,\zeta) | \lesssim e^{\frac{b}{4} |z|^{\frac1s} - b |\zeta|^{\frac1s}}
\end{equation}
for all $b > 0$. 

Condition \eqref{eq:STFTvillkor2} for all $b > 0$ has been shown to imply continuity $a^w(x,D): \Sigma_s(\rr d) \to \Sigma_s(\rr d)$ \cite[Lemma~6.5 and Proposition~6.6]{Wahlberg3}, but it does not imply microlocality with respect to $\WF^s$. 
In fact microlocality for operators of this type is contradicted by \cite[p.~556]{Carypis1} with $Q = i I_{2d}$ and $t \notin \pi \zo$. 
\end{rem}

The next result is another consequence of Theorem \ref{thm:microlocalWFs}.

\begin{cor}\label{cor:translationmodulationinvar}
Suppose $s,t > 0$ satisfy $s + t > 1$. 
For any $z \in \rr {2d}$ and any $u \in (\Sigma_t^s)'(\rr d)$ we have
\begin{equation*}
\WF^{t,s}( \Pi(z) u ) = \WF^{t,s}(u). 
\end{equation*}
\end{cor}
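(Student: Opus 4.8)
The plan is to realize the phase-space shift $\Pi(z_0)$, for $z_0 = (x_0,\xi_0) \in \rr {2d}$, as a Weyl pseudodifferential operator with symbol in the class $\Gamma_{t,s}^{s,t;0}(\rr {2d})$, and then invoke \thmref{thm:microlocalWFs}. First I would compute, from the Weyl quantization formula \eqref{eq:weylquantization}, the symbol of $M_{\xi_0} T_{x_0}$: the $\xi$-integral collapses to $\delta(x-y-x_0)$, and a direct calculation shows that the oscillatory symbol $a(x,\xi) = e^{i(\la x, \xi_0 \ra - \la x_0, \xi \ra)}$ satisfies $a^w(x,D) = e^{-i \la x_0,\xi_0 \ra /2}\, \Pi(z_0)$, hence $\Pi(z_0) = e^{i \la x_0,\xi_0 \ra /2}\, a^w(x,D)$ as operators on $(\Sigma_t^s)'(\rr d)$.

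The key step is to verify that $a \in \Gamma_{t,s}^{s,t;0}(\rr {2d})$, i.e.\ that $a$ satisfies \eqref{eq:symbolvillkor2}. Since $\partial_x^\alpha \partial_\xi^\beta a = i^{|\alpha|}(-i)^{|\beta|} \xi_0^\alpha x_0^\beta\, a$ and $|a| = 1$, we have $|\partial_x^\alpha \partial_\xi^\beta a(x,\xi)| \leqs |\xi_0|^{|\alpha|}\, |x_0|^{|\beta|}$, uniformly in $(x,\xi)$, so the exponential factor may be taken with any $\mu > 0$ (indeed with $\mu = 0$). It then suffices to show $|\xi_0|^{|\alpha|}\, |x_0|^{|\beta|} \lesssim h^{|\alpha+\beta|}\alpha!^s \beta!^t$ for every $h>0$, which follows from \eqref{eq:expestimate0}: applying that estimate with $h$ replaced by $h/|\xi_0|$ (resp.\ $h/|x_0|$) bounds $|\xi_0|^{|\alpha|}\alpha!^{-s} h^{-|\alpha|}$ and $|x_0|^{|\beta|}\beta!^{-t} h^{-|\beta|}$ by constants depending only on $h$. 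The point is that the super-exponential growth of $\alpha!^s$ (for any $s>0$) dominates the geometric factor $|\xi_0|^{|\alpha|}$.

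With symbol membership established, \thmref{thm:microlocalWFs} gives $\WF^{t,s}(a^w(x,D)u) \subseteq \WF^{t,s}(u)$. Multiplication by the unimodular constant $e^{i \la x_0,\xi_0 \ra /2}$ scales $V_\psi$ by a factor of modulus one and therefore leaves Definition~\ref{def:wavefrontGFst} unchanged, so $\WF^{t,s}(\Pi(z_0)u) \subseteq \WF^{t,s}(u)$. For the reverse inclusion I would use invertibility: one checks directly that $\Pi(z_0)^{-1} = e^{-i \la x_0,\xi_0 \ra}\, \Pi(-z_0)$, and $\Pi(-z_0)$ is of exactly the same type, with Weyl symbol $e^{i(-\la x,\xi_0 \ra + \la x_0,\xi \ra)} \in \Gamma_{t,s}^{s,t;0}(\rr {2d})$ by the same argument. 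Applying the inclusion just proved to $\Pi(z_0)u$ in place of $u$, with the operator $\Pi(z_0)^{-1}$, yields $\WF^{t,s}(u) = \WF^{t,s}(\Pi(z_0)^{-1}\Pi(z_0)u) \subseteq \WF^{t,s}(\Pi(z_0)u)$, and equality follows.

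The only genuine obstacle, modest as it is, is the symbol-class verification: one must confirm that a purely oscillatory symbol $e^{i(\la x,\xi_0 \ra - \la x_0,\xi \ra)}$ — which exhibits no decay in $(x,\xi)$ — nonetheless satisfies the Gevrey-type derivative bounds \eqref{eq:symbolvillkor2} for \emph{all} $h>0$. This rests entirely on \eqref{eq:expestimate0} and on the freedom to choose the growth exponent $\mu$ arbitrarily. Once this is in place, the corollary is an immediate application of \thmref{thm:microlocalWFs} together with invertibility of $\Pi(z_0)$.
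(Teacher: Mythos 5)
Your proof is correct and follows essentially the same route as the paper: both realize $\Pi(z)$ as a Weyl operator whose oscillatory symbol is shown to lie in $\Gamma_{t,s}^{s,t;0}(\rr{2d})$ via \eqref{eq:expestimate0}, apply Theorem \ref{thm:microlocalWFs}, and obtain the reverse inclusion from the identity $\Pi(z)^{-1}=e^{-i\la x,\xi\ra}\Pi(-z)$. The only cosmetic difference is that the paper absorbs the constant phase $e^{i\la x,\xi\ra/2}$ into the symbol rather than factoring it out.
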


\begin{proof}
By a calculation it is verified that $\Pi(x,\xi) = a_{x,\xi}^w(x,D)$ where
\begin{equation*}
a_{x,\xi} (y,\eta) = e^{ \frac{i}{2} \la x, \xi \ra + i \left( \la y, \xi \ra - \la x, \eta \ra \right)}, \quad (y,\eta) \in \rr {2d}. 
\end{equation*}

Using \eqref{eq:expestimate0}
we may estimate
\begin{align*}
\left| \partial_y^\alpha \partial_\eta^\beta a_{x,\xi} (y,\eta) \right|
& = |\xi^\alpha x^\beta| 
\leqs e^{s d h^{- \frac{1}{s}} + t d h^{- \frac{1}{t}}}  ( |(x,\xi)| h)^{|\alpha+\beta|} \alpha!^{s} \beta!^{t} \\
& = C_{t,s,h,d} ( |(x,\xi)| h)^{|\alpha+\beta|} \alpha!^{s} \beta!^{t}
\end{align*}
for any $h > 0$ and $\alpha, \beta \in \nn d$. 
This implies that $a_{x,\xi} \in \Gamma_{t,s}^{s,t; 0}(\rr {2d})$. 
Thus we may apply Theorem \ref{thm:microlocalWFs} which gives 
\begin{equation*}
\WF^{t,s}( \Pi(z) u ) \subseteq \WF^{t,s}(u). 
\end{equation*}
The opposite inclusion follows from $u = e^{- i \la x, \xi \ra} \Pi(-(x,\xi)) \Pi(x,\xi) u$. 
\end{proof}

\section{Global wave front sets of polynomials and generalizations}\label{sec:polynomials}

\begin{prop}\label{prop:WFstelementary}
If $s,t > 0$ satisfy $s + t > 1$ then:

\begin{enumerate}[\rm (i)]

\item for any $x \in \rr d$ and any $\alpha \in \nn d$
\begin{equation}\label{eq:diracWFst}
\WFg ( \pd \alpha \delta_x ) = \WF^{t,s}( \pd \alpha \delta_x ) =  \{ 0 \} \times ( \rr d \setminus 0 ); 
\end{equation}
\item for any $\alpha \in \nn d$
\begin{equation}\label{eq:monomialWFst}
\WFg ( x^\alpha ) = \WF^{t,s}( x^\alpha ) =  ( \rr d \setminus 0 ) \times \{ 0 \}; 
\end{equation}
\item for any $\xi \in \rr d$
\begin{equation}\label{eq:planewaveWFst}
\WFg (e^{i \la \cdot, \xi \ra} ) = \WF^{t,s}( e^{i \la \cdot, \xi \ra} ) =  ( \rr d \setminus 0 ) \times \{ 0 \}. 
\end{equation}
\end{enumerate}

\end{prop}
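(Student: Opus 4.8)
The plan is to reduce all three parts to a single short-time Fourier transform computation for $\pd\alpha\delta_0$ and then to propagate the result through the invariance theorems already available. First I would compute, for a window $\fy\in\Sigma_t^s(\rr d)\setminus 0$ and the conjugate-linear pairing, by differentiating $M_\xi T_x\fy$ and evaluating at the origin,
\begin{equation*}
V_\fy(\pd\alpha\delta_0)(x,\xi) = (2\pi)^{-\frac d2}(-1)^{|\alpha|}\overline{\sum_{\gamma\leqs\alpha}\binom\alpha\gamma(i\xi)^{\alpha-\gamma}\pd\gamma\fy(-x)}.
\end{equation*}
This is a polynomial of degree $|\alpha|$ in $\xi$ whose coefficients are derivatives of $\fy$ evaluated at $-x$; by the defining estimates of $\Sigma_t^s(\rr d)$ (cf. \eqref{eq:Sstderivativeestimate1}) each such coefficient is, for every $r>0$, bounded by a constant times $e^{-r|x|^{1/t}}$.

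For the upper bound I would note that along $\lambda\mapsto(\lambda^t x,\lambda^s\xi)$ with $x\neq 0$ the factor $\pd\gamma\fy(-\lambda^t x)$ contributes $e^{-r\lambda|x|^{1/t}}$, which on a small neighborhood $U$ of any $(x_0,\xi_0)$ with $x_0\neq 0$ (so $|x|\geqs c>0$ and $|\xi|$ bounded on $U$) dominates the polynomial growth $\lambda^{s|\alpha-\gamma|}$; this yields super-exponential decay and hence $\WF^{t,s}(\pd\alpha\delta_0)\subseteq\{0\}\times(\rr d\setminus 0)$. The same inclusion also follows at once from microlocality, Theorem \ref{thm:microlocalWFs}, since $\pd\alpha=i^{|\alpha|}D^\alpha=i^{|\alpha|}(\xi^\alpha)^w(x,D)$ with $\xi^\alpha\in\Gamma_{t,s}^{s,t; 0}(\rr {2d})$, together with \eqref{eq:diracWF}; analogously $\WFg(\pd\alpha\delta_0)\subseteq\WFg(\delta_0)=\{0\}\times(\rr d\setminus 0)$ by Gabor microlocality. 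For the lower bound I would invoke window-independence (Proposition \ref{prop:windowinvariance}) to fix $\fy$ with $\fy(0)\neq 0$ and restrict the display to $x=0$, where the STFT becomes the nonzero polynomial $P(\xi)=V_\fy(\pd\alpha\delta_0)(0,\xi)$ of exact degree $|\alpha|$ (leading coefficient a nonzero multiple of $\overline{\fy(0)}$). If $(0,\xi_0)\notin\WF^{t,s}(\pd\alpha\delta_0)$ for some $\xi_0\neq 0$, then $e^{r\lambda}|P(\lambda^s\xi)|$ would stay bounded for all $(0,\xi)$ in a neighborhood of $(0,\xi_0)$ and all $\lambda>0$; but a nonzero polynomial cannot vanish on an open set, so picking $\xi$ near $\xi_0$ with $\xi^\alpha\neq 0$ gives $|P(\lambda^s\xi)|\sim|\xi^\alpha|\,\lambda^{s|\alpha|}$, a contradiction, and the same polynomial forces failure of super-polynomial decay for $\WFg$. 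This determines both wave front sets of $\pd\alpha\delta_0$; Corollary \ref{cor:translationmodulationinvar} (and translation invariance of $\WFg$) then transfers them to $\pd\alpha\delta_x=T_x\pd\alpha\delta_0$, proving (i).

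Part (ii) I would deduce from (i) by Fourier transform. Since $\widehat{x^\alpha}=i^{|\alpha|}(2\pi)^{\frac d2}\pd\alpha\delta_0$, Proposition \ref{prop:WFstsymplectic}(i) gives $\WF^{s,t}(\pd\alpha\delta_0)=\J\,\WF^{t,s}(x^\alpha)$, and since (i) holds for every admissible index pair we have $\WF^{s,t}(\pd\alpha\delta_0)=\{0\}\times(\rr d\setminus 0)$; using $\J^{-1}(0,\xi)=(-\xi,0)$ this yields $\WF^{t,s}(x^\alpha)=(\rr d\setminus 0)\times\{0\}$, and the identical argument with \eqref{eq:metaplecticWFG} handles $\WFg$. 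Part (iii) then follows from (ii) with $\alpha=0$: writing $e^{i\la\cdot,\xi\ra}=\Pi(0,\xi)\,1$, Corollary \ref{cor:translationmodulationinvar} and modulation invariance of $\WFg$ give $\WF^{t,s}(e^{i\la\cdot,\xi\ra})=\WF^{t,s}(1)=(\rr d\setminus 0)\times\{0\}$ and likewise for $\WFg$.

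I expect the main obstacle to be the lower bounds, namely checking that the entire frequency axis (resp. space axis) lies in the wave front set rather than a proper subset of it. The subtlety is that non-membership is tested against a whole anisotropic neighborhood, so one must rule out directional cancellations; the decisive point is that the restriction of the STFT to the relevant axis is a fixed nonzero polynomial, which cannot vanish on any open set of directions and therefore grows polynomially---hence neither super-exponentially nor super-polynomially decaying---along suitable curves. A secondary point requiring care is the bookkeeping of the index order $(t,s)$ versus $(s,t)$ when passing through the Fourier transform in part (ii).
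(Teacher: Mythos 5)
Your proof is correct and follows essentially the route the paper intends: the statement is left to the reader, but the proof of Proposition \ref{prop:polynomial} immediately below proceeds by exactly your argument (restrict the STFT to $x=0$ to obtain a nonzero polynomial whose top-degree part cannot vanish on an open set of directions, then transfer to $x^\alpha$ by the Fourier invariance of Proposition \ref{prop:WFstsymplectic}(i) and handle translations/modulations via Corollary \ref{cor:translationmodulationinvar}). The only cosmetic point is that \eqref{eq:diracWF}, invoked in your alternative microlocality argument, is stated only for $t=s$; for general $(t,s)$ the needed inclusion $\WF^{t,s}(\delta_0)\subseteq\{0\}\times(\rr d\setminus 0)$ is the $\alpha=0$ case of your own direct computation, so nothing is lost.
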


Proposition \ref{prop:WFstelementary} follows from the arguments in Section \ref{sec:GelfandShilovWF}, the details of the proof are left to the reader. 
We fix attention on the following generalizations 
of Proposition \ref{prop:WFstelementary}. 

Consider a polynomial on $\rr d$
\begin{equation}\label{eq:polynomial1}
p(x) = \sum_{\alpha \in \nn d, \, |\alpha| \leqs m} c _\alpha x^{\alpha}, \quad x \in \rr d, 
\end{equation}
with $c_\alpha \in \co$ and $m \in \no \setminus 0$. 

\begin{prop}\label{prop:polynomial}
Suppose $s,t > 0$ satisfy $s + t  > 1$, 
let $p$ be the polynomial \eqref{eq:polynomial1}
and define
\begin{equation*}
u = \sum_{\alpha \in \nn d, \, |\alpha| \leqs m} c _\alpha D^\alpha \delta_0 \in \cS'(\rr d).
\end{equation*}
Then 
\begin{equation}\label{eq:diracpolynomial1}
\WFg( u ) 
= \WF^{t,s}( u ) 
= \{ 0 \} \times ( \rr d \setminus 0 ) 
\end{equation}
and
\begin{equation}\label{eq:polynomial2}
\WFg( p ) 
= \WF^{t,s}( p ) 
= ( \rr d \setminus 0 ) \times \{ 0 \}. 
\end{equation}
\end{prop}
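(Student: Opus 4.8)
The plan is to obtain both wave front sets by combining an immediate upper inclusion, coming from subadditivity together with the elementary computations in Proposition \ref{prop:WFstelementary}, with a lower inclusion that rules out cancellation by evaluating the short-time Fourier transform on the relevant coordinate axis. Note first that, although $\widehat u=(2\pi)^{-d/2}p$, the Fourier transform interchanges the two indices by Proposition \ref{prop:WFstsymplectic}(i), so the identities \eqref{eq:diracpolynomial1} and \eqref{eq:polynomial2} do not follow from one another and must be treated independently. Throughout I assume $p\not\equiv0$, i.e. some $c_\alpha\neq0$, the statement being vacuous otherwise, and I write $m'=\deg p$ with $p_{m'}\not\equiv0$ its top-degree homogeneous part.

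For the upper inclusions I would simply invoke subadditivity. For $\WF^{t,s}$ this is \eqref{eq:WFstsublinear}, and the Gabor wave front set enjoys the same property. Since $u$ is a finite linear combination of the $D^\alpha\delta_0$, Proposition \ref{prop:WFstelementary}(i) gives $\WF^{t,s}(u)\subseteq\bigcup_{c_\alpha\neq0}\WF^{t,s}(D^\alpha\delta_0)=\{0\}\times(\rr d\setminus0)$, and likewise for $\WFg$; symmetrically, $p$ is a finite combination of monomials $x^\alpha$, so Proposition \ref{prop:WFstelementary}(ii) yields $\WF^{t,s}(p)\subseteq(\rr d\setminus0)\times\{0\}$ and the same for $\WFg$.

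The substance is the reverse inclusion, that is, the absence of cancellation. I would fix a window $\psi\in\Sigma_t^s(\rr d)\setminus0$ with $\psi(0)\neq0$ and $\int\psi\neq0$; such a window exists (for instance a suitable translate of $|\fy|^2$ with $\fy\in\Sigma_t^s\setminus0$, the Gelfand--Shilov class being stable under products and translations), and by Proposition \ref{prop:windowinvariance} the choice is immaterial. A short computation (Leibniz's rule for $u$, a change of variables for $p$) then shows that the restriction of the STFT to the axes is polynomial,
\begin{equation*}
V_\psi u(0,\xi)=(2\pi)^{-d/2}\overline{Q(\xi)},\qquad V_\psi p(x,0)=(2\pi)^{-d/2}R(x),
\end{equation*}
where $Q$ and $R$ are polynomials whose degree-$m'$ homogeneous parts are $\psi(0)\,p_{m'}(\xi)$ and $\overline{\int\psi}\,p_{m'}(x)$, hence not identically zero.

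To conclude, suppose $\xi_0\neq0$. If $(0,\xi_0)\notin\WF^{t,s}(u)$ there is a neighborhood $U$ of $(0,\xi_0)$ on which $e^{r\lambda}|V_\psi u(0,\lambda^s\xi)|$ is bounded for all $r>0$; since the zero set of $p_{m'}$ is a nowhere dense cone I may choose $\xi$ with $(0,\xi)\in U$ and $p_{m'}(\xi)\neq0$, and then $|V_\psi u(0,\lambda^s\xi)|=(2\pi)^{-d/2}|Q(\lambda^s\xi)|$ grows like $\lambda^{sm'}$, a contradiction. Choosing such a direction inside any conic neighborhood of $(0,\xi_0)$ likewise defeats super-polynomial decay, so $(0,\xi_0)\in\WFg(u)$ as well; this gives \eqref{eq:diracpolynomial1}. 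The polynomial $R$ plays the symmetric role for $p$: if $(x_0,0)\notin\WF^{t,s}(p)$ with $x_0\neq0$, then $e^{r\lambda}|R(\lambda^t x)|$ would be bounded for $x$ near $x_0$, which fails as soon as $p_{m'}(x)\neq0$, and the conic analogue settles $\WFg(p)$, yielding \eqref{eq:polynomial2}. The one delicate point is exactly this non-cancellation step, where it is essential that the axial restriction of the STFT be a genuinely nonzero polynomial; this is guaranteed by the choice $\psi(0)\neq0$ and $\int\psi\neq0$, which keep the top-degree part of $p$ intact.
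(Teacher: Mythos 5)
Your proof is correct and follows essentially the same route as the paper: the upper inclusions come from subadditivity together with Proposition \ref{prop:WFstelementary}, and the lower inclusions from the observation that the restriction of the STFT to the relevant coordinate axis is a polynomial whose top-degree homogeneous part $p_{m'}$ survives (multiplied by $\psi(0)$, resp. $\overline{\textstyle\int\psi}$), so that it grows polynomially along $\lambda \mapsto (0,\lambda^s \xi)$, resp. $\lambda \mapsto (\lambda^t x, 0)$, in every direction where $p_{m'} \neq 0$; the nowhere-density of the zero set of $p_{m'}$ and the closedness of the wave front sets cover the remaining directions, exactly as in the paper. The one place where you diverge is \eqref{eq:polynomial2}: the paper deduces it from \eqref{eq:diracpolynomial1} via $\widehat u = (2\pi)^{-d/2} p$, \eqref{eq:metaplecticWFG} and Proposition \ref{prop:WFstsymplectic}\,(i), whereas you prove it directly. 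Your stated reason for doing so --- that the Fourier transform swaps the two indices, so the two identities are independent --- is not right: the hypothesis $s+t>1$ and the conclusion \eqref{eq:diracpolynomial1} are symmetric under interchanging $t$ and $s$, so one may apply \eqref{eq:diracpolynomial1} to the pair $(s,t)$ and then Proposition \ref{prop:WFstsymplectic}\,(i) to get $\WF^{t,s}(p) = \J\, \WF^{s,t}(u) = (\rr d \setminus 0) \times \{0\}$. Your direct argument is nevertheless valid; it costs you the extra hypothesis $\int\psi \neq 0$ on the window, which is harmless by Proposition \ref{prop:windowinvariance}. On the other hand you are slightly more careful than the paper in working with $m' = \deg p$ rather than $m$, which matters in the degenerate case $p_m \equiv 0$.
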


\begin{proof}
Fourier transformation gives $\widehat u = (2 \pi)^{- \frac{d}{2} } p$ so \eqref{eq:polynomial2} is a consequence of \eqref{eq:diracpolynomial1} and the Fourier invariances \eqref{eq:metaplecticWFG} and Proposition \ref{prop:WFstsymplectic} (i). 
Thus it suffices to show \eqref{eq:diracpolynomial1}. 

From Proposition \ref{prop:WFstelementary} (i) and \eqref{eq:WFstsublinear} we obtain
\begin{equation*}
\WF^{t,s}( u ) 
\subseteq \{ 0 \} \times ( \rr d \setminus 0 )
\end{equation*}
and \eqref{eq:GaborGSinclusion} gives
\begin{equation*}
\WFg( u ) 
\subseteq \WF^{v,v}( u ) 
\subseteq \{ 0 \} \times ( \rr d \setminus 0 )
\end{equation*}
where $v = \max(t,s) > \frac12$. 
Hence it suffices to show 
\begin{equation}\label{eq:WFstdiracsum}
\{ 0 \} \times ( \rr d \setminus 0 ) \subseteq \WF^{t,s}( u ) 
\end{equation}
and 
\begin{equation}\label{eq:WFgdiracsum}
\{ 0 \} \times ( \rr d \setminus 0 ) \subseteq \WFg ( u ). 
\end{equation}

Let $\fy \in \Sigma_t^s (\rr d) \setminus 0$ satisfy $\fy(0) \neq 0$. 
We have
\begin{align*} 
& V_\fy u (0,\xi) \\
& = (2 \pi)^{- \frac{d}{2}} \sum_{|\alpha| \leqs m} c _\alpha \sum_{\beta \leqs \alpha} \binom{\alpha}{\beta} \xi^\beta \overline{D^{\alpha-\beta} \fy(0)} \\
& = (2 \pi)^{- \frac{d}{2}} \left( \sum_{|\alpha| = m}  c _\alpha \xi^\alpha \overline{\fy (0)}  
+ \sum_{|\alpha| = m}  c _\alpha \sum_{\beta < \alpha} \binom{\alpha}{\beta} \xi^\beta \overline{ D^{\alpha-\beta} \fy(0)}
+ \sum_{|\alpha| < m}  c _\alpha \sum_{\beta \leqs \alpha} \binom{\alpha}{\beta} \xi^\beta \overline{ D^{\alpha-\beta} \fy(0)}
\right). 
\end{align*}

Define the principal part of $p$ as 
\begin{equation*}
p_m(x) = \sum_{|\alpha| = m} c _\alpha x^{\alpha}. 
\end{equation*}

If $\xi \in \rr d \setminus 0$, $p_m(\xi) \neq 0$ and $\lambda > 0$ then
\begin{align*} 
& (2 \pi)^{\frac{d}{2}} V_\fy u (0,\lambda^s \xi) \\
& = \lambda^{s m} p_m(\xi) \overline{\fy (0)}  
+ \underbrace{\sum_{|\alpha| = m}  c _\alpha \sum_{\beta < \alpha} \binom{\alpha}{\beta} \lambda^{s |\beta|} \xi^\beta \overline{ D^{\alpha-\beta} \fy(0)}
+ \sum_{|\alpha| < m}  c _\alpha \sum_{\beta \leqs \alpha} \binom{\alpha}{\beta} \lambda^{s |\beta|} \xi^\beta  \overline{ D^{\alpha-\beta} \fy(0)}}_{:=R}. 
\end{align*}
Since $R$ contains terms $\lambda^{s k}$ where $k < m$
this implies that $(0,\xi) \in \WFg( u)$ and $(0,\xi) \in \WF^{t,s}( u)$. 

If instead $\xi \in \rr d \setminus 0$ and $p_m(\xi) = 0$, then for any $\ep > 0$ the ball $\rB_\ep(\xi)$ contains $\eta \in \rr d \setminus 0$ such that $p_m(\eta) \neq 0$. 
In fact $p_m$ extends to an entire function on $\cc d$ whose zeros are isolated. 
From the argument above it follows that $(0,\eta) \in \WFg( u)$ and $(0,\eta) \in \WF^{t,s}( u)$. 
It follows that 
$(0,\xi) \in \WFg( u)$ and $(0,\xi) \in \WF^{t,s}( u)$. 
We have now shown \eqref{eq:WFstdiracsum} and \eqref{eq:WFgdiracsum}. 
\end{proof}

In order to generalize Propositions \ref{prop:WFstelementary} and \ref{prop:polynomial} we would like to study series of the form
\begin{equation}\label{eq:diracseries1}
u = \sum_{\alpha \in \nn d} c _\alpha D^\alpha \delta_0 
\end{equation}
containing infinitely many nonzero terms $c_\alpha \in \co$, 
and the corresponding power series
\begin{equation}\label{eq:maclaurinseries1}
f(x) = \sum_{\alpha \in \nn d} c _\alpha x^\alpha,
\end{equation}
under suitable hypotheses on the coefficients $c_\alpha \in \co$.

First we note that $u \notin \cS'(\rr d)$. 
In fact we have for $\fy \in \cS(\rr d)$
\begin{equation}\label{eq:series1}
(u,\fy) = \sum_{\alpha \in \nn d} c _\alpha i^{|\alpha|} \pd \alpha \overline{\fy (0)}
\end{equation}
and it is known that a smooth function $\fy$ may have arbitrary growth of $\alpha \mapsto \pd \alpha \fy (0)$ 
(Borel's lemma \cite[Theorem~1.2.6]{Hormander0}). 
Thus the sum \eqref{eq:series1} is not guaranteed to converge for $\fy \in \cS(\rr d)$, unless the series is finite. 
The series \eqref{eq:diracseries1} does not converge in $\cS'(\rr d)$, and $u \notin \cS'(\rr d)$ if the series is infinite. 
For the same reason \eqref{eq:maclaurinseries1} does not converge in $\cS'(\rr d)$, and $f \notin \cS'(\rr d)$. 
(Note that $\wh u = (2 \pi)^{-\frac{d}{2}} f$ when the series is finite.)

Nevertheless it is possible to state conditions on $\{ c_\alpha \}_{\alpha \in \nn d}$ that are sufficient for $u \in (\Sigma_t^s)'(\rr d)$. 
Suppose $s > 0$ and 
\begin{equation}\label{eq:seriescondition1}
\sum_{\alpha \in \nn d} |c _\alpha | \, r^{|\alpha|} \alpha!^s < \infty
\end{equation}
for some $r > 0$. 
Then for $t > 0$ such that $s + t > 1$, and $\fy \in \Sigma_t^s(\rr d)$, we have 
\begin{align*}
|(u,\fy)| \leqs \sum_{\alpha \in \nn d} |c _\alpha | \, |\pd \alpha \fy (0)|
\leqs \nm \fy{\mathcal S_{t,h}^s} \sum_{\alpha \in \nn d} |c _\alpha | \alpha!^s h^{|\alpha|}
\lesssim \nm \fy{\mathcal S_{t,h}^s}
\end{align*}
provided $h \leqs r$. Thus the series \eqref{eq:diracseries1} converges in $(\Sigma_t^s)'(\rr d)$ and $u \in (\Sigma_t^s)'(\rr d)$.  
We may also conclude that \eqref{eq:maclaurinseries1} converges in $(\Sigma_s^t)'(\rr d)$, $f \in (\Sigma_s^t)'(\rr d)$, and 
the Fourier transform acts termwise as $\widehat u = (2 \pi)^{-\frac{d}{2}} f \in (\Sigma_s^t)'(\rr d)$.

We may distinguish two rather different situations 
under condition \eqref{eq:seriescondition1}. 
Namely, if $s > 1$ then $u \in \cE_s'(\rr d)$, with support in the origin, cf. \cite[Example~1.5.3 and 1.6.5]{Rodino1}. 
The absolutely convergent series $f$ satisfies 
\begin{equation*}
|f(x)| \lesssim e^{a |x|^{\frac1s}}, \quad x \in \rr d, 
\end{equation*}
for some $a > 0$ in $\rr d$, cf. \eqref{eq:PWSGevrey}, 
and more precise bounds in $\cc d$ can be deduced from the Paley--Wiener--Schwartz theorem in $\cE_s'(\rr d)$,
cf. \cite[Theorem~1.6.7]{Rodino1}, \cite{Komatsu1,Sobak1}. 

If instead $0 < s \leqs 1$
the series \eqref{eq:maclaurinseries1} also converges absolutely for any $x \in \rr d$, 
and is an entire function. 
In fact
\begin{align*}
\sum_{\alpha \in \nn d} \left| c _\alpha x^\alpha \right|
& \leqs \sum_{\alpha \in \nn d} | c _\alpha| \, r^{|\alpha|} \alpha!^s \left(\frac{ (r^{-1} |x|)^{\frac{|\alpha|}{s}} }{\alpha!} \right)^s \\
& \leqs \sum_{\alpha \in \nn d} | c _\alpha| \, r^{|\alpha|} \alpha!^s \left(\frac{ \left( d (r^{-1} |x|)^{\frac{1}{s}} \right)^{|\alpha|}}{|\alpha|!} \right)^s \\
& \leqs e^{  s d r^{-\frac1s} |x|^{\frac1s} } \sum_{\alpha \in \nn d} | c _\alpha| \, r^{|\alpha|} \alpha!^s \\
& \lesssim e^{  s d r^{-\frac1s} |x|^{\frac1s} }
\end{align*}
which also reveals the growth bound 
\begin{equation*}
|f(x)| \lesssim e^{  s d r^{-\frac1s} |x|^{\frac1s} }, \quad x \in \rr d. 
\end{equation*}

But the definition of support of $u \in (\Sigma_t^s)'(\rr d)$ breaks down if $s \leqs 1$. 
Consider as an example for $z \in \cc d$
\begin{equation*}
u = \sum_{\alpha \in \nn d} \frac{(-\overline{z})^\alpha}{\alpha!} D^\alpha \delta_0. 
\end{equation*}
Condition \eqref{eq:seriescondition1} is satisfied if $r < |z|^{-1}$, and thus $u \in (\Sigma_t^s)'(\rr d)$. 
The corresponding test functions $\fy \in \Sigma_t^s(\rr d)$ extend to entire functions on $\cc d$. 
From Maclaurin expansion we have 
\begin{equation*}
(u, \fy) 
= \sum_{\alpha \in \nn d} \frac{(-\overline{z})^\alpha}{\alpha!} \overline{D^\alpha \fy (0)}
= \overline{ \sum_{\alpha \in \nn d} \frac{(iz)^\alpha}{\alpha!} \pd \alpha \fy (0)}
= \overline{\fy(iz)}.
\end{equation*}
Thus $u$ may be regarded as a delta distribution at the point $i z \in \cc d$. 

In the following result we require that \eqref{eq:seriescondition1} holds for all $r > 0$ 
which precludes the preceding example.

\begin{prop}\label{prop:diracseries1}
Let $s,t > 0$ satisfy $s + t > 1$, 
suppose that \eqref{eq:seriescondition1} holds for all $r > 0$, 
and define $u \in (\Sigma_t^s)'(\rr d)$ and $f \in (\Sigma_s^t)'(\rr d)$ by \eqref{eq:diracseries1} and \eqref{eq:maclaurinseries1} respectively. 
Then 
\begin{equation}\label{eq:diracseries2}
\WF^{t,s}( u ) 
\subseteq \{ 0 \} \times ( \rr d \setminus 0 )
\end{equation}
and
\begin{equation}\label{eq:taylorseries2}
\WF^{s,t}( f ) 
\subseteq ( \rr d \setminus 0 ) \times \{ 0 \}. 
\end{equation}
\end{prop}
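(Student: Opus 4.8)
The plan is to deduce \eqref{eq:taylorseries2} from \eqref{eq:diracseries2} and then prove the latter directly. Since the series \eqref{eq:diracseries1} converges in $(\Sigma_t^s)'(\rr d)$ and Fourier transformation acts termwise, $\wh u = (2\pi)^{-\frac d2} f$. By Proposition \ref{prop:WFstsymplectic}~(i) we have $\WF^{s,t}(\wh u) = \J \WF^{t,s}(u)$, and since $\J(\{0\}\times(\rr d\setminus 0)) = (\rr d\setminus 0)\times\{0\}$ and the wave front set is unaffected by multiplication with the nonzero scalar $(2\pi)^{-\frac d2}$, the inclusion \eqref{eq:diracseries2} for $u$ immediately yields \eqref{eq:taylorseries2} for $f$. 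Thus it suffices to prove \eqref{eq:diracseries2}, i.e. to show that every $(x_0,\xi_0)$ with $x_0\neq 0$ lies outside $\WF^{t,s}(u)$.

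Next I would record the short-time Fourier transform of $u$. Repeating the Leibniz computation behind the finite-sum formula in the proof of Proposition \ref{prop:polynomial} and summing over $\alpha$ (legitimate by the absolute convergence verified a posteriori), one obtains for $\psi\in\Sigma_t^s(\rr d)\setminus 0$
\[
(2\pi)^{\frac d2} V_\psi u(X,\Xi) = \sum_{\alpha\in\nn d} c_\alpha \sum_{\beta\leqs\alpha}\binom{\alpha}{\beta}\Xi^{\alpha-\beta}\,\overline{(D^\beta\psi)(-X)}.
\]
Fixing $(x_0,\xi_0)$ with $x_0\neq 0$ I would choose a bounded open neighbourhood $U$ on which $|x|\geqs\delta>0$ and $|\xi|\leqs\mu$. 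Substituting $X=\lambda^t x$, $\Xi=\lambda^s\xi$ with $(x,\xi)\in U$, I would bound the window factor by \eqref{eq:Sstderivativeestimate1}, namely $|(D^\beta\psi)(-\lambda^t x)|\leqs C_{\rho,h}\, h^{|\beta|}\beta!^s e^{-\rho\delta^{\frac1t}\lambda}$ for all $\rho,h>0$, and the coefficients by the minimal-type bound $|c_\alpha|\leqs C_r\, r^{-|\alpha|}\alpha!^{-s}$, which is immediate from \eqref{eq:seriescondition1} holding for every $r>0$.

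The core of the argument is the factorial bookkeeping after setting $\gamma=\alpha-\beta$. The decisive inequality is $(\beta+\gamma)!\geqs\beta!\,\gamma!$, which gives $\alpha!^{-s}\beta!^s\leqs\gamma!^{-s}$; combined with $\binom{\alpha}{\beta}\leqs 2^{|\alpha|}$ this factorizes the double sum into a convergent geometric series in $\beta$ (needing $2h<r$) times the frequency sum $\sum_\gamma (2\lambda^s\mu/r)^{|\gamma|}\gamma!^{-s}$. By \eqref{eq:expestimate0}, exactly as in the minimal-type estimate for $f$ preceding the statement, this last sum is $\lesssim e^{sd(2\mu/r)^{\frac1s}\lambda}$ — crucially \emph{linear} in $\lambda$, because the scaling $\lambda^s$ cancels the order $\tfrac1s$. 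Hence
\[
|V_\psi u(\lambda^t x,\lambda^s\xi)|\leqs C_{r,h,\rho}\,e^{\lambda\left(sd(2\mu/r)^{\frac1s}-\rho\delta^{\frac1t}\right)},\qquad (x,\xi)\in U,\ \lambda>0.
\]
Given any $r'>0$ I would first pick $r$ so large that $sd(2\mu/r)^{\frac1s}$ is small and then $\rho$ so large that the exponent bracket is $\leqs -r'$; this is \eqref{eq:notinWFGFst1}, so $(x_0,\xi_0)\notin\WF^{t,s}(u)$ and \eqref{eq:diracseries2} follows.

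As for the main obstacle, I expect it to be precisely this factorial accounting. The tempting bound $\beta!^s\leqs\alpha!^s$ goes the wrong way and would manufacture an illusory super-exponential growth of the form $e^{c\lambda^{sp}}$, which for $sp>1$ could never be absorbed by the window decay; one is forced instead to match the window regularity factor $\beta!^s$ against the coefficient decay $\alpha!^{-s}$ through $(\beta+\gamma)!\geqs\beta!\,\gamma!$, so that the surviving $\gamma!^{-s}$ damps the high-frequency powers $\lambda^{s|\gamma|}$ into a minimal-type exponential with arbitrarily small linear rate. A secondary, routine point is justifying the termwise passage to the series representation of $V_\psi u$, which the same estimate settles via absolute convergence.
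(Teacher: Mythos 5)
Your proposal is correct and follows essentially the same route as the paper: reduce to \eqref{eq:diracseries2} via Proposition \ref{prop:WFstsymplectic}~(i), expand $V_\psi u$ by Leibniz, extract the linear-in-$\lambda$ super-exponential decay from \eqref{eq:Sstderivativeestimate1}, and tame the frequency powers $\lambda^{s|\gamma|}$ through the factorial inequality $(\alpha-\beta)!\,\beta!\leqs\alpha!$ together with \eqref{eq:expestimate0}, which is exactly the decisive step the paper uses (there written as $(\alpha-\beta)!^s\leqs\alpha!^s\beta!^{-s}$ with the $\alpha!^s$ absorbed into the series \eqref{eq:seriescondition1} rather than into a pointwise bound $|c_\alpha|\leqs C_r r^{-|\alpha|}\alpha!^{-s}$). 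The only differences are cosmetic bookkeeping choices, and your diagnosis of the main obstacle matches the paper's argument.
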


\begin{proof}
Since $\widehat u = (2 \pi)^{-\frac{d}{2}} f \in (\Sigma_s^t)'(\rr d)$ it again suffices to show \eqref{eq:diracseries2} by the Fourier invariance Proposition \ref{prop:WFstsymplectic} (i). 
If $s > 1$ the result follows from Proposition \ref{prop:WFsfreqaxis}, cf. \eqref{eq:subinclusion1}.  
Consider the general case $s > 0$. 

Let $\fy \in \Sigma_t^s(\rr d) \setminus 0$,  
let $(x_0,\xi_0) \in T^* \rr d \setminus 0$ satisfy $x_0 \neq 0$, and let $(x_0,\xi_0) \in U$ where $U \subseteq \rr {2d}$ is open and satisfies 
\begin{equation*}
\sup_{(x,\xi) \in U} |\xi| \leqs |\xi_0| + 1 := a, \quad \inf_{(x,\xi) \in U} |x| \geqs \ep > 0. 
\end{equation*}
If $(x,\xi) \in U$ then we obtain,
using the estimates \eqref{eq:Sstderivativeestimate1}, 
for any $h, r, \lambda > 0$
\begin{align*}
(2 \pi)^{\frac{d}{2}} |V_\fy u( \lambda^t x, \lambda^s \xi )|
& = \left| \sum_{\alpha \in \nn d} c _\alpha  \sum_{\beta \leqs \alpha} \binom{\alpha}{\beta} \lambda^{s |\beta|} \xi^\beta \overline{D^{\alpha-\beta} \fy(- \lambda^t x)} \right| \\
& \leqs C_{r,h} \sum_{\alpha \in \nn d} | c _\alpha | \sum_{\beta \leqs \alpha} \binom{\alpha}{\beta} \lambda^{s |\beta|} |\xi|^{|\beta|} 
h^{|\alpha-\beta|} (\alpha-\beta)!^s e^{- 2 r \ep^{-\frac1t}  \lambda |x|^{\frac1t}} \\
& \leqs C_{r,h} e^{- 2 r \lambda} \sum_{\alpha \in \nn d} | c _\alpha | \, h^{|\alpha|} \alpha!^s \sum_{\beta \leqs \alpha} \binom{\alpha}{\beta} a^{|\beta|} 
\left( \frac{ \left( \lambda h^{-\frac1s} \right)^{|\beta|}}{\beta!} \right)^{s}  \\
& \leqs C_{r,h} e^{- 2 r \lambda} \sum_{\alpha \in \nn d} | c _\alpha | h^{|\alpha|} \alpha!^s \sum_{\beta \leqs \alpha} \binom{\alpha}{\beta} a^{|\beta|} 
\left( \frac{ \left( d \lambda h^{-\frac1s} \right)^{|\beta|}}{|\beta|!} \right)^{s}  \\
& \leqs C_{r,h} e^{- 2 \lambda r  + \lambda s d h^{-\frac1s} } \sum_{\alpha \in \nn d} | c _\alpha | h^{|\alpha|} \alpha!^s \sum_{\beta \leqs \alpha} \binom{\alpha}{\beta} a^{|\beta|}  \\
& = C_{r,h} e^{- 2 \lambda r  + \lambda s d h^{-\frac1s} } \sum_{\alpha \in \nn d} | c _\alpha | ( (a+1) h) ^{|\alpha|} \alpha!^s. 
\end{align*}

If we pick $h = r^{-s} s^{s} d^{s}$ and use \eqref{eq:seriescondition1} then 
\begin{align*}
|V_\fy u( \lambda^t x, \lambda^s \xi )|
& \leqs C_{r} e^{- \lambda r } \sum_{\alpha \in \nn d} | c _\alpha | ( (a+1) h) ^{|\alpha|} \alpha!^s \\
& \leqs C_{r}' e^{- \lambda r }
\end{align*}
for a new constant $C_r' > 0$. Since $(x,\xi) \in U$ and $r > 0$ are arbitrary we have shown $(x_0,\xi_0) \notin \WF^{t,s} (u)$ which proves \eqref{eq:diracseries2}. 
\end{proof}

\begin{rem}\label{rem:suffequality1}
In dimension $d=1$ we can state conditions that are sufficient for equality in \eqref{eq:diracseries2} and 
\eqref{eq:taylorseries2}. 
In fact suppose 
\begin{equation*}
u = \sum_{k = 0}^\infty c _k D^k \delta_0 
\end{equation*}
where \eqref{eq:seriescondition1} is satisfied for all $r > 0$, and 
either $c_{2k} = 0$ for all $k \geqs 0$ or $c_{2k+1} = 0$ for all $k \geqs 0$. 
Then for $\fy \in \Sigma_t^s(\ro)$ 
\begin{equation*}
(\check u, \fy) 
= \sum_{k = 0}^\infty c _k ( D^k \delta_0, \check \fy)
= \sum_{k = 0}^\infty c _k (-1)^k ( D^k \delta_0, \fy)
= \pm (u,\fy)
\end{equation*}
which means that $u$ is either even or odd. 
By \eqref{eq:evensteven1} we have $\WF^{t,s}(u) = - \WF^{t,s}(u)$, and since $\WF^{t,s}(u) \neq \emptyset$ due to $u \notin \Sigma_t^s(\ro)$, we must have 
\begin{equation*}
\WF^{t,s}( u ) 
= \{ 0 \} \times ( \ro \setminus 0 ). 
\end{equation*}
Equality in \eqref{eq:taylorseries2} follows. 
\end{rem}

We can also get equalities for $\WF^{t,s}(u)$ and $\WF^{s,t}(f)$ in terms of the subset 
$V_s(u)$ defined in \eqref{eq:frequencydecay0}. 
Using $\widehat u = (2 \pi)^{-\frac{d}{2}} f \in (\Sigma_s^t)'(\rr d)$ we may rephrase \eqref{eq:frequencydecay0} as follows: 
$x_0 \in \rr d \setminus 0$ satisfies $x_0 \notin V_s(u)$ if there exists an open set $U \subseteq \rr d \setminus 0$ such that $x_0 \in U$ and 
\begin{equation}\label{eq:Scomplement}
\sup_{x \in U, \ \lambda > 0} e^{ r \lambda } |f( \lambda^s x)| < \infty \quad \forall r > 0.
\end{equation}

Thus $V_s(u)$ consists of the directions in $\rr d \setminus 0$ in which $\wh u( x)$ does not decay like $e^{- r |x|^{\frac1s}}$ for all $r > 0$. 
Note that we assume $s > 1$ in the following result. This depends on the fact that we need a window function with certain properties. 

\begin{prop}\label{prop:diracseries2}
Let $s > 1$ and $t > 0$. 
Suppose that \eqref{eq:seriescondition1} holds for all $r > 0$
and define $u \in (\Sigma_t^s)'(\rr d)$ and $f \in (\Sigma_s^t)'(\rr d)$ by \eqref{eq:diracseries1} and \eqref{eq:maclaurinseries1} respectively. 
Then 
\begin{equation}\label{eq:diracseries3}
\WF^{t,s}( u ) 
= \{ 0 \} \times V_s(u)
\end{equation}
and
\begin{equation}\label{eq:taylorseries3}
\WF^{s,t}( f )
= V_s(u) \times \{ 0 \}. 
\end{equation}
\end{prop}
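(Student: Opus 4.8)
The plan is to establish \eqref{eq:diracseries3} and then to deduce \eqref{eq:taylorseries3} from it by Fourier transformation. Since $\widehat u = (2\pi)^{-\frac{d}{2}} f$ and the wave front set is unaffected by a nonzero scalar factor, Proposition \ref{prop:WFstsymplectic}~(i) gives $\WF^{s,t}(f) = \WF^{s,t}(\widehat u) = \J \WF^{t,s}(u)$. Because $\J(0,\eta) = (\eta,0)$ we have $\J(\{0\} \times V_s(u)) = V_s(u) \times \{0\}$, so \eqref{eq:taylorseries3} is an immediate consequence of \eqref{eq:diracseries3}. It therefore suffices to prove \eqref{eq:diracseries3}.

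Since $s > 1$ and \eqref{eq:seriescondition1} holds for all $r > 0$, the ultradistribution $u$ lies in $\cE_s'(\rr d)$ and is supported at the origin, cf. \cite[Example~1.5.3 and 1.6.5]{Rodino1}. Hence $\WF_s(u) \subseteq \{0\} \times (\rr d \setminus 0)$, and together with the identity $\pi_2 \WF_s(u) = V_s(u)$, valid for compactly supported $u$ and recorded in the proof of Proposition \ref{prop:WFsfreqaxis}, this yields $\WF_s(u) = \{0\} \times V_s(u)$; in particular, for $\xi_0 \in \rr d \setminus 0$ the condition $(0,\xi_0) \notin \WF_s(u)$ is equivalent to $\xi_0 \notin V_s(u)$. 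The upper inclusion $\WF^{t,s}(u) \subseteq \{0\} \times V_s(u)$ is then immediate from Proposition \ref{prop:WFsfreqaxis}, which applies for every $t > 0$.

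For the reverse inclusion $\{0\} \times V_s(u) \subseteq \WF^{t,s}(u)$ I would argue by contraposition: fix $\xi_0 \in \rr d \setminus 0$ with $(0,\xi_0) \notin \WF^{t,s}(u)$ and show $(0,\xi_0) \notin \WF_s(u)$, i.e. $\xi_0 \notin V_s(u)$. When $t \geqs s$ this is exactly Proposition \ref{prop:WFGevreyWFst}, and indeed in that case the full equality \eqref{eq:diracseries3} already follows from Corollary \ref{cor:WFsfreqaxis}. The only genuine obstacle is the range $t < s$, not covered by those statements. The point that removes the restriction is that $u$ is supported at the origin, so only the base point $x_0 = 0$ is relevant. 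Inspecting the proof of Proposition \ref{prop:WFGevreyWFst}, the hypothesis $t \geqs s$ is used solely in the estimate $|\eta|^{-\frac{2t}{s}} \leqs |\eta|^{-2}$ needed to place $\{x_0\} \times \Gamma_\delta$ inside the anisotropic cone $\Gamma$ when $x_0 \neq 0$; for $x_0 = 0$ this step is vacuous, since the spatial component $x = \lambda^{-t} x_0$ vanishes identically and the membership $(x,\xi) \in \rB_\ep$ collapses to $|\xi| < \delta \leqs \ep$. Thus the argument of Proposition \ref{prop:WFGevreyWFst}, specialized to $x_0 = 0$, proves $(0,\xi_0) \notin \WF_s(u)$ for arbitrary $t > 0$, which gives the lower inclusion and hence \eqref{eq:diracseries3}.

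I expect the main point requiring care to be the verification that the $x_0 = 0$ specialization of Proposition \ref{prop:WFGevreyWFst} is independent of the relation between $t$ and $s$; once this is checked, everything else is a direct appeal to the results of Section \ref{sec:GSGevrey} together with the Fourier invariance of the anisotropic wave front set.
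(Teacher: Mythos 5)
Your proof is correct, and its skeleton --- reducing \eqref{eq:taylorseries3} to \eqref{eq:diracseries3} via Proposition \ref{prop:WFstsymplectic}~(i), and obtaining $\WF^{t,s}(u) \subseteq \{0\} \times V_s(u)$ from Proposition \ref{prop:WFsfreqaxis} together with $\pi_2\WF_s(u) = V_s(u)$ --- coincides with the paper's. Where you genuinely diverge is the lower inclusion. The paper does not revisit Proposition \ref{prop:WFGevreyWFst} at all: it chooses a window $\fy \in \Sigma_t^s(\rr d)$ that is \emph{flat} at the origin, i.e.\ $\fy(0)=1$ and $\partial^\alpha \fy(0)=0$ for all $\alpha \neq 0$ (possible precisely because $s>1$), so that the Leibniz sum collapses and $V_\fy u(0,\xi) = (2\pi)^{-d/2}\sum_\alpha c_\alpha \xi^\alpha = (2\pi)^{-d/2} f(\xi)$ identically; the hypothesis $(0,\xi_0)\notin\WF^{t,s}(u)$, read at $x=0$, then \emph{is} condition \eqref{eq:Scomplement}, giving $\xi_0\notin V_s(u)$ with no detour through the Gevrey wave front set. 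Your route instead re-runs the proof of Proposition \ref{prop:WFGevreyWFst} at the base point $x_0=0$, and your diagnosis is accurate: the hypothesis $t\geqs s$ enters that proof only through the bound $\lambda^{-2t}|x_0|^2 = |\eta|^{-2t/s}|x_0|^2 \leqs |\eta|^{-2}|x_0|^2$, which is vacuous when $x_0=0$; combined with $\supp u = \{0\}$, whence $\WF_s(u) = \{0\}\times V_s(u)$, this closes the argument. Your version buys a slightly more structural intermediate fact --- for any $u\in\cE_s'(\rr d)$ and any $t>0$, $(0,\xi_0)\notin\WF^{t,s}(u)$ forces $(0,\xi_0)\notin\WF_s(u)$ --- at the cost of asking the reader to re-inspect an earlier proof, while the paper's flat-window computation is shorter and self-contained. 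Note that both arguments use $s>1$ in an essential way, to produce a compactly supported (respectively flat) Gevrey window.
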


\begin{proof}
Again Fourier transformation gives $\widehat u = (2 \pi)^{-\frac{d}{2}} f$ so again by Proposition \ref{prop:WFstsymplectic} (i) it suffices to show \eqref{eq:diracseries3}. 

As for the inclusion
\begin{equation*}
\WF^{t,s}( u )  \subseteq \{ 0 \} \times V_s(u)
\end{equation*}
it is a direct consequence of Proposition \ref{prop:WFsfreqaxis} since $V_s(u) = \pi_2 \WF_s(u)$.  

The opposite inclusion cannot be deduced from Proposition \ref{prop:WFGevreyWFst} and Corollary \ref{cor:WFsfreqaxis}, 
because of the restrictive assumption $t \geqs s$ there. 
Instead we argue as follows. 
We know from Proposition \ref{prop:diracseries1} that 
$\WF^{t,s}( u ) \subseteq \{ 0 \} \times ( \rr d \setminus 0 )$. 
Assume $\xi_0 \in \rr d \setminus 0$ and $(0,\xi_0) \notin \WF^{t,s}( u )$. 
Let $\fy \in \Sigma_t^s (\rr d)$ satisfy $\fy(0) = 1$ and $\pd \alpha \fy(0) = 0$ for all $\alpha \neq 0$, 
which is possible since $s > 1$. 
If we fix $x = 0$ in \eqref{eq:notinWFGFst1} and assume there $U = A \times B \subseteq \rr {2d}$ where $A \subseteq \rr d$ is a neighborhood of $0$ and $B \subseteq \rr d$ is a neighborhood of $\xi_0$, we obtain
\begin{equation*}
\sup_{\lambda > 0, \ \xi \in B}
e^{r \lambda} |V_\fy u(0, \lambda^s \xi)| < + \infty \quad \forall r > 0. 
\end{equation*}

Since 
\begin{align*}
V_\fy u (0,\xi) 
& = (2 \pi)^{- \frac{d}{2}} \sum_{\alpha \in \nn d} c_\alpha \sum_{\beta \leqs \alpha} \binom{\alpha}{\beta} \xi^\beta \overline{ D^{\alpha-\beta} \fy (0)} \\
& = (2 \pi)^{- \frac{d}{2}} \sum_{\alpha \in \nn d} c_\alpha \xi^\alpha = (2 \pi)^{- \frac{d}{2}} f(\xi),
\end{align*}
\eqref{eq:Scomplement} is satisfied with $U = B$
and we conclude $\xi_0 \notin V_s(u)$. 
Thus $\{ 0 \} \times V_s(u) \subseteq \WF^{t,s}( u )$. 
\end{proof}

\section{The $t,s$-Gelfand--Shilov wave front set of an exponential function}\label{sec:exponential}

For $z \in \cc d$ fixed consider the exponential function $\rr d \ni x \mapsto a(x) = e^{\la x, z \ra}$. 
If $s > 0$, $0 < t \leqs 1$, $s + t > 1$ and $\fy \in \Sigma_t^s(\rr d)$ then by \eqref{eq:Sstderivativeestimate1}
we have for some $h > 0$
\begin{equation*}
\left| \int_{\rr d} a(x) \overline{ \fy (x)} \dd x \right|
\leqs \| \fy \|_{\mathcal S_{t,h}^s} \int_{\rr d} e^{|z| |x| - (|z|+1) |x|^{\frac{1}{t}}} \dd x
\lesssim \| \fy \|_{\mathcal S_{t,h}^s} 
\end{equation*}
which implies $a \in (\Sigma_t^s)'(\rr d)$. 
We consider $a$ as the multiplier operator $T f = a f$. 
Then $T = a^w(x,D)$ with $a(x,\xi) = a(x) = e^{\la x, z \ra}$. 
From \eqref{eq:expestimate0} for any $h > 0$
we obtain for any $\alpha \in \nn d$
\begin{equation*}
|\pd \alpha a(x)| = |z^\alpha | e^{\re \la x,z \ra}
\leqs C_{s,d,h} ( h |z|)^{|\alpha|} \alpha!^s e^{|\re z| |x|}. 
\end{equation*}
This means that $a \in \Gamma_{t,s}^{s,t; 0} (\rr {2d})$ for all $0 < t \leqs 1$, $s > 0$, $s + t > 1$. 

Theorem \ref{thm:microlocalWFs} 
combined with Proposition \ref{prop:WFstelementary}
now gives 
\begin{equation}\label{eq:exponentialWF1}
\WF^{t,s}(  e^{\la \cdot , z \ra} ) \subseteq ( \rr d \setminus 0 ) \times \{ 0 \} 
\end{equation}
for any $z \in \cc d$.

By considering the operator 
$T^{-1}$ with symbol $e^{-\la x, z \ra} \in \Gamma_{t,s}^{s,t; 0} (\rr {2d})$, 
so that 
$T^{-1} ( e^{ \la \cdot, z \ra} ) = 1$, 
we deduce the opposite inclusion. 
We have obtained: 

\begin{prop}\label{prop:exponentialWFs}
If $0 < t \leqs 1$, $s > 0$, $s + t > 1$, and $z \in \cc d$ then
\begin{equation*}
\WF^{t,s} ( e^{\la \cdot ,z \ra} ) = ( \rr d \setminus 0 )  \times \{ 0 \}. 
\end{equation*}
\end{prop}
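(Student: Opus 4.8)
The plan is to obtain both inclusions from the microlocality result Theorem~\ref{thm:microlocalWFs}, by realizing multiplication by $e^{\la \cdot, z \ra}$ as a Weyl operator whose symbol belongs to $\Gamma_{t,s}^{s,t; 0}(\rr {2d})$, and by using the elementary value $\WF^{t,s}(1) = (\rr d \setminus 0) \times \{ 0 \}$ supplied by Proposition~\ref{prop:WFstelementary}.

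First I would confirm the symbol membership, which is the only computation involved. The symbol $a(x,\xi) = e^{\la x, z \ra}$ does not depend on $\xi$, so $\partial_\xi^\beta a$ vanishes for $\beta \neq 0$, while $\partial_x^\alpha a(x,\xi) = z^\alpha e^{\la x, z \ra}$. Bounding $|z^\alpha|$ by \eqref{eq:expestimate0} gives, for every $h > 0$,
\begin{equation*}
|\partial_x^\alpha \partial_\xi^\beta a(x,\xi)| \lesssim (h|z|)^{|\alpha|} \, \alpha!^s \, \beta!^t \, e^{|\re z| |x|}, \quad \alpha,\beta \in \nn d,
\end{equation*}
and since $0 < t \leqs 1$ forces $1/t \geqs 1$, we have $|x| \leqs 1 + |x|^{1/t}$, so that $e^{|\re z||x|} \lesssim e^{\mu |x|^{1/t}}$ with $\mu = |\re z|$. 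This is exactly \eqref{eq:symbolvillkor2}, whence $a \in \Gamma_{t,s}^{s,t; 0}(\rr {2d})$; the symbol $b(x,\xi) = e^{-\la x, z \ra}$ of the inverse multiplier lies in the same class by an identical estimate.

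Next, for the inclusion ``$\subseteq$'' I would write $e^{\la \cdot, z \ra} = a^w(x,D) 1$, noting that for an $\xi$-independent symbol the Weyl operator is just multiplication, and that $1 = x^0 \in (\Sigma_t^s)'(\rr d)$. Proposition~\ref{prop:WFstelementary} (ii) with $\alpha = 0$ gives $\WF^{t,s}(1) = (\rr d \setminus 0) \times \{ 0 \}$, and Theorem~\ref{thm:microlocalWFs} then yields
\begin{equation*}
\WF^{t,s}(e^{\la \cdot, z \ra}) = \WF^{t,s}(a^w(x,D) 1) \subseteq \WF^{t,s}(1) = (\rr d \setminus 0) \times \{ 0 \},
\end{equation*}
which is \eqref{eq:exponentialWF1}.

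For the reverse inclusion I would apply the same theorem to $b^w(x,D)$, using $b^w(x,D) e^{\la \cdot, z \ra} = e^{-\la \cdot, z \ra} e^{\la \cdot, z \ra} = 1$, to get
\begin{equation*}
(\rr d \setminus 0) \times \{ 0 \} = \WF^{t,s}(1) = \WF^{t,s}(b^w(x,D) e^{\la \cdot, z \ra}) \subseteq \WF^{t,s}(e^{\la \cdot, z \ra}),
\end{equation*}
and combining the two inclusions gives the asserted equality. I do not expect a genuine obstacle: the substance of the argument is carried by Theorem~\ref{thm:microlocalWFs}, and the only thing to verify by hand is the symbol estimate above, which is routine given \eqref{eq:expestimate0}.
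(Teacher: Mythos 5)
Your proposal is correct and follows essentially the same route as the paper: identify $e^{\la \cdot, z\ra}$ as the Weyl multiplier $a^w(x,D)$ with $a(x,\xi)=e^{\la x,z\ra}\in\Gamma_{t,s}^{s,t;0}(\rr{2d})$ (using \eqref{eq:expestimate0} and $0<t\leqs 1$), apply Theorem~\ref{thm:microlocalWFs} together with Proposition~\ref{prop:WFstelementary} for one inclusion, and invert with the symbol $e^{-\la x,z\ra}$ for the other. Your write-up is in fact slightly more explicit than the paper's about the step $e^{|\re z||x|}\lesssim e^{\mu|x|^{1/t}}$ and about applying the operator to the constant $1$.
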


\begin{cor}\label{cor:diracseriesexp}
If $0 < t \leqs 1$, $s > 0$, $s + t > 1$, 
$z \in \cc d$ and
\begin{equation*}
u = \sum_{\alpha \in \nn d} \frac{z^\alpha}{\alpha!} (-D)^\alpha \delta_0 
\end{equation*}
then $u \in (\Sigma_s^t)'(\rr d)$ and 
\begin{equation*}
\WF^{s,t} ( u ) =  \{ 0 \} \times ( \rr d \setminus 0 ) . 
\end{equation*}
\end{cor}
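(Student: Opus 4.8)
The plan is to reduce the statement to the exponential case already settled in Proposition~\ref{prop:exponentialWFs} via the Fourier transform, rather than attempting to invoke Proposition~\ref{prop:diracseries1}. Here $u = \sum_\alpha c_\alpha D^\alpha \delta_0$ with $c_\alpha = (-z)^\alpha/\alpha!$, so $|c_\alpha| = |z^\alpha|/\alpha!$, and the sum in \eqref{eq:seriescondition1} with the exponent $t$ appropriate to the target space $(\Sigma_s^t)'(\rr d)$ equals $\sum_\alpha |z^\alpha| r^{|\alpha|} \alpha!^{t-1}$. Since $0 < t \leqs 1$ we have $\alpha!^{t-1} \leqs 1$, so this series is dominated by $\sum_\alpha |z^\alpha| r^{|\alpha|} = \prod_{j} (1 - r|z_j|)^{-1}$, which converges for all sufficiently small $r > 0$ but diverges once $r$ is large. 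Thus \eqref{eq:seriescondition1} holds for some $r > 0$ but not for all $r > 0$; consequently Proposition~\ref{prop:diracseries1} does not apply, while the convergence discussion preceding it (with the roles of $s$ and $t$ interchanged) already delivers $u \in (\Sigma_s^t)'(\rr d)$.

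Next I would compute the Fourier transform of $u$. As the series converges in $(\Sigma_s^t)'(\rr d)$ and $\cF$ acts continuously, I may transform term by term: from $\wh{D^\alpha \delta_0}(\xi) = (2\pi)^{-\frac d2}\xi^\alpha$ one obtains $\wh{(-D)^\alpha \delta_0}(\xi) = (2\pi)^{-\frac d2}(-\xi)^\alpha$, so that
\begin{equation*}
\wh u(\xi) = (2\pi)^{-\frac d2} \sum_{\alpha \in \nn d} \frac{z^\alpha (-\xi)^\alpha}{\alpha!} = (2\pi)^{-\frac d2}\, e^{-\la \xi, z \ra}, \quad \xi \in \rr d,
\end{equation*}
the last step being the (factorwise) Maclaurin expansion of the exponential. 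Hence $\wh u$ is, up to a nonzero constant, the exponential $e^{\la \cdot, -z \ra}$, to which Proposition~\ref{prop:exponentialWFs} applies with $-z$ in place of $z$; this yields $\WF^{t,s}(\wh u) = (\rr d \setminus 0) \times \{ 0 \}$.

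Finally I would transport this information back to $u$ using the Fourier covariance of the anisotropic wave front set. Applying Proposition~\ref{prop:WFstsymplectic}~(i) with the indices $s$ and $t$ interchanged (legitimate precisely because $u \in (\Sigma_s^t)'(\rr d)$) gives $\WF^{t,s}(\wh u) = \J\, \WF^{s,t}(u)$, whence $\WF^{s,t}(u) = \J^{-1}\big[(\rr d \setminus 0) \times \{ 0 \}\big]$. Since $\J^{-1}(x,0) = (0,x)$, this image is exactly $\{ 0 \} \times (\rr d \setminus 0)$, as claimed; note that the equality (not merely an inclusion) comes for free from the equality in Proposition~\ref{prop:exponentialWFs}. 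The one genuinely delicate point is the first paragraph: recognising that the ostensibly natural tool, Proposition~\ref{prop:diracseries1}, is \emph{not} available because \eqref{eq:seriescondition1} fails for large $r$, and that the special algebraic structure of $\wh u$ must instead be combined with the microlocal and metaplectic machinery. Once this is seen, every remaining step is routine bookkeeping.
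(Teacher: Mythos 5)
Your main argument is correct and is essentially the paper's own proof run in the opposite direction: the paper starts from $f(x)=e^{\la x,z\ra}$, Fourier transforms the Maclaurin series termwise to identify $\wh f = (2\pi)^{\frac d2}u$, and then applies Proposition~\ref{prop:exponentialWFs} together with Proposition~\ref{prop:WFstsymplectic}~(i); you start from $u$, compute $\wh u$ termwise to recover the exponential, and apply the same two results. The computation of $\wh u$, the membership $u\in(\Sigma_s^t)'(\rr d)$, and the transport of the wave front set via $\J$ are all sound.

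One side remark in your first paragraph is wrong, however. From the fact that $\sum_\alpha |z^\alpha| r^{|\alpha|}\alpha!^{t-1}$ is \emph{dominated} by the geometric series $\sum_\alpha |z^\alpha| r^{|\alpha|}$ you cannot conclude that it diverges when the geometric series does. In fact for $0<t<1$ the factor $\alpha!^{t-1}$ decays super-geometrically and the series converges for \emph{every} $r>0$ (this is exactly the computation in Remark~\ref{rem:exponential}, with the roles of $s$ and $t$ interchanged), so condition \eqref{eq:seriescondition1} does hold for all $r>0$ and Proposition~\ref{prop:diracseries1} does apply in that range --- though it would only yield the inclusion $\WF^{s,t}(u)\subseteq\{0\}\times(\rr d\setminus 0)$, not the equality. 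Your claim is accurate only in the borderline case $t=1$. Since the only conclusion you actually extract from that paragraph is $u\in(\Sigma_s^t)'(\rr d)$, which needs \eqref{eq:seriescondition1} for just one $r>0$, the error does not affect the validity of the proof, but the assertion as stated contradicts the paper and should be corrected or removed.
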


\begin{proof}
We have the Maclaurin series 
\begin{equation}\label{eq:maclaurinseries0}
f(x) = e^{\la x ,z \ra} = \sum_{\alpha \in \nn d} \frac{z^\alpha}{\alpha!} x^\alpha, \quad x \in \rr d, 
\end{equation}
which converges in $(\Sigma_t^s)'(\rr d)$ to $f \in (\Sigma_t^s)'(\rr d)$. 
We apply the Fourier transform termwise with convergence in $(\Sigma_s^t)'(\rr d)$ which gives
\begin{align*}
\wh f & = \sum_{\alpha \in \nn d} \frac{z^\alpha}{\alpha!} \cF (x^\alpha) \\
& = (2 \pi)^{\frac{d}{2}} \sum_{\alpha \in \nn d} \frac{z^\alpha}{\alpha!} (-D)^\alpha \delta_0 
= (2 \pi)^{\frac{d}{2}} u \in (\Sigma_s^t)'(\rr d). 
\end{align*}

Proposition \ref{prop:WFstsymplectic} (i)
and Proposition \ref{prop:exponentialWFs} 
now give
\begin{equation*}
\WF^{s,t}(u) = \J \WF^{t,s}(f) = \{ 0 \} \times ( \rr d \setminus 0 ) . 
\end{equation*}
\end{proof}

\begin{rem}\label{rem:fourierexponential}
Note that $u$ may be considered as the Dirac distribution
\begin{equation*}
(u,\fy) = \overline{\fy( i \overline z)}, \quad \fy \in \Sigma_s^t (\rr d), 
\end{equation*}
which makes sense since $\fy$ extends to an entire function on $\cc d$ as $t \leqs 1$. 
If $z = i \xi$ with $\xi \in \rr d$ we recapture the well known identity
\begin{equation*}
\cF (  e^{i \la \cdot, \xi \ra} ) = \wh f = (2 \pi)^{\frac{d}{2}} u = (2 \pi)^{\frac{d}{2}} \delta_{\xi} \in \cD'(\rr d). 
\end{equation*}
\end{rem}

\begin{rem}\label{rem:exponential}
Proposition \ref{prop:diracseries1} contains the ``$\subseteq$'' inclusion of 
Proposition \ref{prop:exponentialWFs} and Corollary \ref{cor:diracseriesexp}, under the restriction $0 < t < 1$ (that is avoiding $t=1$), as a particular case. 
In fact comparing \eqref{eq:maclaurinseries0} with \eqref{eq:maclaurinseries1} we can identify the Maclaurin coefficients for $f = e^{\la \cdot, z \ra}$ where $z \in \cc d$. They are $c_\alpha = z^\alpha/\alpha!$. 
If $0 < s < 1$ we have for any $r > 0$, and $0 < a < 1$
\begin{align*}
\sum_{\alpha \in \nn d} |c_\alpha| \, r^{|\alpha|} \alpha!^{s}
& \leqs \sum_{\alpha \in \nn d} (|z| r)^{|\alpha|} \alpha!^{s-1}
= \sum_{\alpha \in \nn d} a^{|\alpha|} \left( \frac{(|z| r a^{-1})^{\frac{|\alpha|}{1-s}}}{\alpha!} \right)^{1-s} \\
& \leqs \sum_{\alpha \in \nn d} a^{|\alpha|} 
\left( \frac{ \left( d (|z| r a^{-1})^{\frac{1}{1-s}} \right)^{|\alpha|} }{|\alpha|!} \right)^{1-s} \\
& \leqs (1 -a )^{-d} \exp \left( (1-s) d (|z| r a^{-1})^{\frac{1}{1-s}} \right). 
\end{align*}
By Proposition \ref{prop:diracseries1} we may conclude
\begin{equation*}
\WF^{s,t}( e^{\la \cdot, z \ra} ) 
\subseteq ( \rr d \setminus 0 ) \times \{ 0 \}
\end{equation*}
and
\begin{equation*}
\WF^{t,s}( u ) 
\subseteq \{ 0 \} \times ( \rr d \setminus 0 )
\end{equation*}
where 
\begin{equation*}
u = \sum_{\alpha \in \nn d} \frac{z^\alpha}{\alpha!} (- D)^\alpha \delta_0. 
\end{equation*}
\end{rem}

By combining with the results of Section \ref{sec:chirp} we finally consider in dimension $d = 1$
\begin{equation}\label{eq:expchirp}
v (x)  = e^{ z x + i c x^{m} }
\end{equation}
with $z \in \co$, $c \in \ro \setminus 0$, $m \in \no$, $m \geqs 2$. 
Then $v \in (\Sigma_t^s)'(\rr d)$ if $0 < t \leqs 1$, $s > 0$, $s + t > 1$.

\begin{prop}\label{prop:expchirp}
If $\frac1{m-1} < t \leqs 1$ then for $v$ defined by \eqref{eq:expchirp} we have 
\begin{equation}\label{eq:WFexpchirp}
\WF^{t,t(m-1)} (v) = \{ (x, c m x^{m-1}) \in \rr 2, \ x \neq 0  \}. 
\end{equation}

\end{prop}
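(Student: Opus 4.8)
The plan is to factor $v$ as a multiplier applied to the pure chirp $u(x)=e^{icx^m}$ of \eqref{eq:chirpdef1}, whose anisotropic wave front set is already known from Theorem~\ref{thm:chirpWFst}~(i), and then to transport that wave front set through the multiplier by means of the microlocality Theorem~\ref{thm:microlocalWFs}. Concretely, with $s=t(m-1)$ we have $v=e^{z\,\cdot}\,u$, so $v=a^w(x,D)u$ for the Weyl symbol $a(x,\xi)=e^{zx}$, which does not depend on $\xi$. Since $e^{zx}$ is invertible with inverse $e^{-zx}$, and both symbols lie in the class $\Gamma_{t,s}^{s,t;0}(\rr2)$, microlocality will apply in both directions and yield the desired equality rather than a mere inclusion.

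First I would record that the parameters are admissible. From $t>\tfrac1{m-1}$ we get $s=t(m-1)>1$, whence $s+t>1$, so $\WF^{t,s}$ is defined on $(\Sigma_t^s)'(\ro)$ and, as observed preceding Proposition~\ref{prop:exponentialWFs}, the hypotheses $0<t\leqs1$, $s>0$, $s+t>1$ guarantee $v\in(\Sigma_t^s)'(\ro)$. Next I would invoke Theorem~\ref{thm:chirpWFst}~(i), valid precisely under $t>\tfrac1{m-1}$, to obtain
\[
\WF^{t,t(m-1)}(u)=\{(x,cmx^{m-1})\in\rr2:\ x\neq0\}.
\]
The heart of the argument is then the symbol membership: the computation carried out just before Proposition~\ref{prop:exponentialWFs}, based on \eqref{eq:expestimate0} and on $1/t\geqs1$ (so that $e^{|\re z||x|}\lesssim e^{\mu|x|^{1/t}}$ for a suitable $\mu>0$), shows $a(x,\xi)=e^{zx}\in\Gamma_{t,s}^{s,t;0}(\rr2)$; the same estimate with $z$ replaced by $-z$ gives $e^{-zx}\in\Gamma_{t,s}^{s,t;0}(\rr2)$.

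Applying Theorem~\ref{thm:microlocalWFs} to $v=a^w(x,D)u$ yields $\WF^{t,s}(v)\subseteq\WF^{t,s}(u)$, while applying it to $u=e^{-z\,\cdot}v=b^w(x,D)v$ with $b(x,\xi)=e^{-zx}$ yields the reverse inclusion $\WF^{t,s}(u)\subseteq\WF^{t,s}(v)$. Combining the two gives $\WF^{t,s}(v)=\WF^{t,s}(u)$, which is \eqref{eq:WFexpchirp}. The only point requiring genuine care is the two-sided symbol membership, together with the observation that the restriction $0<t\leqs1$ is exactly what makes $e^{\pm zx}$ admissible symbols; once this is in place the result is immediate, and no direct estimation of $V_\psi v$ is needed. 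I expect no serious obstacle beyond this, since the hypothesis $t\leqs1$ places us precisely in the regime where the invertible multipliers $e^{\pm zx}$ belong to the class.
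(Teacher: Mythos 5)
Your proposal is correct and follows essentially the same route as the paper: write $v = a^w(x,D)u$ with $a(x,\xi)=e^{zx}$, verify $e^{\pm zx}\in\Gamma_{t,s}^{s,t;0}(\rr 2)$ using the computation preceding Proposition~\ref{prop:exponentialWFs}, apply Theorem~\ref{thm:microlocalWFs} in both directions to get $\WF^{t,s}(v)=\WF^{t,s}(u)$, and conclude with Theorem~\ref{thm:chirpWFst}~(i). Your explicit checks of the parameter admissibility and of the two-sided symbol membership are exactly the points the paper relies on.
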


\begin{proof}
As before define $T = a^w(x,D)$ with $a(x,\xi) = e^{zx}$ regarded as a symbol in
$\Gamma_{t,s}^{s,t;0}(\rr 2)$, for any $s > 0$ such that $s + t > 1$ and $t \leqs 1$.
Set 
\begin{equation*}
w (x)  = e^{i c x^{m} } \in \cS'(\ro) \subseteq (\Sigma_t^s)'(\ro). 
\end{equation*}
We have $v = T w$. 
From Theorem \ref{thm:microlocalWFs} we deduce 
\begin{equation*}
\WF^{t,s}( v ) \subseteq \WF^{t,s}(w). 
\end{equation*}
By considering the operator $T^{-1}$ we deduce the opposite inclusion, 
hence $\WF^{t,s}( v ) = \WF^{t,s}(w)$. 
Under the assumption $t > \frac1{m-1}$ we may apply 
Theorem \ref{thm:chirpWFst}, and obtain \eqref{eq:WFexpchirp}. 
\end{proof}

\section*{Acknowledgment}
Work partially supported by the MIUR project ``Dipartimenti di Eccellenza 2018-2022'' (CUP E11G18000350001).


\end{document}